
\documentclass[11pt,a4paper]{amsart}
\usepackage{amsmath, amscd, amssymb, amsfonts,amsthm,enumerate}

\usepackage[utf8]{inputenc}

\usepackage{amssymb,color}
\usepackage{amsfonts}
\usepackage{amsmath}
\usepackage{euscript}
\usepackage{enumerate}
\usepackage{graphics}
\usepackage{graphicx}
\usepackage[all,cmtip]{xy}
\usepackage{tikz}
\usetikzlibrary{arrows}

\usepackage{hyperref}
\usepackage{pdfsync}
\synctex=1

\newtheorem{theorem}{Theorem}[section]
\newtheorem{proposition}[theorem]{Proposition}
\newtheorem{corollary}[theorem]{Corollary}
\newtheorem{lemma}[theorem]{Lemma}

\theoremstyle{definition}
\newtheorem{definition}[theorem]{Definition}

\newtheorem{conj}[theorem]{Conjecture}
\numberwithin{equation}{section}

\begin{document}

\newcommand{\R}{\mathds{R}}
\newcommand{\Q}{\mathds{Q}}
\newcommand{\cok}{Coker}
\newcommand{\Rt}{\mbox{\newmatha R}}
\newcommand{\Rs}{\mbox{\newmathb R}}
\newcommand{\N}{\mathbb{N}}
\newcommand{\Z}{\mathbb{Z}}
\newcommand{\C}{\mathbb{C}}
\newcommand{\E}{\mathcal{E}}
\newcommand{\G}{\mathcal{G}}
\newcommand{\g}{\mathfrak{g}}
\newcommand{\s}{\mathfrak{sl}}
\newcommand{\A}{\mathcal{A}}
\newcommand{\F}{\mathbb{F}}
\renewcommand{\sl}{\mathfrak{sl}}
\newcommand{\gl}{\mathfrak{gl}}
\newcommand{\cad}{\ar@{}[dr]|{\circlearrowleft}}
\newcommand{\caad}{\ar@{}[ddr]|{\circlearrowleft}}
\newcommand{\cadd}{\ar@{}[drr]|{\circlearrowleft}}
\newcommand{\cai}{\ar@{}[dl]|{\circlearrowleft}}
\newcommand{\verteq}{\rotatebox{90}{$\,=$}}



\title[Uniserial representations of the Lie algebra
$\sl(2)\ltimes \mathfrak{h}_n$]{Tensor products and intertwining operators between 
two uniserial representations of the Galilean Lie algebra
$\sl(2)\ltimes \mathfrak{h}_n$}

\author{Leandro Cagliero}
\address{FaMAF-CIEM (CONICET), Universidad Nacional de C\'ordoba,
Medina Allende s/n, Ciudad Universitaria, 5000 C\'ordoba, Rep\'ublica Argentina.}
\email{cagliero@famaf.unc.edu.ar}

\author{Iv\'an G\'omez Rivera}
\address{FaMAF-CIEM (CONICET), Universidad Nacional de C\'ordoba,
Medina Allende s/n, Ciudad Universitaria, 5000 C\'ordoba, Rep\'ublica Argentina.}
\email{ivan.gomez.rivera@mi.unc.edu.ar}

\thanks{This research was partially supported by a CONICET grant
PIP 11220210100597CO, SeCyT-UNC grant 33620180100983CB}

\subjclass[2010]{17B10, 18M20, 22E27}

\keywords{non-semisimple Lie algebras, uniserial representations, tensor product, socle, radical, intertwining operators}

\begin{abstract} 
Let $\mathfrak{sl}(2)\ltimes \mathfrak{h}_n$, $n\ge 1$, 
be the Galilean Lie algebra over a field of characteristic zero, 
here $\mathfrak{h}_{n}$ is the Heisenberg Lie algebra of dimension $2n+1$, and $\mathfrak{sl}(2)$ acts on $\mathfrak{h}_{n}$ so that 
$\mathfrak{h}_n\simeq V(2n-1)\oplus V(0)$ as $\mathfrak{sl}(2)$-modules
(here $V(k)$ denotes the irreducible $\mathfrak{sl}(2)$-module of highest weight $k$). In this paper, we study the tensor product of two uniserial representations of 
 $\mathfrak{sl}(2)\ltimes \mathfrak{h}_n$.

We obtain the $\mathfrak{sl}(2)$-module structure of the 
 socle of $V\otimes W$ 
 and we describe the space of intertwining operators 
 $\text{Hom}_{\mathfrak{sl}(2)\ltimes \mathfrak{h}_n}(V,W)$,
 where $V$ and $W$ are uniserial representations of 
 $\mathfrak{sl}(2)\ltimes \mathfrak{h}_n$.
 The structure of the radical of $V\otimes W$
 follows from that of the socle of $V^*\otimes W^*$.

 The result is subtle and shows how difficult is to obtain 
 the whole socle series of arbitrary tensor products of uniserials. In contrast to the serial associative case, our results for 
 $\mathfrak{sl}(2)\ltimes \mathfrak{h}_n$
 reveal that these tensor products are far from being a direct sum of uniserials; in particular, there are cases in which the tensor product of 
two uniserial $\big(\mathfrak{sl}(2)\ltimes \mathfrak{h}_n\big)$-modules is indecomposable but not uniserial. 
Recall that a foundational result of T. Nakayama states that every finitely generated module over a serial associative algebra is a direct sum of uniserial modules. 

This article extends a previous work in which we obtained 
the corresponding results for the Lie algebra $\mathfrak{sl}(2)\ltimes \mathfrak{a}_m$ where  $\mathfrak{a}_m$ is the abelian Lie algebra 
of dimension $m+1$ and $\mathfrak{sl}(2)$ acts so that 
 $\mathfrak{a}_m\simeq V(m)$ as $\mathfrak{sl}(2)$-modules.
\end{abstract}

\maketitle

\section{Introduction}\label{sec:intro}

This article is part of a project whose general goal 
is to understand to what extent there is a 
category consisting of certain finite-dimensional representations of
(non-semisimple) Lie groups such that, on the one hand, 
it includes many (most) representations that are relevant to problems of interest and, on the other hand, 
is small enough
so that its members can be described and handled in a reasonable simple way. 
Clearly, for semisimple Lie groups, the whole category 
of finite-dimensional representations works thanks to 
the highest weight theorem and Weyl's theorem on complete reducibility. 
In the non-semisimple case, the whole category 
of finite-dimensional representations is wild even for abelian Lie algebras of dimension greater than or equal to $2$ (see \cite{GP}). 
This is also discussed in \cite{Sa} 
for the 3-dimensional euclidean Lie
algebra $\mathfrak{e}(2)$, and in \cite{M} for virtually any complex Lie algebra other than semisimple
or 1-dimensional.

Instead of considering the whole category 
of finite-dimensional representations, we and other authors have been working on the idea of describing or classifying
a special class of indecomposable representations of (non-semisimple) Lie algebras whose members might be used as building blocks for describing more general representations. 
For instance, A. Piard \cite{Pi1} analyzed thoroughly the indecomposable modules $U$,
of the complex Lie algebra $\mathfrak{sl}(2)\ltimes \mathbb{C}^2$,
 such that $U/\text{rad}(U)$ is irreducible. 
 More recently, various families of indecomposable modules over various types of non-semisimple Lie algebras have been constructed and/or 
 classified, see for instance \cite{Ca2, Ca1,CMS, CM, DP, Dd, DKR,J}.
Among these attempts, the class of uniserial representations stands out. 
In the articles \cite{CS_JofAlg, CS_canadian, CS_JofAlgApp, CS_Comm, CGS1, CGS2, CLS,Casati2017IndecomposableMO}
 all finite-dimensional 
uniserial representations have been classified 
for some different families of Lie algebras $\g$. 
These classifications show that the class of uniserial representations of $\g$
is rather small and treatable in the universe of all 
indecomposable modules. 
Also, in \cite{Finis2014}, it is shown how the infinite dimensional uniserial representations of 
certain special linear groups obtained in 
\cite{SinThompson2014}
appear naturally in cohomology spaces.
We think that it is natural to consider 
the class of uniserial representations as building blocks
for an `interesting' category of representations, as described
at the beginning of this paper. 
We recall that, for associative algebras, the class of uniserial modules 
is very relevant, a foundational result here
is due to T. Nakayama \cite{Na} (see also \cite{ASS} or\cite{ARS}) and it states that every finitely generated module over a serial ring is a direct sum of uniserial modules. For more information 
in the associative case we refer the reader mainly to \cite{ASS, ARS, Pu}, and also \cite{BH-Z, H-Z,NGB}.
On the other hand, in the Lie algebra case, very little is known about uniserial representations. 

The idea of considering 
uniserial modules as building blocks of an interesting monoidal category naturally leads to the study of   tensor products of uniserial representations and intertwining operators. 
Additionally, another motivation for this project is to find a better way to thoroughly describe certain cohomology spaces associated to 
an algebra (associative or Lie)
viewed as a module over its entire Lie algebra of derivations, which is, in general, non-semisimple. Therefore, tensor products of 
uniserial modules are of particular relevance to us, especially when the cohomology space possesses a Gerstenhaber or a Poisson structure.

In this article and \cite{CGR} we address the problem of describing
the tensor product of two uniserial $\g$-modules in terms of uniserials for certain families of Lie algebras $\g$.
In contrast to the Nakayama case, these tensor products are not at all a direct sum of uniserials, in particular
there are many cases where the tensor product of 
two uniserial $\g$-modules is an indecomposable $\g$-module but not uniserial.

\subsection{Main results} In this paper, all Lie algebras and representations considered 
are assumed to be finite dimensional over a field $\F$ 
of characteristic zero.
For $n\ge 0$, we denote 
by $\mathfrak{a}_{n}$ the abelian Lie algebra of dimension $n+1$, 
by $\mathfrak{h}_{n}$ the Heisenberg Lie algebra of dimension $2n+1$, 
and 
by $V(n)$ the irreducible $\sl(2)$-module with highest weight $n$ ($\dim V(n)=n+1$).
In addition, for $n\ge 1$, 
$\sl(2)\ltimes \mathfrak{a}_n$ denotes 
the Lie algebra obtained by letting $\sl(2)$ act on $\mathfrak{a}_{n}$
so that 
$\mathfrak{a}_n\simeq V(n)$; and, similarly,
 $\sl(2)\ltimes \mathfrak{h}_n$
 denotes 
the Lie algebra where 
$\mathfrak{h}_n\simeq V(2n-1)\oplus V(0)$ as $\sl(2)$-modules. 
We notice that $\sl(2)\ltimes \mathfrak{a}_{2n-1}$ is isomorphic to 
the quotient
 $\sl(2)\ltimes \mathfrak{h}_n$ mod its 1-dimensional center 
 $\mathfrak{z}\big(\sl(2)\ltimes \mathfrak{h}_n\big)\simeq V(0)$. 
The Lie algebras 
$\sl(2)\ltimes \mathfrak{a}_{2n-1}$ (note that $\dim \mathfrak{a}_{2n-1}$ is even) are known as
conformal Galilei Lie algebras, and the 
1-dimensional central extensions
of them, $\sl(2)\ltimes \mathfrak{h}_n$, are the extended conformal Galilei Lie algebras.
For $n=1$, $\sl(2)\ltimes \mathfrak{h}_n$ is also known as 
Schr\"odinger algebra.
Galilei algebras and their representations
attract considerable attention, see for instance 
\cite{Aizawa2012, LU2014,MASTEROV2024,Somberg2018} and 
references therein.
It is worth mentioning that
$\mathfrak{sp}(2n)$ is contained in $\text{Der}(\mathfrak{h}_n)$ 
and thus, $\sl(2)$ may act on  
$\mathfrak{h}_n$ in many different ways. In our case, $\sl(2)$ acts on $\mathfrak{h}_n$
 as a principal $\mathfrak{s}$-triple in $\mathfrak{sp}(2n)$.

In this work, we obtain $\sl(2)$-module structure of 
the socle 
 of the tensor product $V\otimes W$ and, as an application,  we compute
the space of intertwining
operators $\text{Hom}_{\sl(2)\ltimes \mathfrak{h}_n}(V,W)$
where $V$ and $W$ are 
(almost arbitrary) uniserial representations of 
 $\sl(2)\ltimes \mathfrak{h}_n$.  
 
 To describe more precisely our results, we need to recall
 the classification of all the isomorphism classes of 
 uniserial $\big(\sl(2)\ltimes \mathfrak{h}_n\big)$-modules.
 This was obtained in 
 \cite{CGS2}.
 This classification is reviewed with details in 
 \S\ref{sec.3} and a 
 rough description of it is given below. 
 In what follows, and for the rest of the paper, 
 $m=2n-1$.
 
\begin{enumerate}[$\bullet$]
\item \emph{Non-faithful uniserial $\big(\sl(2)\ltimes \mathfrak{h}_n\big)$-modules.} 
Since 
 $\mathfrak{z}\big(\sl(2)\ltimes \mathfrak{h}_n\big)$ acts  trivially on them, 
 they are in correspondence with the 
 uniserial $\big(\sl(2)\ltimes \mathfrak{a}_{m}\big)$-modules.
 In turn, these were classified in \cite{CS_JofAlg}:
 
\medskip

\begin{enumerate}[--]
\item A general family $E(a,b)$, where $a,b$ are non-negative integers with certain restrictions (depending on $n$). 
The composition length 
of $E(a,b)$ is $2$.

\smallskip

\item A general family $Z(a,\ell)$ and its duals. Here 
$a$ and $\ell$ are a non-negative integers. The composition length 
of $Z(a,\ell)$ is $\ell+1$.

\smallskip

\item Some exceptional modules
with composition lengths $3$ and $4$. 
\end{enumerate} 

\medskip

\noindent
The modules $Z(a,\ell)$ and their duals are referred to as \emph{modules of type $Z$.}

\medskip

\item \emph{Faithful  uniserial $\big(\sl(2)\ltimes \mathfrak{h}_n\big)$-modules}. All of them have composition length $3$.

\medskip

\begin{enumerate}[--]
\item For $n=1$: Two families denoted by $FU_a^+$ and $FU_a^-$, with $a$ an integer that satisfies $a\ge0$ and $a\ge1$, respectively.

\smallskip

\item For $n=2$: Only four equivalence classes, they are denoted by $FU_{(0,3,0)}$, 
$FU_{(1,4,1)}$, 
$FU_{(1,2,1)}$ and 
$FU_{(4,3,4)}$.

\smallskip

\item For any $n\geq 3$: Only three equivalence classes, they are denoted by 
$FU_{(0,m,0)}$, 
$FU_{(1,m+1,1)}$ and 
$FU_{(1,m-1,1)}$ (recall that $m=2n-1$). 
\end{enumerate}

\medskip

\noindent
All faithful uniserial modules (except $FU_{(4,3,4)}$, $n=2$)
 are, in some sense, of a similar type and 
are referred to as \emph{standard faithful modules.}
The $\big(\sl(2)\ltimes \mathfrak{h}_2\big)$-module $FU_{(4,3,4)}$ is quite exceptional.
\end{enumerate}

\medskip

The modules $E(a,b)$ (which have composition length equal to 2) constitute the building blocks of all the other uniserial
modules: all uniserials can be obtained by 
combining the modules $E(a,b)$ in a subtle way governed by the zeros of the 6$j$-symbols (see \cite{CGS1,CS_JofAlg}).
As a consequence, the $\big(\sl(2)\ltimes \mathfrak{h}_n\big)$-module structure of the tensor product
of two uniserial representations of 
 $\sl(2)\ltimes \mathfrak{h}_n$ depends strongly on the 
 $\big(\sl(2)\ltimes \mathfrak{h}_n\big)$-module structure of $E(a,b)\otimes E(c,d)$ which is already quite involved.

In \cite{CGR} we stated a conjecture that provides the description of the socle of $E(a,b)\otimes E(c,d)$ for any $a,b,c,d$ (see Conjecture \ref{conj:length2} below) and we proved the part of it
that was necessary to obtain the socle of 
$V\otimes W$ 
and the intertwining
operators $\text{Hom}_{\sl(2)\ltimes \mathfrak{h}_n}(V,W)$
where $V$ and $W$ are 
uniserial representations of 
 $\sl(2)\ltimes \mathfrak{h}_n$ 
 of type $Z$ 
 (in fact, we dealt in \cite{CGR}
 with 
 uniserial representations of
 $\sl(2)\ltimes \mathfrak{a}_m$, instead of 
 $\sl(2)\ltimes \mathfrak{h}_n$, recall that,
 for modules of type $Z$,
 the action of $\mathfrak{z}\big(\sl(2)\ltimes \mathfrak{h}_n\big)$ is trivial). 
 In particular, we proved that  the socle of 
$V\otimes W$ is multiplicity free as $\sl(2)$-modules.
As an application of these results, we proved in \cite{CGR} that if $V$ and $W$ are 
 $\big(\sl(2)\ltimes \mathfrak{h}_n\big)$-modules 
 of type $Z$, then $V$ and $W$ are determined from $V\otimes W$.
 Moreover, we 
 provided a procedure to identify the corresponding 
 parameters $a$ and $\ell$ of $V$ and $W$ from $V\otimes W$. 
 This is a rare property, even if the factors are irreducible, it is not frequent that the factors $V$ and $W$  are determined
 from $V\otimes W$ (see \cite{MOROTTI_Rep_Theory} and references within).

 \medskip

In this paper, we extend the results of 
 \cite{CGR} obtaining 
 the socle of 
$V\otimes W$ 
and the intertwining
operators $\text{Hom}_{\sl(2)\ltimes \mathfrak{h}_n}(V,W)$
when both $V$ and $W$ are 
standard faithful uniserial $\big(\sl(2)\ltimes \mathfrak{h}_n\big)$-modules, 
or when one of them is standard faithful and the other one is uniserial of type $Z$.
 In contrast to the non-faithful case, in the standard faithful case it may happen that 
 the socle of 
 $V\otimes W$ is not multiplicity free as $\sl(2)$-module
 (this occurs when $V\simeq W$). 
 As a consequence, if 
$V$ and $W$ are isomorphic standard faithful uniserials, 
then the space of 
intertwining operators $\text{Hom}_{\sl(2)\ltimes \mathfrak{h}_n}(V,W)$
is 2-dimensional.
 The main step toward these results 
 requires making a considerable advance 
 in the proof of Conjecture \ref{conj:length2}.
 See the comments after it to know what is
 still open  about this conjecture.

 \medskip

The paper is organized as follows. 
 In \S\ref{Sec.Preliminaries} we review some basic facts about 
 uniserial representations of Lie algebras and recall all the 
 necessary definitions and formulas involving 
 the Clebsch-Gordan coefficients. 
In \S\ref{sec.3} we review the classification of all 
uniserial representations of the Lie algebras 
$\sl(2)\ltimes \mathfrak{a}_{m}$ (obtained in \cite{CS_JofAlg})
and 
$\sl(2)\ltimes \mathfrak{h}_{n}$ (obtained in \cite{CGS2}).
The main section of the paper is \S\ref{sec.4} and we obtain in it the 
$\sl(2)$-module structure of the socle of the tensor product of
two (non-exceptional) uniserial $\big(\sl(2)\ltimes \mathfrak{h}_n\big)$-modules
$V$ and $W$:
Theorem \ref{thm:main} recalls the case when 
 $V$ and $W$ are of type $Z$ (obtained in \cite{CGR}), 
 Conjecture \ref{conj:length2} deals with all the possible cases of
 composition length 2, 
 Theorem \ref{prop.conjecture} confirms the part of Conjecture \ref{conj:length2} needed to 
 prove Theorems \ref{thm.fielvsnofiel} and \ref{thm.fielvsfiel},
 Theorem \ref{thm.fielvsnofiel} gives the socle when $V$ is of type $Z$ and 
 $W$ is standard faithful, and 
 Theorem \ref{thm.fielvsfiel} describes the socle when both $V$ and 
 $W$ are standard faithful. 
 Finally, in \S\ref{sec.5} we obtain the space of intertwining operators as an application of the results obtained in \S\ref{sec.4}. 
 Our proof of 
 Theorem \ref{prop.conjecture} is technical and long, 
 it requires to consider 
 some linear systems with entries given by the 
 Clebsch-Gordan coefficients, and thus we decided to devote \S\ref{sec.proof_conj} to it.

\section{Preliminaries}
\label{Sec.Preliminaries}
\subsection{The Clebsch-Gordan coefficients}
\label{Subsec.Clebsch-Gordan}
Recall that $\F$ is a field of characteristic zero and that all Lie algebras and representations are assumed to be finite dimensional 
over $\F$.
Let 
\begin{equation}\label{eq.basis_sl2}
e=\begin{pmatrix}
0 & 1 \\
0 & 0
\end{pmatrix},\qquad
h=\begin{pmatrix}
1 & 0 \\
0 & -1
\end{pmatrix},\qquad
f=\begin{pmatrix}
0 & 0 \\
1 & 0
\end{pmatrix}
\end{equation}
be the standard basis of $\sl(2)$.
Let $V(a)$ be the irreducible $\sl(2)$-module with highest weight $a\ge0$.
We fix a basis $\{v_0^a,\dots,v_a^a\}$ of $V(a)$ relative to which the basis $\{e,h,f\}$ acts as follows:
\begin{align}\notag
e\, v_k^{a}=&\sqrt{
\frac{a}{2}\left(\frac{a}{2}+1\right)-
\left(\frac{a}{2}-k+1\right)\left(\frac{a}{2}-k\right)
}
v_{k-1}^{a},\\[2mm]\label{eq.sl2-action_Vm}
h\, v_k^{a}=&(a-2k)v_k^{a},\\[2mm]\notag
f\, v_k^{a}=&\sqrt{
\frac{a}{2}\left(\frac{a}{2}+1\right)-
\left(\frac{a}{2}-k-1\right)\left(\frac{a}{2}-k\right)
}
v_{k+1}^{a},
\end{align} 
where $0\leq k\leq a$ and $v_{-1}^a=v_{a+1}^a=0$.
The basis $\{v_0^a,\dots,v_a^a\}$ has been chosen in a convenient way to
introduce below the Clebsch-Gordan coefficients.
Note that if we denote by $(x)_a$ the matrix of 
$x\in\sl(2)$ relative to the basis $\{v_0^a,\dots,v_a^a\}$, then 
$\{(e)_1,(h)_1,(f)_1\}$
are as in \eqref{eq.basis_sl2},
and 
\begin{equation*}
(e)_2=\begin{pmatrix}
0 & \sqrt{2} & 0 \\
0 & 0 & \sqrt{2} \\
0 & 0 & 0
\end{pmatrix},\qquad
(h)_2=\begin{pmatrix}
2 & 0 & 0 \\
0 & 0 & 0 \\
0 & 0 & -2
\end{pmatrix},\qquad
(f)_2=\begin{pmatrix}
0 & 0 & 0 \\
\sqrt{2} & 0 & 0 \\
0 & \sqrt{2} & 0
\end{pmatrix}. 
\end{equation*}
This means 
that we may assume that 
$\{v_0^2,v_1^2,v_2^2\}=\{-e,\frac{\sqrt{2}}{2}h,f\}$.

We know that $V(a)\simeq V(a)^*$ as $\sl(2)$-modules. 
More precisely, if $\{(v_0^a)^*,\dots,(v_a^a)^*\}$ 
is the dual basis of $\{v_0^a,\dots,v_a^a\}$ then 
the map 
\begin{equation}\label{eq.dual}
\begin{split}
V(a) & \rightarrow V(a)^* \\
v_k^a & \mapsto (-1)^{a-k} (v_{a-k}^a)^* 
\end{split}
\end{equation}
gives an explicit $\sl(2)$-isomorphism.

It is well known that the decomposition of the tensor product
of two irreducible $\sl(2)$-modules $V(a)$ and $V(b)$
 is 
\begin{equation}\label{eq.tensor}
V(a)\otimes V(b)\simeq V(a+b)\oplus V(a+b-2) \oplus \cdots \oplus V(|a-b|).
\end{equation}
This is the well known Clebsch-Gordan formula.

The 
\emph{Clebsch-Gordan coefficients}
\[
CG(j_{1},m_{1};j_{2},m_{2}\mid j_3,m_3)
\]
are defined below and they 
provide an explicit $\sl(2)$-embedding 
$V(c) \rightarrow V(a)\otimes V(b)$ 
which is the following
\begin{align*}
V(c) & \rightarrow V(a)\otimes V(b) \\
v_k^c & \mapsto v_k^{a,b,c}
\end{align*}
where, by definition, 
\begin{equation}\label{eq.Vc_en_tensor}
v_k^{a,b,c}=\sum_{i,j} 
CG(\tfrac{a}{2},\tfrac{a}{2}-i;\,\tfrac{b}{2},\tfrac{b}{2}-j
\,|\,\tfrac{c}{2},\tfrac{c}{2}-k)\,
 v_i^a\otimes v_j^b,
\end{equation}
where the sum runs over all $i,j$ such that 
$\tfrac{a}{2}-i+\tfrac{b}{2}-j=\tfrac{c}{2}-k$
(in fact, we could let $i,j$ run freely since the Clebsch-Gordan
coefficient involved is zero if $\tfrac{a}{2}-i+\tfrac{b}{2}-j\ne\tfrac{c}{2}-k$).
Since 
\begin{equation}\label{eq.Hom}
\text{Hom}(V(b),V(a))\simeq V(b)^*\otimes V(a) 
\simeq V(a)\otimes V(b) 
\end{equation}
it follows from \eqref{eq.dual} and \eqref{eq.Vc_en_tensor} that 
the map $V(c) \rightarrow \text{Hom}(V(b),V(a))$
given by 
\begin{align}
v_k^c & \mapsto 
\sum_{i,j} 
CG(\tfrac{a}{2},\tfrac{a}{2}-i;\,\tfrac{b}{2},\tfrac{b}{2}-j
\,|\,\tfrac{c}{2},\tfrac{c}{2}-k)\,
 v_i^a\otimes v_j^b, \notag \\
 & \mapsto 
\sum_{i,j} (-1)^{b-j}
CG(\tfrac{a}{2},\tfrac{a}{2}-i;\,\tfrac{b}{2},\tfrac{b}{2}-j
\,|\,\tfrac{c}{2},\tfrac{c}{2}-k)\,
 v_i^a\otimes (v_{b-j}^b)^*, \notag \\ \label{eq.embedding_Hom}
 & \mapsto 
\sum_{i,j} (-1)^{j}
CG(\tfrac{a}{2},\tfrac{a}{2}-i;\,\tfrac{b}{2},-\tfrac{b}{2}+j
\,|\,\tfrac{c}{2},\tfrac{c}{2}-k)\,\,
(v_{j}^b)^* \otimes v_i^a
\end{align}
is an $\sl(2)$-module homomorphism.

We now recall briefly the basic definitions and 
facts about the Clebsch-Gordan coefficients.
We will mainly follow \cite{VMK}.

Given three non-negative integers or half-integers $j_1,j_2,j_3$, we say that they \emph{satisfy
the triangle condition} if
$j_1+j_2+j_3$ is an integer and 
they can be the side lengths of a (possibly degenerate) 
triangle (that is
$|j_1-j_2|\le j_3\le j_1+j_2$).
We now define (see \cite[\S8.2, eq.(1)]{VMK})
\[
 \Delta(j_1,j_2,j_3)=\sqrt{\frac{(j_1+j_2-j_3)!(j_1-j_2+j_3)!(-j_1+j_2+j_3)!}{(j_1+j_2+j_3+1)!}}
 \]
if $j_1, j_2, j_3$ satisfies the triangle condition;
otherwise, we set $\Delta(j_1,j_2,j_3)=0$.

If, in addition, $m_1$, $m_2$ and $m_3$ are three integers or half-integers, then 
the corresponding \emph{Clebsch-Gordan coefficient}
\[ 
CG(j_{1},m_{1};j_{2},m_{2}| j_3,m_3)
\]
is zero unless $m_1+m_2= m_3$ and $|m_i|\le j_i$ for $i=1,2,3$. In this case, the following formula is valid for $m_3\ge 0$ and $j_1\ge j_2$ (see \cite[\S8.2, eq.(3)]{VMK})
 \begin{multline*}
 CG(j_{1},m_{1};j_{2},m_{2}\mid j_3,m_3)=
 \Delta(j_1,j_2,j_3)\,\sqrt{(2j_3+1) } \\[1mm]
 \times \sqrt{(j_1+m_1)!(j_1-m_1)!(j_2+m_2)!(j_2-m_2)!(j_3+m_3)!(j_3-m_3)! } \\[1mm]
 \times
 \sum_r\frac{(-1)^r}{r!(j_1\!+\!j_2\!-\!j_3\!-\!r)!(j_1\!-\!m_1\!-\!r)!(j_2\!+\!m_2\!-\!r)!(j_3\!-\!j_2\!+\!m_1\!+\!r)!(j_3\!-\!j_1\!-\!m_2\!+\!r)!},
 \end{multline*}
 where the sum runs through all integers
$r$ for which the argument of every factorial is non-negative.
If either $m_3< 0$ or $j_1< j_2$ 
we have
\begin{align}
 CG(j_{1},m_{1};j_{2},m_{2}\mid j_3,m_3)
 & = (-1)^{j_1+j_2-j_3}\;
 CG(j_{1},-m_{1};j_{2},-m_{2}\mid j_3,-m_3)\notag
 \\[2mm]\label{eq:swap}
 & = (-1)^{j_1+j_2-j_3}\;
 CG(j_{2},m_{2};j_{1},m_{1}\mid j_3,m_3). 
\end{align}
In addition, it also holds
\begin{equation}\label{eq.simmetryCG}
 CG(j_{1},m_{1};j_{2},m_{2}\mid j_3,m_3)
=(-1)^{j_1-m_1}
\sqrt{\frac{2j_3+1}{2j_2+1}}\;
 CG(j_{1},m_{1};j_{3},-m_{3}\mid j_2,-m_2).
\end{equation}
In the following sections, we will need the following particular values of the Clebsch-Gordan coefficients.
Here, $a,b$ are integers and $i=0,\dots,a$, 
$j=0,\dots,b$. 

\begin{equation}\label{eq.extremoCG0}
CG(\tfrac{a}{2},\tfrac{a}{2}-i;\,\tfrac{b}{2},\tfrac{b}{2}-j
\,|\,\tfrac{a+b}{2},\tfrac{a+b}{2}-i-j)
=
\sqrt{\frac{a!b!(a+b-i-j)!(i+j)!}{i!j!(a+b)!(a-i)!(b-j)!}},
\end{equation}
\begin{multline}\label{eq.extremoCG1}
CG(\tfrac{a}{2},\tfrac{a}{2}-i;\,\tfrac{b}{2},j-\tfrac{b}{2}
\,|\,\tfrac{a-b}{2},\tfrac{a-b}{2}-i+j) \\
=(-1)^j
\sqrt{\frac{(a-i)!\;i!\;b!\;(a-b+1)!}
{(a+1)!\;j!\;(b-j)!\;(a-b-i+j)!\;(i-j)!}},
\end{multline}
\begin{multline}\label{eq.extremoCG11}
CG(\tfrac{a}{2},i-\tfrac{a}{2};\,\tfrac{b}{2},\tfrac{b}{2}-j
\,|\,\tfrac{b-a}{2},\tfrac{b-a}{2}+i-j) \\
=(-1)^{a}CG(\tfrac{b}{2},\tfrac{b}{2}-j;\,
\tfrac{a}{2},i-\tfrac{a}{2}
\,|\,\tfrac{b-a}{2},\tfrac{b-a}{2}+i-j) \\
=(-1)^j
\sqrt{\frac{(b-j)!\;j!\;a!\;(b-a+1)!}
{(b+1)!\;i!\;(a-i)!\;(b-a-j+i)!\;(j-i)!}},
\end{multline}
\begin{multline}\label{eq.extremoCG}
CG(
\tfrac{a}{2},\tfrac{a}{2}-i;\,
\tfrac{b}{2},\tfrac{b}{2}-j\,|\,
\tfrac{a+b}{2}-i-j,\tfrac{a+b}{2}-i-j) \\
=
(-1)^i
\sqrt{\frac{(a+b-2i-2j+1)!\;(i+j)!\;(a-i)!\;(b-j)!}
{(a+b-i-j+1)!\;(a-i-j)!\;(b-i-j)!\;i!\;j!}}.
\end{multline}

\subsection{Uniserial representations}

Given a Lie algebra $\g$, a $\g$-module $V$ is 
\emph{uniserial} if it admits a unique composition series. 
In other words, $V$ is uniserial if the socle series
\[
0 = \text{soc}^0(V)\subset \text{soc}^1(V) \subset \cdots \subset \text{soc}^n(V) = V
\]
is a composition series of $V$, that is, the socle factors $\text{soc}^{i}(V)/\text{soc}^{i-1}(V)$ are irreducible for all $1\le i \le n$. 
Recall that $\text{soc}^1(V)=\text{soc}(V)$ 
is the sum of all irreducible  
$\g$-submodules of $V$ and 
$\text{soc}^{i}(V)/\text{soc}^{i-1}(V)
=\text{soc}(V/\text{soc}^{i-1}(V))$.
Note that for uniserial modules, 
the \emph{composition length} of $V$ coincides with
 its \emph{socle length}.

If the
 Levi decomposition of $\g$ is $\g = \mathfrak{s} \ltimes \mathfrak{r}$, (with $\mathfrak{r}$ the solvable radical and $\mathfrak{s}$ semisimple) we may choose irreducible 
 $\mathfrak{s}$-submodules $V_i\subset V$, $1\le i \le n$, such that 
 \begin{equation}\label{eq.soc_decomp}
 V=V_1\oplus \cdots \oplus V_n 
 \end{equation}
with 
$V_i
 \simeq \text{soc}^{i}(V)/\text{soc}^{i-1}(V)$ 
 as  $\mathfrak{s}$-modules and 
 \[
 \mathfrak{r} V_i \subset V_1\oplus \cdots \oplus V_i. 
 \]
In fact, if 
$[\mathfrak{s},\mathfrak{r}]=\mathfrak{r}$, then $\mathfrak{r} V_i \subset V_1\oplus \cdots \oplus V_{i-1}$, 
see Lemma \ref{lemma:soc} below. 

\begin{definition}\label{rmk:order}
We say that \eqref{eq.soc_decomp} is the 
\emph{socle decomposition} of $V$.
We point out that, in the socle decomposition of a 
$\g$-module, the order of the summands is relevant.
\end{definition}

The proof of the following lemma can be found in 
\cite[Lemmas 2.1 and 2.2]{CGR}.

\begin{lemma}\label{lemma:soc}
Assume that $\mathfrak{r}=[\mathfrak{s},\mathfrak{r}]$ and 
let $V$ be a $\g$-module.
Then 
\begin{enumerate}[(1)]
\item $\text{soc}(V)=V^\mathfrak{r}$.
\item If $V=V_1\oplus \cdots \oplus V_n$ is a vector space
 decomposition such that $\text{soc}(V)=V_1$ and 
$\mathfrak{r} V_k \subset V_{k-1}$ for all $k=2,\dots,n$, then 
$\text{soc}^k(V)=V_1\oplus \cdots \oplus V_k$
for all $k=1,\dots,n$.
\end{enumerate}
\end{lemma}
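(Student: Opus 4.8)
The plan is to prove Lemma \ref{lemma:soc} by exploiting the standing hypothesis $\mathfrak{r}=[\mathfrak{s},\mathfrak{r}]$, which forces $\mathfrak{r}$ to act by weight-lowering operators once we restrict to $\mathfrak{s}$-isotypic pieces. For part (1), the inclusion $V^{\mathfrak{r}}\subseteq\text{soc}(V)$ is easy: $V^{\mathfrak{r}}$ is a $\g$-submodule on which $\mathfrak{r}$ acts trivially, so it is completely reducible as an $\mathfrak{s}$-module and each $\mathfrak{s}$-irreducible summand is in fact a $\g$-submodule, hence lies in the socle. The reverse inclusion $\text{soc}(V)\subseteq V^{\mathfrak{r}}$ is the substantive direction: I would take an irreducible $\g$-submodule $U\subseteq V$ and show $\mathfrak{r}U=0$. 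Since $\mathfrak{r}U$ is again a $\g$-submodule of the irreducible $U$, either $\mathfrak{r}U=0$ or $\mathfrak{r}U=U$. The key is to rule out $\mathfrak{r}U=U$; here is where $\mathfrak{r}=[\mathfrak{s},\mathfrak{r}]$ enters, so that $\mathfrak{r}$ acts nilpotently in an appropriate graded sense and cannot act surjectively on a finite-dimensional module. Concretely, I expect to use that the radical $\mathfrak{r}$ is nilpotent (being a solvable ideal whose action, restricted to the $\mathfrak{s}$-structure, strictly decreases a suitable $\mathfrak{s}$-invariant filtration degree), which I would justify via Lie's theorem applied to $\mathfrak{r}$ together with the condition $[\mathfrak{s},\mathfrak{r}]=\mathfrak{r}$ eliminating any trivial $\mathfrak{s}$-subquotient on which $\mathfrak{r}$ could act as scalars.

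For part (2), I would argue by induction on $k$, using part (1) as the base mechanism at each stage. Given the decomposition $V=V_1\oplus\cdots\oplus V_n$ with $\text{soc}(V)=V_1$ and $\mathfrak{r}V_k\subseteq V_{k-1}$, the goal is to identify $\text{soc}^k(V)$ with $V_1\oplus\cdots\oplus V_k$. The base case $k=1$ is the hypothesis $\text{soc}(V)=V_1$. For the inductive step, I would pass to the quotient $\overline{V}=V/\text{soc}^{k-1}(V)=V/(V_1\oplus\cdots\oplus V_{k-1})$ and observe that the images $\overline{V_k},\dots,\overline{V_n}$ give a vector-space decomposition of $\overline{V}$ satisfying the same structural hypotheses: namely $\mathfrak{r}\,\overline{V_j}\subseteq\overline{V_{j-1}}$ for $j>k$, with $\overline{V_k}$ now annihilated by $\mathfrak{r}$ (since $\mathfrak{r}V_k\subseteq V_{k-1}\subseteq\text{soc}^{k-1}(V)$). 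Applying part (1) to $\overline{V}$ identifies $\text{soc}(\overline{V})$ with $\overline{V}^{\mathfrak{r}}$, and I would check that $\overline{V}^{\mathfrak{r}}=\overline{V_k}$ exactly, using that $\mathfrak{r}$ maps each higher $\overline{V_j}$ into the next-lower piece and hence has no invariants beyond $\overline{V_k}$. Unwinding the definition $\text{soc}^k(V)/\text{soc}^{k-1}(V)=\text{soc}(V/\text{soc}^{k-1}(V))$ then yields $\text{soc}^k(V)=V_1\oplus\cdots\oplus V_k$.

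The main obstacle I anticipate is the reverse inclusion in part (1), specifically rigorously establishing that $\mathfrak{r}$ cannot act surjectively on an irreducible $\g$-submodule. The cleanest route is to show $\mathfrak{r}$ acts nilpotently on all of $V$: by Lie's theorem there is a common eigenvector structure, and the condition $\mathfrak{r}=[\mathfrak{s},\mathfrak{r}]$ guarantees that any character by which $\mathfrak{r}$ could act is $\mathfrak{s}$-invariant, yet the only $\mathfrak{s}$-invariant linear functional on $\mathfrak{r}=[\mathfrak{s},\mathfrak{r}]$ is zero. Hence every element of $\mathfrak{r}$ acts nilpotently, so by Engel's theorem $\mathfrak{r}$ acts by a nilpotent Lie algebra of operators, and a nilpotent operator algebra cannot act surjectively on a nonzero finite-dimensional space. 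This forces $\mathfrak{r}U=0$ and completes the argument. The verification in part (2) that $\overline{V}^{\mathfrak{r}}$ is precisely $\overline{V_k}$ is routine once the filtration-lowering property is in place, so I expect no serious difficulty there.
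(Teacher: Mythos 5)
First, note that the paper does not actually contain its own proof of this lemma: it defers to \cite[Lemmas 2.1 and 2.2]{CGR}. So I can only judge your argument on its merits. Your part (1) is correct and is the standard argument: the inclusion $V^{\mathfrak r}\subseteq\text{soc}(V)$ via Weyl's theorem applied to the $\g$-submodule $V^{\mathfrak r}$, and the reverse inclusion by showing $\mathfrak r$ acts nilpotently, using that any weight of the ideal $\mathfrak r$ on a $\g$-module vanishes on $[\g,\mathfrak r]\supseteq[\mathfrak s,\mathfrak r]=\mathfrak r$, hence is zero, and then Engel. (Minor point: Lie's theorem needs $\overline{\F}$, so extend scalars and note that nilpotency descends; this is cosmetic.)

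In part (2) there is a genuine gap in the justification of the key step. You claim that $\overline{V}^{\mathfrak r}=\overline{V_k}$ follows \emph{because} ``$\mathfrak r$ maps each higher $\overline{V_j}$ into the next-lower piece and hence has no invariants beyond $\overline{V_k}$.'' That implication is false on its own: an operator that lowers the filtration degree can perfectly well annihilate a vector sitting in a higher piece, so the filtration-lowering property does not by itself exclude invariants in $\overline{V_j}$ for $j>k$. The correct argument needs an extra input that you never invoke at this point, namely the hypothesis $\text{soc}(V)=V_1$ together with part (1) applied to $V$ itself (not to $\overline V$), giving $V^{\mathfrak r}=V_1$. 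Concretely: if $\bar v=\sum_{j\ge k}\bar v_j\in\overline V{}^{\mathfrak r}$ with $v_j\in V_j$, then for each $r\in\mathfrak r$ the component of $rv$ in $V_{j-1}$ is exactly $rv_j$; since $rv\in V_1\oplus\cdots\oplus V_{k-1}$ and the sum is direct, $rv_j=0$ for all $j\ge k+1$. Hence $v_j\in V^{\mathfrak r}=V_1$ for $j\ge k+1$, and $V_1\cap V_j=0$ forces $v_j=0$. This three-line computation closes the gap, but the ingredient $V^{\mathfrak r}=V_1$ is essential and is exactly what your stated reason omits; as written, the step ``would fail'' if one tried to carry it out from the filtration-lowering property alone. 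The rest of your inductive scheme (base case, passing to $V/\text{soc}^{k-1}(V)$, applying part (1) to the quotient) is sound, and the easy inclusion $V_1\oplus\cdots\oplus V_k\subseteq\text{soc}^k(V)$ is handled correctly by your observation that $\mathfrak r$ kills $\overline{V_k}$.
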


\section{Uniserial representations of 
\texorpdfstring{$\sl(2)\ltimes \mathfrak{h}_n$}{}}\label{sec.3}

\subsection{The Lie algebra \texorpdfstring{$\sl(2)\ltimes \mathfrak{h}_n$}{}}
Let us introduce some notation, recall some basic facts about 
the Heisenberg Lie algebra $\mathfrak{h}_n$ and define
the Lie algebra $\sl(2)\ltimes \mathfrak{h}_n$.
Let us fix $n\geq 1$ and, for the rest of the paper, set $m=2n-1$.
We know that $\sl(2)$ acts by derivations on $\mathfrak{h}_n$ in such a way that 
\[
\mathfrak{h}_n\simeq V(m)\oplus V(0)
\]
as $\sl(2)$-modules. 
In order to have this action described in harmony with \S\ref{Subsec.Clebsch-Gordan},
we:
\begin{itemize}
\item[\tiny$\bullet$] assume, by definition, that $\mathfrak{h}_n=V(m)\oplus V(0)$, ($m=2n-1$),
\item[\tiny$\bullet$] denote by $\mathfrak{h}_n(m)$ the subspace $V(m)$ of $\mathfrak{h}_n$,
\item[\tiny$\bullet$] denote by 
$
\{e_0,\hdots,e_m\}
$
the basis of $\mathfrak{h}_n(m)$ corresponding to the basis $\{v_0^m,\hdots, v_m^m\}$ of $V(m)$ chosen in \eqref{eq.sl2-action_Vm}
(that is $e_k=v_k^m$ when $V(m)$ is viewed inside $\mathfrak{h}_n$),
\item[\tiny$\bullet$] denote by $z\in\mathfrak{h}_n$ the vector $v_0^0\in V(0)$ chosen in \eqref{eq.sl2-action_Vm}.
\end{itemize}
In this context, the non-zero brackets in $\mathfrak{h}_n$ are
\begin{align}\label{relacion-V(m)-V(0)}
[e_i,e_{m-i}]&=CG(\tfrac{m}{2},\,\tfrac{m}{2}-i;\,\tfrac{m}{2},-\tfrac{m}{2}+i\,|\,0,\,0)\,z \notag\\
&= (-1)^i\,\sqrt{\tfrac{1}{m+1}}\,z.
\end{align}
The center of $\mathfrak{h}_n$
is 
$\mathfrak{z}(\mathfrak{h}_n)=\F z$ and 
$\mathfrak{h}_n/\F z\simeq \mathfrak{a}_{m}$ as Lie algebras.
It is straightforward to see that \eqref{eq.sl2-action_Vm}
provides $\mathfrak{h}_n$ with an action of $\sl(2)$ by derivations. 
This action allow us to define the Lie algebras 
$\sl(2)\ltimes \mathfrak{h}_n$ and 
$\sl(2)\ltimes \mathfrak{a}_m$ 
($\mathfrak{a}_m$ is abelian of dimension $m+1=2n$). 
It is clear that $\F z$ is also the center of $\sl(2)\ltimes \mathfrak{h}_n$ and 
\[
\big(\sl(2)\ltimes \mathfrak{h}_n\big)/\F z\simeq \sl(2)\ltimes \mathfrak{a}_{m}
\]
as Lie algebras. 

In \cite{CGS2}, it is obtained the classification, up to isomorphism, of all uniserial representations of the Lie algebra $\sl(2)\ltimes \mathfrak{h}_n$. 
It is straightforward to see that a uniserial 
representation of $\sl(2)\ltimes \mathfrak{h}_n$ is faithful 
if and only if $z$ acts non-trivially.
Therefore, the classification was given in two stages: the non-faithful and the faithful ones.
The non-faithful ones are the same as those of the Lie algebra $\sl(2)\ltimes \mathfrak{a}_{m}\simeq \sl(2)\ltimes V(m)$ which were classified earlier in
\cite[Theorem 10.1]{CS_JofAlg}.
We now recall this classification.

\subsection{The non-faithful 
\texorpdfstring{$\big(\sl(2)\ltimes \mathfrak{h}_n\big)$-modules $E(a,b)$}{}}
If $a$ and $b$ are non-negative integers such that 
$\frac{m}2,\frac{a}2,\frac{b}2$ satisfy the triangle condition, 
it follows from \eqref{eq.tensor} and \eqref{eq.Hom}
that, up to scalar, there is a unique
$\sl(2)$-module homomorphism 
\[
V(m)\to \text{Hom}(V(b),V(a)).
\] 
Recall that the radical $\mathfrak{r}$ of 
$\sl(2)\ltimes \mathfrak{a}_{m}$ is $\mathfrak{a}_{m}\simeq V(m)$
as $\sl(2)$-modules. 
Thus, the above $\sl(2)$-module homomorphism
produces an action of 
$\mathfrak{r}$ on 
\[
E(a,b)=V(a)\oplus V(b)
\]
 such that 
$\mathfrak{r}$ maps $V(a)$ to $0$ and maps 
 $V(b)$ to $V(a)$ 
 as shown below for the basis 
 $\{e_s:s=0,\dots,m\}$ of $\mathfrak{a}_{m}$:
\begin{equation}\label{eq.actionV(m)}
e_s\, v_j^{b}=\sum_{i=0}^a
(-1)^j\, CG(\tfrac{a}{2},\tfrac{a}{2}-i;\,\tfrac{b}{2},-\tfrac{b}{2}+j
\,|\,\tfrac{m}{2},\tfrac{m}{2}-s)\,
v_{i}^{a}.
\end{equation}
Note that \eqref{eq.actionV(m)} is equivalent to \eqref{eq.embedding_Hom}.
Note also that the above sum has, in fact, at most one summand, that is
\begin{equation}\label{eq.actionV(m)1}
e_s\, v_j^{b}=
\begin{cases}
0,& \text{if $i\ne j+s+\frac{a-b-m}{2}$;} \\[2mm]
(-1)^j CG(\tfrac{a}{2},\tfrac{a}{2}-i;\,\tfrac{b}{2},-\tfrac{b}{2}+j
\,|\,\tfrac{m}{2},\tfrac{m}{2}-s)\,\,v_{i}^{a},& 
\text{if $i=j+s+\frac{a-b-m}{2}$.}
\end{cases}
\end{equation}
This action, combined with the action of $\sl(2)$ defines a uniserial 
$\big(\sl(2)\ltimes \mathfrak{a}_{m}\big)$-module structure with composition length 2 on $E(a,b)=V(a)\oplus V(b)$.
Consequently, this also defines a uniserial 
$\big(\sl(2)\ltimes \mathfrak{h}_{n}\big)$-module structure 
(of composition length 2) on $E(a,b)$ on which the center 
of $\sl(2)\ltimes \mathfrak{h}_{n}$ acts trivially.

In the following example, $n=2$ and thus $m=3$. 
The matrix below shows explicitly the action of $\sl(2)\ltimes \mathfrak{a}_{3}$
on the module $E(3,2)$. Recall that $\{e,h,f,e_s:s=0\dots 3\}$ is a basis of 
$\sl(2)\ltimes \mathfrak{a}_{3}$. 
The letters $e,h,f,e_s$ in the matrix only
indicate the non-zero entries of the corresponding basis element. 
\[
 \left( 
 \begin{array}{rrrrrrr} 
 3\, h & \sqrt{3}\,e & 0\;\; &  0\;\; &
 \sqrt{\frac{2}{5}}\,{e_1} & -\sqrt{\frac{3}{5}}\,{e_0}  & 0\;\;\; \\[2mm] 
 \sqrt{3}\,f & h\; & 2\, e & 0\;\; & \;\;
 2\sqrt{\frac{2}{15}}\,{e_2} & -\sqrt{\frac{1}{15}}\,{e_1} & -\sqrt{\frac{2}{5}}\,{e_0} \\[2mm]
 0\;\; & 2\, f & -h & \sqrt{3}\,e & 
 \sqrt{\frac{2}{5}}\,{e_3} & \sqrt{\frac{1}{15}}{e_2} & -2\sqrt{\frac{2}{15}}\,{e_1} \\[2mm] 
0\;\; & 0\;\; & \sqrt{3}\,f & -3\, h & 
0\;\;\; & \sqrt{\frac{3}{5}}\,{e_3} & -\sqrt{\frac{2}{5}}\,{e_2} \\[4mm] 
0 & 0 & 0 & 0 & 2\, h & \sqrt{2}\,e & 0\;\; \\[2mm] 
0 & 0 & 0 & 0 & \sqrt{2}\,f & 0\;\; & \sqrt{2}\,e \\[2mm] 
0 & 0 & 0 & 0 & 0\;\; & \sqrt{2}\,f & -2\, h
\end {array} \right) 
\]

It is straightforward to see that 
$E(a,b)^*\simeq E(b,a)$.
The action given in \eqref{eq.actionV(m)}
is the main building block for all other uniserial 
$\big(\sl(2)\ltimes \mathfrak{a}_{m}\big)$-modules as follows.

\subsection{Non-faithful 
\texorpdfstring{$\big(\sl(2)\ltimes \mathfrak{h}_n\big)$-modules of type $Z$}{}}
The above construction can be extended to arbitrary composition length 
\[
V(a_0)\oplus V(a_1)\oplus\cdots\oplus V(a_\ell)
\]
only when the sequence $\{a_i\}$ is monotonic (increasing or decreasing) and $|a_i-a_{i-1}|=m$, for all $i=1,\dots,\ell$.
More precisely, for the ``increasing case" 
let 
$\alpha$ and $\ell$ be non-negative integers and let 
$Z(\alpha,\ell)$ be the $\big(\sl(2)\ltimes \mathfrak{a}_{m}\big)$-module defined by
 \begin{equation}\label{eq.soc_decomp_Z}
Z(\alpha,\ell)=V(\alpha)\oplus V(\alpha+m)\oplus \cdots \oplus V(\alpha+\ell m)
 \end{equation}
as $\sl(2)$-module with action of 
$\mathfrak{r}=\mathfrak{a}_m$ sending 
\[
0\longleftarrow
V(\alpha)\longleftarrow 
V(\alpha+m)\longleftarrow \dots\longleftarrow 
V(\alpha+\ell m)
\]
as indicated in \eqref{eq.actionV(m)}: 
that is, the $i^{\text{th}}$-arrow 
($i=1,\dots,\ell$)
\[
V(\alpha+(i-1)m)\longleftarrow V(\alpha+im)
\]
means that the basis $\{e_s:s=0,\dots,m\}$ of $\mathfrak{a}_m$ acts as
\begin{equation*}
e_s\, v_j^{b}=\sum_{i=0}^a
(-1)^j\, CG(\tfrac{a}{2},\tfrac{a}{2}-i;\,\tfrac{b}{2},-\tfrac{b}{2}+j
\,|\,\tfrac{m}{2},\tfrac{m}{2}-s)\,
v_{i}^{a},
\end{equation*}
with $a=\alpha+(i-1)m$ and $b=\alpha+im$. 
This way, $Z(\alpha,\ell)$ becomes a uniserial  
$\big(\sl(2)\ltimes \mathfrak{a}_{m}\big)$-module 
and also a uniserial 
$\big(\sl(2)\ltimes \mathfrak{h}_{n}\big)$-module 
in which the center 
of $\sl(2)\ltimes \mathfrak{h}_{n}$ acts trivially.
(We point out that the above sequence serves as an indication 
of the action of $\mathfrak{r}$, there is no chain complex involved.)

We notice that $Z(\alpha,0)=V(\alpha)$ ($\mathfrak{r}$ acts trivially) and 
$Z(\alpha,1)=E(\alpha,\alpha+m)$, 
as $\big(\sl(2)\ltimes \mathfrak{h}_{n}\big)$-modules. 

The ``decreasing case" corresponds to the dual modules $Z(\alpha,\ell)^*$.
The modules $Z(\alpha,\ell)$ and $Z(\alpha,\ell)^*$
are called \emph{of type $Z$} and they are the unique 
isomorphism classes of uniserial $\big(\sl(2)\ltimes \mathfrak{a}_{m}\big)$-modules of 
composition length $\ell+1$ for $\ell \ge 4$. 
See Theorem \ref{thm.CS_Classification} below and  \cite{CS_JofAlg} for more details and explicit examples.

\subsection{Non-faithful 
\texorpdfstring{$\big(\sl(2)\ltimes \mathfrak{h}_n\big)$-modules of exceptional type (composition lengths $2$, $3$ and $4$)}{}}
The modules $E(a,b)$ with $|a-b|\ne m$ are not of type $Z$ and we consider them of exceptional type (of composition lengths $2$). 
For composition lengths $3$ and $4$ there are very few possible ways to ``combine" the modules $E(a,b)$ so that we do not fall in type $Z$.

For composition length equal to $3$, given $0\le c< 2m$ and 
$c\equiv 2m\mod 4$, 
 let 
\[
E_3(c)=
V(0)\oplus V(m)\oplus V(c)
\]
as $\sl(2)$-modules with action of 
$\mathfrak{r}$ sending 
\begin{center}
\begin{tikzpicture}[->,>=stealth',auto,node distance=2cm,thick]
 \node (0) {$0$};
 \node (1) [right of=0] {$V(0)$};
 \node (2) [right of=1] {$V(m)$};
 \node (3) [right of=2] {$V(c)$};

 \path[every node/.style={font=\sffamily\small}]
  (1) edge node [right] {} (0)
  (2) edge node [right] {} (1)
  (3) edge node [right] {} (2);
\end{tikzpicture}
\end{center}
 with the maps $V(c)\to V(m)$ and $V(m)\to V(0)$
given by \eqref{eq.actionV(m)}.

For composition length equal to $4$, if $m\equiv 0\mod 4$, 
there is a family of $\big(\sl(2)\ltimes \mathfrak{a}_{m}\big)$-modules, parameterized
by a non-zero scalar $t\in\F$, with a fixed socle decomposition. This is defined by 
\[
E_4(t)=
V(0)\oplus V(m)\oplus V(m)\oplus V(0)
\]
as $\sl(2)$-modules with action of 
$\mathfrak{r}$, sending each irreducible component as shown by the arrows
\begin{center}
\begin{tikzpicture}[->,>=stealth',auto,node distance=2cm,thick]
 \node (0) {$0$};
 \node (1) [right of=0] {$V(0)$};
 \node (2) [right of=1] {$V(m)$};
 \node (3) [right of=2] {$V(m)$};
 \node (4) [right of=3] {$V(0)$};

 \path[every node/.style={font=\sffamily\small}]
  (1) edge node [right] {} (0)
  (2) edge node [right] {} (1)
  (3) edge node [right] {} (2)
  (4) edge node [right] {} (3)
  (4) edge[bend right] node [left] {} (2);
\end{tikzpicture}
\end{center}
where the horizontal arrows are 
 given by \eqref{eq.actionV(m)} 
 and the bent arrow is $t$ times \eqref{eq.actionV(m)}.
 Again, these $\big(\sl(2)\ltimes \mathfrak{a}_{m}\big)$-modules
 can be viewed as $\big(\sl(2)\ltimes \mathfrak{h}_{n}\big)$-modules 
on which the center 
of $\sl(2)\ltimes \mathfrak{h}_{n}$ acts trivially. 

\subsection{Classification of all non-faithful 
\texorpdfstring{$\big(\sl(2)\ltimes \mathfrak{h}_n\big)$-modules}{}}
As we said at the beginning of the section, 
a uniserial 
representation of $\sl(2)\ltimes \mathfrak{h}_n$ is faithful 
if and only if $z$ acts non-trivially.
Thus, the 
non-faithful uniserial 
$\big(\sl(2)\ltimes \mathfrak{h}_n\big)$-modules
are in correspondence, via the projection
$\sl(2)\ltimes \mathfrak{h}_n\to \sl(2)\ltimes \mathfrak{a}_{m}$,
 with the uniserial representations 
of $\sl(2)\ltimes \mathfrak{a}_{m}$.
These were classified in \cite[Theorem 10.1]{CS_JofAlg}, 
we summarize that result in the following theorem.

\begin{theorem}\label{thm.CS_Classification}
The following list describes all the isomorphism classes of non-faithful uniserial representations 
of $\sl(2)\ltimes \mathfrak{h}_n$.

\medskip

\noindent
\begin{tabular}{ll} 
Length 1. & $Z(a,0)=V(a)$, $a\ge0$ (here $\mathfrak{r}$ acts trivially). \\[2mm]
Length 2. & $E(a,b)$, with $a+b\equiv m\mod 2$ and 
 $0\le |a-b|\leq m\leq a+b$. \\[2mm]
Length 3. & $Z(a,2)$, $Z(a,2)^*$, $a\ge0$; and \\[1mm]
   & $E_3(c)$ with $c\equiv 2m \mod 4$ and $0\le c< 2m$. \\[2mm]
Length 4. & $Z(a,3)$, $Z(a,3)^*$, $a\ge0$; and \\[1mm]
   & $E_4(t)$, with $t\in\F$ (this exists only if $m\equiv 0\mod 4$). \\[2mm]
Length $\ell\geq 5$. & $Z(a,\ell-1)$, $Z(a,\ell-1)^*$, $a\ge0$.  \\[2mm]
\end{tabular}

\end{theorem}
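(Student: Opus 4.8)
The plan is to reduce to $\sl(2)\ltimes\mathfrak{a}_m$ and then classify the uniserial modules of that Lie algebra directly, by analyzing the $\sl(2)$-equivariant ``down-shifting'' maps that encode the action of the radical. First I would use the observation (already recorded in \S\ref{sec.3}) that a uniserial $\big(\sl(2)\ltimes\mathfrak{h}_n\big)$-module is non-faithful precisely when $z$ acts trivially, and hence factors through the quotient $\big(\sl(2)\ltimes\mathfrak{h}_n\big)/\F z\simeq\sl(2)\ltimes\mathfrak{a}_m$. Thus it suffices to classify the uniserial $\big(\sl(2)\ltimes\mathfrak{a}_m\big)$-modules, where $\mathfrak{r}=\mathfrak{a}_m\simeq V(m)$ is abelian and, since $V(m)$ is a nontrivial irreducible $\sl(2)$-module, $[\sl(2),\mathfrak{r}]=\mathfrak{r}$.

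Given such a uniserial module $V$ of composition length $\ell+1$, Lemma \ref{lemma:soc} yields an $\sl(2)$-stable socle decomposition $V=V(a_0)\oplus\cdots\oplus V(a_\ell)$ with $\text{soc}(V)=V(a_0)$ and $\mathfrak{r}\,V(a_j)\subset V(a_0)\oplus\cdots\oplus V(a_{j-1})$. The action of $\mathfrak{r}$ is then recorded by its components $\phi_{ji}\colon V(m)\to\text{Hom}(V(a_j),V(a_i))$ for $i<j$. By \eqref{eq.Hom} and Schur's lemma each of these spaces is at most one-dimensional, and it is nonzero exactly when $\tfrac m2,\tfrac{a_i}2,\tfrac{a_j}2$ satisfy the triangle condition; hence every $\phi_{ji}$ is a scalar multiple of the canonical map \eqref{eq.actionV(m)}, and $V$ is determined by these scalars together with the sequence $(a_0,\dots,a_\ell)$. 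The parity constraint $a_i+a_j\equiv m\bmod 2$ is forced by the requirement that $\tfrac m2+\tfrac{a_i}2+\tfrac{a_j}2$ be an integer. After rescaling the $\sl(2)$-isotypic components one normalizes the adjacent maps $\phi_{j,j-1}$, so that continuous moduli can survive only where a nonadjacent component is genuinely free.

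The decisive input is the abelianness of $\mathfrak{r}$: the relations $[e_s,e_t]=0$, applied to each $V(a_j)$, become a system of quadratic equations in these scalars whose coefficients are antisymmetrized products of Clebsch-Gordan coefficients, equivalently controlled by the vanishing of the associated $6j$-symbols. For lengths $1$ and $2$ there are no such relations, giving immediately $Z(a,0)=V(a)$ and $E(a,b)$ with $0\le|a-b|\le m\le a+b$. For length $\ge 3$ I would argue that uniseriality forces every adjacent map $\phi_{j,j-1}$ to be nonzero, and then show that the commutativity relations admit a nonzero solution only in the listed configurations: the monotonic step-$m$ chains $Z(a,\ell-1)$ and their duals $Z(a,\ell-1)^*$, together with the sporadic $E_3(c)$ (length $3$) and $E_4(t)$ (length $4$). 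The exceptional arithmetic conditions $c\equiv 2m\bmod 4$ with $0\le c<2m$, and $m\equiv 0\bmod 4$ for $E_4(t)$, should emerge as exactly the congruences under which the relevant $6j$-symbol vanishes, permitting a consistent extension past length $2$ (one-parameter in the $E_4$ case, via the free bent arrow) without falling into type $Z$.

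The main obstacle is precisely this last step: carrying out the case analysis of the quadratic commutativity system and proving that, for chains of length $\ge 3$, the only sequences $(a_0,\dots,a_\ell)$ supporting a nonzero solution are the monotonic ones with $|a_i-a_{i-1}|=m$ plus the two sporadic families. This requires the explicit identities among the Clebsch-Gordan coefficients of \eqref{eq.extremoCG0}--\eqref{eq.extremoCG} and a precise description of which $6j$-symbols vanish; in particular, ruling out longer non-type-$Z$ chains for $\ell\ge 4$ and pinning down the exact congruences for $E_3(c)$ and $E_4(t)$ is the delicate combinatorial heart of the argument. Finally I would check that the listed modules are pairwise non-isomorphic and exhaust all cases, comparing their socle decompositions as \emph{ordered} sequences (Definition \ref{rmk:order}) and using $E(a,b)^*\simeq E(b,a)$ to organize the dual families.
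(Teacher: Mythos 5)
This theorem is not proved in the present paper at all: it is quoted verbatim from \cite[Theorem 10.1]{CS_JofAlg}, so there is no in-paper argument to compare against. Your outline does, however, reproduce the strategy of that source faithfully: the reduction of the non-faithful case to $\sl(2)\ltimes\mathfrak{a}_m$ via triviality of the centre, the observation that each component $V(m)\to\mathrm{Hom}(V(a_j),V(a_i))$ lives in an at most one-dimensional space (so the module is encoded by a sequence of highest weights and a collection of scalars, with the parity and triangle conditions forced), the normalization of adjacent maps, and the passage from $[e_s,e_t]=0$ to quadratic relations among these scalars whose coefficients are governed by the vanishing of $6j$-symbols. The length-$1$ and length-$2$ cases do come out for free exactly as you say, and the remark that uniseriality forces every adjacent map to be nonzero is correct and is the right lever for lengths $\ge 3$.

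The gap is that everything you call ``the delicate combinatorial heart'' is precisely the content of the theorem, and you do not carry it out. Concretely: (a) you must show that for a length-$3$ chain $V(a_0)\oplus V(a_1)\oplus V(a_2)$ with both adjacent maps nonzero and the nonadjacent component possibly nonzero, the commutativity relations force either $|a_1-a_0|=|a_2-a_1|=m$ with the steps in the same direction (type $Z$), or $(a_0,a_1,a_2)=(0,m,c)$ with $c\equiv 2m\bmod 4$, $0\le c<2m$ (and its dual); this is a genuine computation with the $6j$-symbol $\{\,\frac m2\,\frac m2\,\frac{c'}2;\frac{a_2}2\,\frac{a_0}2\,\frac{a_1}2\}$ summed over the intermediate weights $c'$ in $\Lambda^2V(m)$, and the congruence $c\equiv 2m\bmod 4$ has to be extracted from an explicit zero of such a symbol, not merely asserted to ``emerge.'' (b) You must then rule out all non-type-$Z$ continuations at length $4$ except $E_4(t)$ (including showing why $m\equiv 0\bmod 4$ is needed and why the parameter $t$ is not removable by rescaling), and (c) show no exceptional module extends to length $5$. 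None of these steps is routine, and without them the proposal is a correct plan rather than a proof. If your intent is to cite \cite{CS_JofAlg} for these computations, that matches what the paper itself does; if the intent is to give a self-contained proof, the case analysis must actually be written out.
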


\subsection{Faithful 
\texorpdfstring{$\big(\sl(2)\ltimes \mathfrak{h}_n\big)$}{}-modules}\label{subsec.Faithful modules}
The faithful uniserial $\big(\sl(2)\ltimes \mathfrak{h}_n\big)$-modules 
were classified, up to isomorphism, in \cite[Theorems 3.5 and 5.2]{CGS2}.
It turns out that there are no faithful uniserial 
$\big(\sl(2)\ltimes \mathfrak{h}_n\big)$-modules of composition length different from $3$.
 Moreover, if 
\[
V=V(a_0)\oplus V(a_1)\oplus V(a_2)
\]
 is socle decomposition (see Definition \ref{rmk:order}) of a faithful uniserial 
 $\big(\sl(2)\ltimes \mathfrak{h}_n\big)$-module, then $a_0=a_2$ and
 an explicit representative of each class can be 
 obtained by conveniently combining the modules $E(a,b)$ for some specific values of $a$ and $b$ as we explain below.
 
Let us start with the $\sl(2)$-module
 $V=V(a_0)\oplus V(a_1)\oplus V(a_2)$ 
 with $a_2=a_0$ such that 
$\frac{m}2,\frac{a_0}2,\frac{a_1}2$ satisfy the triangle condition. 
We now indicate how to obtain an action of $\mathfrak{h}_n=\mathfrak{h}_n(m)\oplus \F z$
on $V$ so that $V$ becomes a faithful uniserial $\big(\sl(2)\ltimes \mathfrak{h}_n\big)$-module. 
Although we know that $a_2=a_0$ we keep the notation $a_2$ 
because we need to indicate that $V(a_0)$ is the socle of $V$ and  $V(a_2)$ corresponds to the third socle factor of $V$. 

First, let $\mathfrak{h}_n(m)$ act on $V$ as follows
\[
0\longleftarrow
V(a_0)\longleftarrow 
V(a_1)\longleftarrow
V(a_2)
\]
where the actions $V(a_1)\rightarrow V(a_0)$ and $V(a_2)\rightarrow V(a_1)$
 are given by 
 \eqref{eq.actionV(m)} (with $a=a_0$, $b=a_1$ and $a=a_1$, $b=a_2$ respectively).
This action of $\mathfrak{h}_n(m)$ on $V$ can be extended to 
$\mathfrak{h}_n$ only in the following cases.
In all of them, $a_0=a_2$ and $z$ acts as an $\sl(2)$-isomorphism $V(a_2)\to V(a_0)$.
\begin{enumerate}
\item[(i)] For $n=1$ (that is $m=1$), $(a_0,a_1,a_2)$ must be
\[
(a_0,a_0+1,a_0),\quad a_0\ge0;\qquad 
(a_0,a_0-1,a_0),\quad a_0\ge1. 
\]
Let us call, respectively, $FU_{a_0}^+$ and $FU_{a_0}^-$ 
the first and second $\big(\sl(2)\ltimes \mathfrak{h}_n\big)$-modules above. 

\medskip

\item[(ii)] For $n=2$ (that is $m=3$), 
$(a_0,a_1,a_2)$ must be 
\[
(0,3,0),\;
(1,4,1),\;
(1,2,1),\;
(4,3,4).
\]
We call these modules $FU_{(0,3,0)}$, 
$FU_{(1,4,1)}$, 
$FU_{(1,2,1)}$ and 
$FU_{(4,3,4)}$ respectively. 

\medskip

\item[(iii)] If $n\geq 3$ (that is $m\ge 5$), $(a_0,a_1,a_2)$ must be
\[
(0,m,0),\;
(1,m+1,1),\;
(1,m-1,1).
\]
We call these modules 
$FU_{(0,m,0)}$, 
$FU_{(1,m+1,1)}$ and 
$FU_{(1,m-1,1)}$ respectively. 
\end{enumerate}

In these modules, the action 
 of the center $\F z$ is given by
 \begin{align}\label{eq.action.V(0)}
z\,v_j^{a_2}=\begin{cases}
-\dfrac{2\sqrt{m+1}}{a+1} v_j^{a_0}, & 
\text{if 
$V=\left\{\begin{array}{l} 
FU_a^+\text{ and } m=1,\\
FU_{(0,3,0)},\; FU_{(1,4,1)}\text{ and } m=3, \\
FU_{(0,m,0)},\; FU_{(1,m+1,1)}\text{ and } m\ge 5.
\end{array}\right.$}
\vspace{3mm}\\
\dfrac{2\sqrt{m+1}}{a+1} v_j^{a_0}, & 
\text{if 
$V=\left\{\begin{array}{l} 
FU_a^-\text{ and } m=1,\\
FU_{(1,2,1)}\text{ and } m=3, \\
FU_{(1,m-1,1)}\text{ and } m\ge 5.
\end{array}\right.$}
\vspace{3mm}\\
-\dfrac{4}{5}v_j^{a_0}, & \text{if $V=FU_{(4,3,4)}\text{ and } m=3$.}
\end{cases}
\end{align}

Let us show more explicitly the $\left(\sl(2)\ltimes \mathfrak{h}_{2}\right)$-module
$FU_{(4,3,4)}$ (here $n=2$ and thus $m=3$). 
Recall that $\{e,h,f,e_s:s=0\dots 3, z\}$ is a basis of 
$\sl(2)\ltimes \mathfrak{h}_{2}$.
The letters $e,h,f,e_s,z$ in the matrices below  
depict the 
non-zero entries of the corresponding basis element. 
The action of $\sl(2)\ltimes \mathfrak{h}_{2}$ on this module is represented 
by a block matrix
\[
\begin{pmatrix}
X_4 & H_1 & Z \\
 0  & X_3 & H_2 \\
 0  &  0  & X_4  
\end{pmatrix}
\]
where 
{\small
\[
X_4= 
\left( 
\begin {array}{ccccc} 
4\,h & 2\,e & 0 & 0 & 0\\ 
2\,f & 2\,h &\sqrt{6}\,e & 0 & 0 \\
0  & \sqrt{6}\,f & 0 & \sqrt{6}\,e & 0 \\
0 & 0 & \sqrt{6}\,f & -2\,h & 2\,e \\
0 & 0 & 0  & 2\,f & -4\,h
\end{array}\right),
\quad 
X_3= 
\left( 
\begin {array}{cccc} 
3\,h & \sqrt{3}\,e & 0 & 0 \\ 
\sqrt{3}\,f & h &2\,e & 0  \\
 0 & 2\,f & -h & \sqrt{3}\,e \\
 0 & 0  & \sqrt{3}\,f & -3\,h
\end{array}\right),  
\]
\begin{align*}
H_1&= 
\left( 
\begin{array}{cccc}
\frac15\,\sqrt{10}\,{e_1} & -\frac15\,\sqrt{10}\,{e_0} & 0 & 0  \\[2mm] 
  \frac15\,\sqrt{10}\,{e_2} & 0 & -\frac15\,\sqrt{10}\,{e_0}& 0  \\[2mm] 
  \frac15\,\sqrt{5}\,{e_3} & \frac15\,\sqrt{5}\,{e_2} & -\frac15\,\sqrt{5}\,{e_1} & -\frac15\,\sqrt{5}\,{e_0} \\[2mm]
0 & \frac15\,\sqrt{10}\,{e_3} & 0 & -\frac15\,\sqrt{10}\,{e_1} \\[2mm]
0 & 0 & \frac15\,\sqrt{10}\,{e_3} & -\frac15\,\sqrt{10}\,{e_2} 
\end{array} \right), \\[2mm]
H_2&= 
\left( 
\begin{array}{ccccc}
\frac15\,\sqrt{10}\,{e_2} & -\frac15\,\sqrt{10}\,{e_1} & \frac15\,\sqrt{5}\,{e_0} & 0 & 0\\[2mm] 
\frac15\,\sqrt{10}\,{e_3} & 0 & -\frac15\,\sqrt{5}\,{e_1} & \frac15\,\sqrt{10}\,{e_0} & 0 \\[2mm]
0  & \frac15\,\sqrt{10}\,{e_3} & -\frac15\,\sqrt{5}\,{e_2} & 0 & \frac15\,\sqrt{10}\,{e_0} \\[2mm]
 0  & 0 & \frac15\,\sqrt{5}\,{e_3} & -\frac15\,\sqrt{10}\,{e_2} & \frac15\,\sqrt{10}\,{e_1}
\end{array} \right),
\end{align*} }

\noindent
and $Z=-\frac{4}{5}\,z\,I_{5}$ (here $I_5$ is the identity matrix of size $5$). 
This module is exceptionally rare: it is remarkable that the bracket of any matrix
of type $H_1$ with a matrix of type $H_2$ produces a multiple of the identity. 
This is related to some non-trivial zeros of the $6j$-symbols 
(see \cite{CS_JofAlg}) and exceptional zeros of the Clebsch-Gordan coefficientes (see for instance the entries (2,2) and (4,3) of $H_1$).

The following theorem summarizes this information and was proved in \cite{CGS2}.

\begin{theorem}\label{thm.CS2_Classification}
The following list describes all the isomorphism classes of faithful uniserial representations 
of $\sl(2)\ltimes \mathfrak{h}_n$.
\begin{enumerate}
\item[(i)] For $n=1$: $FU_a^+$, $a\ge0$, and $FU_a^-$, $a\ge1$.
\item[(ii)] For $n=2$: $FU_{(0,3,0)}$, 
$FU_{(1,4,1)}$, 
$FU_{(1,2,1)}$ and 
$FU_{(4,3,4)}$.
\item[(iii)] For $n\geq 3$: 
$FU_{(0,m,0)}$, 
$FU_{(1,m+1,1)}$ and 
$FU_{(1,m-1,1)}$ ($m=2n-1$).
\end{enumerate}
Each of these modules is isomorphic to its own dual.
\end{theorem}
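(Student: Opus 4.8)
The plan is to reconstruct the faithful uniserial modules directly from the interplay between the socle filtration and the Heisenberg relation $[e_i,e_{m-i}]=c_i\,z$ (with $c_i=(-1)^i\sqrt{1/(m+1)}$ as in \eqref{relacion-V(m)-V(0)}), and then to match the resulting data against the non-faithful classification of Theorem~\ref{thm.CS_Classification}. Write the socle decomposition (Definition~\ref{rmk:order}) as $V=V_1\oplus\dots\oplus V_L$ with $V_i\simeq V(a_{i-1})$ as $\sl(2)$-modules. Exactly as for $\sl(2)\ltimes\mathfrak{a}_m$, the subspace $\mathfrak{h}_n(m)=[\sl(2),\mathfrak{h}_n]$ shifts the filtration down by one, $\mathfrak{h}_n(m)\,\text{soc}^i\subseteq\text{soc}^{i-1}$ (this follows in the spirit of Lemma~\ref{lemma:soc}, applied to the ideal $[\sl(2),\mathfrak{h}_n]$), and uniseriality forces each connecting map $V_i\to V_{i-1}$ to be nonzero; by Schur and \eqref{eq.tensor} this requires $\tfrac m2,\tfrac{a_{i-1}}2,\tfrac{a_i}2$ to satisfy the triangle condition and $a_{i-1}+a_i\equiv m\pmod 2$, and it fixes the connecting map up to scalar as in \eqref{eq.actionV(m)}.

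The first key step is to locate the central element $z$. Since $z$ spans the center of $\sl(2)\ltimes\mathfrak{h}_n$, the operator $\rho(z)$ is a $\big(\sl(2)\ltimes\mathfrak{h}_n\big)$-endomorphism of $V$ and, lying in the nilradical, it acts nilpotently. Writing $\rho(z)=c_i^{-1}\bigl[\rho(e_i),\rho(e_{m-i})\bigr]$ and using that each $\rho(e_s)$ lowers the filtration by one, a direct computation shows that $\rho(z)$ annihilates $V_1\oplus V_2$ and maps $V_i$ into $\text{soc}^{i-2}$. Faithfulness means $\rho(z)\neq 0$, so $L\le 2$ is impossible. When $L=3$ this already pins down the $\sl(2)$-shape: $\rho(z)$ kills $V_1\oplus V_2$ and restricts to a nonzero $\sl(2)$-map $V(a_2)\to V(a_0)$, which by Schur is an isomorphism, whence $a_2=a_0$ and $z$ acts as a nonzero scalar multiple of the canonical isomorphism $V(a_2)\to V(a_0)$, matching \eqref{eq.action.V(0)}.

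The heart of the argument, and the step I expect to be the main obstacle, is the explicit evaluation of $\rho(z)$ as a composition of two connecting maps. Restricting $c_i^{-1}\bigl[\rho(e_i),\rho(e_{m-i})\bigr]$ to the top layer $V(a_2)$ and inserting \eqref{eq.actionV(m)} twice expresses its (Schur) scalar as a sum of products of four Clebsch--Gordan coefficients, that is, a $6j$-type quantity depending on $(a_0,a_1)$ and on $i$. Requiring this scalar to be nonzero (faithfulness) and independent of $i$, so that a single operator $z$ satisfies all the relations \eqref{relacion-V(m)-V(0)} simultaneously, is a rigid condition on the integers $a_0,a_1,m$. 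Carrying out the same computation with $L\ge 4$ layers yields an overdetermined system: the values of $\rho(z)$ forced on $V_i\to V_{i-2}$ and on $V_{i+1}\to V_{i-1}$ cannot be reconciled without either breaking the periodicity $a_{i-1}=a_{i-3}$ demanded by the previous paragraph or forcing a connecting map to vanish, contradicting uniseriality. This rules out every composition length except $3$, and solving the length-$3$ scalar identity in the Clebsch--Gordan coefficients, using the symmetry relations \eqref{eq:swap} and \eqref{eq.simmetryCG} to reduce it to a closed form, singles out exactly the triples $(a_0,a_1,a_2)$ listed in (i)--(iii) together with the normalizing scalars recorded in \eqref{eq.action.V(0)}.

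Finally, I would establish self-duality. The dual $V^*$ carries the reversed socle decomposition $V(a_2)^*\oplus V(a_1)^*\oplus V(a_0)^*$, and since $V(a)\simeq V(a)^*$ via \eqref{eq.dual} and $a_0=a_2$, its underlying $\sl(2)$-module agrees with that of $V$. It then remains to verify that the connecting maps and the action of $z$ transported to $V^*$ coincide, up to an $\sl(2)$-automorphism of each layer, with those of $V$; this is a bookkeeping computation using \eqref{eq.dual}, \eqref{eq:swap} and \eqref{eq.simmetryCG}, which identifies $V^*$ with the same member of the list and hence shows that each module is isomorphic to its own dual.
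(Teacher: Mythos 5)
The paper does not actually prove this theorem: it is quoted from \cite{CGS2} (Theorems 3.5 and 5.2 there), and \S\ref{subsec.Faithful modules} only records the resulting list and the formula \eqref{eq.action.V(0)}. So your proposal is being measured against the external classification proof rather than anything in this text. Your opening reductions are sound and do follow the expected route: $\rho(z)$ is a nilpotent $\g$-endomorphism, it lowers the socle filtration by two, faithfulness forces length $\ge 3$, and in length $3$ Schur's lemma gives $a_2=a_0$ with $z$ acting as a scalar isomorphism $V(a_2)\to V(a_0)$.

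However, there is a genuine gap, and it sits exactly where you flag ``the main obstacle'': you never carry out the evaluation of $c_i^{-1}[\rho(e_i),\rho(e_{m-i})]$ as a $6j$-type scalar, nor the analysis of when that scalar is nonzero and independent of $i$. That computation is the entire content of the theorem --- it is what produces the sharp trichotomy between $n=1$ (two infinite families), $n=2$ (exactly four modules, including the exceptional $FU_{(4,3,4)}$ with $a_0=4$, which your ``rigid condition'' would have to detect as an isolated solution), and $n\ge 3$ (exactly three modules, all with $a_0\in\{0,1\}$). An outline that says ``solving the scalar identity singles out exactly the triples listed'' assumes the conclusion; nothing in your argument explains why $(4,3,4)$ survives only for $m=3$ or why no triple with $a_0\ge 2$ survives for $m\ge 5$. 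Two further points are missing: (1) the representation must also satisfy $[\rho(e_i),\rho(e_j)]=0$ for all $i+j\ne m$, which is an additional system of constraints on the pair of connecting maps, not implied by the relations you list; (2) the exclusion of composition length $\ge 4$ is asserted via an ``overdetermined system'' but not argued --- a priori one could imagine $z$ acting as $V_i\to V_{i-2}$ nontrivially on several layers with a periodic shape $a_i=a_{i-2}$, and ruling this out requires an actual argument (in \cite{CGS2} this is a separate, nontrivial step). The self-duality claim at the end is plausible and its strategy is correct, but it too is deferred to ``a bookkeeping computation'' that is not performed and that must use the explicit scalars of \eqref{eq.action.V(0)}.
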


We close this section pointing out that
all the faithful uniserial $\big(\sl(2)\ltimes \mathfrak{h}_n\big)$-modules, except $FU_{(4,3,4)}$, belong, in some sense, to the same kind of modules (we will say something more about this after Conjecture \ref{conj:length2}). 
Therefore, we introduce the following definition.
\begin{definition}
All faithful uniserial $\big(\sl(2)\ltimes \mathfrak{h}_n\big)$-modules that are not isomorphic to $FU_{(4,3,4)}$ will be referred to as 
\emph{standard faithful uniserials}.
\end{definition}

\section{The tensor product of two uniserial 
\texorpdfstring{$\big(\sl(2)\ltimes \mathfrak{h}_n\big)$-modules}{}}
\label{sec.4}

In \cite[Theorem 3.5]{CGR} we obtained the $\sl(2)$-module structure of the socle of the tensor product of two uniserial $\big(\sl(2)\ltimes \mathfrak{a}_m\big)$-modules of type $Z$. 
Therefore (see \S\ref{sec.3}), we already know the $\sl(2)$-module structure of the socle of the tensor product $V_1\otimes V_2$ of two uniserial $\big(\sl(2)\ltimes \mathfrak{h}_n\big)$-modules in the cases where $V_1$ and $V_2$ are non-faithful of type $Z$.
We summarize this in Theorem \ref{thm:main} below.

In this section we obtain 
the $\sl(2)$-module structure of the socle of the tensor product 
$V_1\otimes V_2$ when both $V_1$ and $V_2$ are standard faithful uniserial modules, 
or when one of them is standard faithful and the other one is uniserial of type $Z$.
From this, we derive a complete description of the space of 
intertwining operators 
$\text{Hom}_{\sl(2)\ltimes \mathfrak{h}_n}(V_1, V_2)$ in all
these cases.

\subsection{The non-faithful case and the crucial conjecture}
Let us recall some of the results obtained in \cite{CGR}, mainly  
Conjecture 3.4 and Theorem 3.5 that describes 
the $\sl(2)$-module structure of the socle of the tensor product of two uniserial $\big(\sl(2)\ltimes \mathfrak{a}_m\big)$-modules of type $Z$. 

If $U$ is a $\big(\sl(2)\ltimes \mathfrak{h}_n\big)$-module, 
since $\mathfrak{h}_n=[\sl(2),\mathfrak{h}_n]$,
it follows from Lemma \ref{lemma:soc} that 
\begin{equation}\label{eq.soc=invariants}
\text{soc}(U)=U^{\mathfrak{h}_n}.
\end{equation}
Therefore, if
$V=V(a_0)\oplus \hdots \oplus V(a_{\ell})$ and 
$W=V(b_0)\oplus \hdots \oplus V(b_{\ell'})$
are the socle decomposition of two
$\big(\sl(2)\ltimes \mathfrak{h}_n\big)$-modules,
then
\begin{align}
\text{soc}(V\otimes W)&= \bigoplus_{t=0}^{\ell+\ell'}\left(\text{soc}(V\otimes W)\cap \bigoplus_{i+j=t} V(a_i)\otimes V(b_j) \right)\notag \\
&= \bigoplus_{t=0}^{\ell+\ell'}\left(\bigoplus_{i+j=t}V(a_i)\otimes V(b_j) \right)^{\mathfrak{h}_n}.
\end{align}
For $t=0,\hdots,\ell+\ell'$, we define
\begin{equation*}
S_t=S_t(V,W)=\left(\bigoplus_{i+j=t}V(a_i)\otimes V(b_j) \right)^{\mathfrak{h}_n}.
\end{equation*}
Hence, as $\sl(2)$-modules,
\begin{equation*}
\text{soc}(V\otimes W)
=\bigoplus_{t=0}^{\ell+\ell'}S_t 
\end{equation*}
and 
\[
S_0=\text{soc}(V)\otimes \text{soc}(W)=V(a_0)\otimes V(b_0).
\]

The following theorem is the same as \cite[Theorem 3.5]{CGR} but stated in terms of non-faithful uniserial 
$\big(\sl(2)\ltimes \mathfrak{h}_n\big)$-modules.

\begin{theorem}\label{thm:main}
Let $V_1=V(a_0)\oplus \hdots \oplus V(a_\ell)$ and $V_2=V(b_0)\oplus \hdots \oplus V(b_{\ell'})$ be socle decomposition of two non-faithful uniserial $\big(\sl(2)\ltimes \mathfrak{h}_n\big)$-modules of type $Z$. Then,
$S_t=0$ for all $t>\min\{\ell,\ell'\}$,
\[
S_0\simeq \bigoplus_{k=0}^{\min\{a_0,b_0\}}V(a_0+b_0-2k),
\]
and, for $t=1,\dots,\min\{\ell,\ell'\}$, we have
\begin{enumerate}
\item[(i)] If $V_1=Z(a_0,\ell)$ and $V_2=Z(b_0,\ell')$, then
\[
S_t \simeq V(a_0+b_0+tm).
\]

\vspace{3mm}

\item[(ii)] If $V_1=Z(a_0,\ell)$ and $V_2=Z(b_{\ell'},\ell')^*$, then 
\[
S_t \simeq 
\begin{cases}
0, & \text{if $tm>b_0-a_0$;} \\
V(b_0-a_0-tm), & \text{if $tm\le b_0-a_0$.}
\end{cases}
\]
\vspace{3mm}

\item[(iii)] If $V_1=Z(a_\ell,\ell)^*$ and $V_2=Z(b_{\ell'},\ell')^*$, then 
$S_t=0$.
\end{enumerate}
\end{theorem}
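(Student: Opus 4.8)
The plan is to compute each $S_t$ directly as a space of $\mathfrak h_n$-invariants, exploiting that for non-faithful modules $z$ acts as $0$, so $\mathfrak r=\mathfrak h_n(m)\simeq V(m)$ acts as an abelian Lie algebra and, by \eqref{eq.soc=invariants}, $S_t=\big(\bigoplus_{i+j=t}V(a_i)\otimes V(b_j)\big)^{\mathfrak r}$. Since the whole computation is $\sl(2)$-equivariant, I would work one $\sl(2)$-isotypic component at a time: a copy of $V(c)$ contributes to $S_t$ precisely when there is an $\sl(2)$-highest weight vector $w$ of weight $c$, lying in the degree-$t$ piece, with $e_s\,w=0$ for all $s=0,\dots,m$. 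The first step is to record, in each of the three cases, the exponents $a_i=a_0\pm im$ and $b_j=b_0\pm jm$ and to observe which extremal weight of $V(a_i)\otimes V(b_j)$ is constant along the anti-diagonal $i+j=t$: in case (i) the top weight $a_i+b_j=a_0+b_0+tm$ is constant, in case (ii) the bottom weight $|a_i-b_j|=|a_0-b_0+tm|$ is constant, and in case (iii) neither is.

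The core computation is the one for this extremal component. In case (i) the only vectors of weight $a_0+b_0+tm$ in degree $t$ are the tensors $v_0^{a_i}\otimes v_0^{b_j}$ of the two $\sl(2)$-highest weight vectors, and formula \eqref{eq.actionV(m)1} shows that $e_s$ annihilates each of them for $s<m$, because the $\mathfrak r$-action on $Z(a_0,\ell)$ and $Z(b_0,\ell')$ lowers the $\sl(2)$-weight by exactly $m$ (so there is no room for the weight $a_i+m-2s$ inside the target). Hence invariance of $w=\sum_{i+j=t}c_{i,j}\,v_0^{a_i}\otimes v_0^{b_j}$ reduces to the single equation $e_m\,w=0$; collecting the coefficient of each $v_0^{a_p}\otimes v_0^{b_q}$ in degree $t-1$ turns this into the two-term recursion $\lambda_{p+1}c_{p+1,q}+\mu_{q+1}c_{p,q+1}=0$ along the anti-diagonal, where $\lambda,\mu$ are the nonzero Clebsch-Gordan coefficients appearing in \eqref{eq.actionV(m)}. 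Since these coefficients never vanish, the recursion propagates a single free parameter, and the boundary conditions $c_{i,j}=0$ for $i>\ell$ or $j>\ell'$ make the solution space exactly one-dimensional when $1\le t\le\min\{\ell,\ell'\}$ and force it to be zero otherwise; this simultaneously yields the copy $V(a_0+b_0+tm)$ and the vanishing for $t>\min\{\ell,\ell'\}$. In case (ii) I would run the identical argument on the \emph{lowest} Clebsch-Gordan component, whose highest weight vector is governed by the special value \eqref{eq.extremoCG}, the extra constraint $tm\le b_0-a_0$ being exactly the condition that this bottom weight equal $b_0-a_0-tm\ge 0$. In case (iii) the action of $\mathfrak r$ now \emph{raises} the $\sl(2)$-weight by $m$ on each factor, so already $e_0$ (and in fact every $e_s$) acts nontrivially on the $v_0^{a_i}\otimes v_0^{b_j}$; the resulting system is over-determined and, checking that its boundary terms force $c_{i,j}=0$, one obtains no invariant.

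It remains to prove that no \emph{other} $\sl(2)$-type survives, and this is where I expect the real difficulty to lie. For a non-extremal weight $c$ the candidate highest weight vectors are genuine Clebsch-Gordan combinations spread over several summands $V(a_i)\otimes V(b_j)$, and the $e_s$ no longer annihilate them for free, so one must show that the full system $\{e_s w=0:0\le s\le m\}$ has only the trivial solution, i.e.\ that the $\mathfrak r$-action is injective on these isotypic highest weight vectors. I would set this up as a linear system whose entries are the coefficients \eqref{eq.extremoCG0}--\eqref{eq.extremoCG} and establish maximal rank using their explicit product-of-factorials form together with the symmetry relations \eqref{eq:swap} and \eqref{eq.simmetryCG}; a convenient reduction is to apply the weight-raising element $e_0$ first, since for non-extremal $c$ the image $e_0w$ has weight $c+m$ still fitting inside degree $t-1$, so injectivity of $e_0$ alone already eliminates many types. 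Finally, the case $t=0$ needs no work: $\mathfrak r$ kills both socles $V(a_0)$ and $V(b_0)$, so $S_0=V(a_0)\otimes V(b_0)$ and the stated decomposition is just the Clebsch-Gordan formula \eqref{eq.tensor}.
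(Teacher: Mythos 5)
Your framework is sound and parts of it do work: the reduction to $\mathfrak r$-invariants, the identification $S_0=V(a_0)\otimes V(b_0)$, and the analysis of the top-weight component in case (i) are all correct. There, the vectors of weight $a_0+b_0+tm$ in degree $t$ really are the $v_0^{a_i}\otimes v_0^{b_j}$, formula \eqref{eq.actionV(m)1} does give $e_sv_0^{a_i}=0$ for $s<m$ and a single nonzero term for $s=m$, and your two-term recursion with nonvanishing Clebsch--Gordan coefficients correctly produces a one-dimensional solution space for $1\le t\le\min\{\ell,\ell'\}$ and zero beyond. The genuine gap is that this only settles one isotypic component, whereas the theorem asserts the vanishing of \emph{all} the others (and, in cases (ii)--(iii), of the extremal one as well in certain ranges). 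You explicitly defer this to ``set up a linear system \ldots and establish maximal rank,'' but that unperformed computation \emph{is} the theorem: it is exactly the content of Theorem \ref{prop.conjecture} and of \S\ref{sec.proof_conj}, where one must expand $e_sv_0^{a,d,\mu}$ and $e_sv_0^{b,c,\mu}$ via \eqref{eq:esvad}--\eqref{eq:esvbc}, simplify the factorials, and exhibit a specific $s$ for which the two vectors are visibly independent. The route the paper (following \cite{CGR}) actually takes is: first a structural step (Lemma \ref{lemma:action neq} and Proposition \ref{prop:Zocalo_t}) showing that any highest weight vector of $S_t$ must be supported on the \emph{entire} anti-diagonal with all coefficients nonzero --- which by itself yields $S_t=0$ for $t>\min\{\ell,\ell'\}$ for every $\mu$, not just the top one --- and which reduces everything to $S_1$ of consecutive length-$2$ pairs $E(a_i,a_{i+1})$, $E(b_{j-1},b_j)$; and second, the explicit length-$2$ Clebsch--Gordan computation. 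Your proposal has neither the structural lemma nor the completed computation.

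Your case (ii) is the clearest illustration of why the gap matters. The highest weight vector of the bottom component of $V(a_i)\otimes V(b_j)$ is the full sum \eqref{eq.extremoCG}, not a product of extremal vectors, and no $e_s$ annihilates it ``for free,'' so the invariance condition is a genuine system over all $s=0,\dots,m$ and the argument is not ``identical'' to case (i). Moreover the dichotomy is not the formality you suggest: when $tm>b_0-a_0$ each summand on the anti-diagonal still contains a bottom component of weight $a_0-b_0+tm\ge 0$, and one must prove by computation that no combination of their highest weight vectors is killed by every $e_s$ (this is precisely the ``$p<a$'' linear-independence step in \S\ref{sec.proof_conj}(i)). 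A small additional slip: in case (iii) the top weight $a_i+b_j=a_0+b_0-tm$ \emph{is} constant along the anti-diagonal; the relevant obstruction there is instead that $e_sv_0^{a_i}\ne 0$ for every $s$, which you do note, but again only for the extremal component.
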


One of the main steps towards proving the above theorem 
was to prove certain instances of the following conjecture
(see \cite[Conjecture 3.4]{CGR}).

\begin{conj}\label{conj:length2}
 Let 
$V_1=E(a,b)$ and 
$V_2=E(c,d)$ (two uniserial $\sl(2)\ltimes\mathfrak{a}_m$-modules of length 2)
and assume that $a<c$, or
$a=c$ and $b\le d$. 
Then $S_2=0$ in all cases and $S_1=0$ except in the following cases. 
\begin{enumerate}[\hspace{3mm}]
\item[\tiny$\bullet$]
Case 1: $[a,b]=[0,m]$. Here $S_1\simeq V(d)$. 
\item[\tiny$\bullet$] Cases 2: Here $a>0$.
\begin{enumerate}[\hspace{3mm}]
\item[--] Case 2.1: $a+b=c+d=m$ with $d-a=b-c\ge0$. 
Here $S_1\simeq V(d-a)$.
\item[--] Case 2.2: $b-a=d-c=m$. Here $S_1\simeq V(d+a)$.
\item[--] Case 2.3: $b-a=c-d=m$ with $d-a=c-b\ge0$. 
Here $S_1\simeq V(d-a)$.
 \end{enumerate}
\item[\tiny$\bullet$] Case 3: $[c,d]=[b,a]$. Here $S_1\simeq V(0)$.
 \end{enumerate}
 \end{conj}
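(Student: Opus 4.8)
The plan is to reduce the whole computation of the socle to linear algebra over the Clebsch--Gordan coefficients. Since $V_1,V_2$ are non-faithful, $z$ acts trivially and $\mathfrak{r}=\mathfrak h_n(m)\simeq V(m)$ acts on each factor through the length-$2$ structure \eqref{eq.actionV(m)}; by \eqref{eq.soc=invariants} we have $\text{soc}(V_1\otimes V_2)=(V_1\otimes V_2)^{\mathfrak r}$. Grade $V_1\otimes V_2$ by the $(i,j)$-blocks $V(a_i)\otimes V(b_j)$ with $a_0=a$, $a_1=b$, $b_0=c$, $b_1=d$. Because $e_s$ kills the socle factors $V(a)$ and $V(c)$, the operator $e_s\otimes 1$ lowers the first index by one and $1\otimes e_s$ lowers the second, so the $\mathfrak r$-action strictly lowers $t=i+j$. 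Hence $S_0$ is all of $V(a)\otimes V(c)$; $S_1$ is the space of invariants inside the $(0,1)$-block $V(a)\otimes V(d)$ plus the $(1,0)$-block $V(b)\otimes V(c)$; and $S_2$ is the space of invariants inside the $(1,1)$-block $V(b)\otimes V(d)$.

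For $S_2$ I would argue directly. The two components $(e_s\otimes 1)\xi$ and $(1\otimes e_s)\xi$ of $e_s\xi$ land in the distinct blocks $V(a)\otimes V(d)$ and $V(b)\otimes V(c)$, so invariance forces both to vanish; in particular $(e_s\otimes 1)\xi=0$ for all $s$, i.e. $\xi\in (V(b))^{\mathfrak r}\otimes V(d)$. But in $E(a,b)$ the top factor $V(b)$ has no nonzero $\mathfrak r$-invariant vectors, since the coaction $V(b)\to V(m)\otimes V(a)$ dual to \eqref{eq.actionV(m)} is a nonzero, hence injective, $\sl(2)$-map. Therefore $\xi=0$ and $S_2=0$ in all cases, with no Clebsch--Gordan computation required.

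For $S_1$ the two blocks are genuinely coupled, because both $(1\otimes e_s)$ on $V(a)\otimes V(d)$ and $(e_s\otimes 1)$ on $V(b)\otimes V(c)$ land in the same block $V(a)\otimes V(c)$. By the argument above neither $\xi_1\in V(a)\otimes V(d)$ nor $\xi_2\in V(b)\otimes V(c)$ can be invariant alone, so any invariant is a genuine cancellation $e_s\xi_1=-e_s\xi_2$ in $V(a)\otimes V(c)$ for all $s$. A necessary condition is then that a constituent $V(e)$ occur in \emph{both} $V(a)\otimes V(d)$ and $V(b)\otimes V(c)$, which by \eqref{eq.tensor} already eliminates most $e$. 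For such an $e$, bundling over $s$ gives two $\sl(2)$-equivariant images of $V(e)$ inside $V(m)\otimes (V(a)\otimes V(c))$, whose coordinates in the (possibly multiple) copies of $V(e)$ there are, by the standard recoupling identity, $6j$-symbols; the copy of $V(e)$ survives in $S_1$ precisely when these two tuples of $6j$-symbols are proportional so that they can cancel. I would use the self-duality $E(a,b)^*\simeq E(b,a)$ together with the normalization $a<c$, or $a=c$ and $b\le d$, to reduce the number of configurations, and evaluate the relevant symbols with \eqref{eq.extremoCG0}--\eqref{eq.extremoCG}: the degenerate Case $1$ ($[a,b]=[0,m]$, one factor being the adjoint-type block) and Case $3$ ($[c,d]=[b,a]$, so $V_2\simeq V_1^*$ and the surviving $V(0)$ is the evaluation pairing) are checked by hand, while the aligned Cases $2.1$--$2.3$ are exactly the situations where $\{a,b\}$ and $\{c,d\}$ are separated by $m$ compatibly and the pertinent $6j$-symbol has a zero, producing the asserted single $V(d-a)$, $V(d+a)$ or $V(d-a)$.

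The hard part is the last step: proving that \emph{outside} the listed configurations the two $6j$-tuples are never proportional, equivalently that the associated matrices have full rank. Characterizing when a $6j$-symbol vanishes---in particular the ``nontrivial'' zeros not forced by the triangle inequalities---is genuinely delicate and resists a uniform treatment, which is why the statement remains a conjecture. My expectation is that the specific instances needed downstream can be settled by an explicit but lengthy determinant computation with the Clebsch--Gordan formulas above, establishing Theorem \ref{prop.conjecture} while deferring the complete characterization of $6j$-zeros to a separate technical section.
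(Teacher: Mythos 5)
The statement you were asked to prove is presented in the paper as a \emph{conjecture}: the paper itself establishes only cases 2.2 and 2.3 (in \cite{CGR}) and the portions of cases 1 and 2.1 recorded in Theorem \ref{prop.conjecture}, whose proof in \S\ref{sec.proof_conj} follows exactly the strategy you outline --- reduce the socle to $\mathfrak{h}_n$-invariants block by block, observe that a highest weight $\mu$ of $S_1$ must occur in both $V(a)\otimes V(d)$ and $V(b)\otimes V(c)$, and then test whether the tuples $\bigl(e_s v_0^{a,d,\mu}\bigr)_{s}$ and $\bigl(e_s v_0^{b,c,\mu}\bigr)_{s}$ are proportional by explicit Clebsch--Gordan evaluations (your $6j$-symbol packaging is an equivalent reformulation of the paper's direct products of Clebsch--Gordan coefficients, and the paper itself remarks that the zeros of $6j$-symbols govern these constructions). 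Your argument that $S_2=0$, via injectivity of the nonzero $\sl(2)$-map $V(b)\to V(m)^*\otimes V(a)$ tensored with the identity, is correct and is precisely the content of Proposition \ref{prop:Zocalo_t} and of Lemma 3.1/Proposition 3.2 of \cite{CGR}. Since you, like the authors, leave open the general non-proportionality claim outside the listed configurations, your proposal is an accurate account of what is actually provable by this method rather than a complete proof --- but no complete proof of the conjecture exists in the paper either, so there is no gap relative to what the paper achieves.
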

 
In order to prove 
Theorem \ref{thm:main}, we proved in \cite[Theorem 3.3]{CGR} the cases 2.2 and 2.3 (and certain converse statement). 
Now, in this paper, we need to prove
part of case 1 and case 2.1 of the conjecture (together with certain converse statement)
in order to prove 
Theorems \ref{thm.fielvsnofiel} and \ref{thm.fielvsfiel}.
This is established in Theorem \ref{prop.conjecture} below.
We point out that this theorem leaves out the 
uniserial $\big(\sl(2)\ltimes\mathfrak{a}(3)\big)$-modules $E(3,4)$ and $E(4,3)$, and a consequence of this is that 
Theorems \ref{thm.fielvsnofiel} and \ref{thm.fielvsfiel} are restricted to 
the standard faithful modules, leaving the exceptional faithful 
uniserial 
$\big(\sl(2)\ltimes \mathfrak{h}_2\big)$-module $FU_{(4,3,4)}$ out of our results.

\begin{theorem}\label{prop.conjecture}
Let $V_1=E(a,b)$ with $a+b=m$ and $a,b\neq 0$.
Let $V_2=V(c)\oplus V(d)$ be the socle decomposition of a 
uniserial $\big(\sl(2)\ltimes V(m)\big)$-module. 
Then

\emph{(i)} If $V_2\simeq E(c,d)$ with $c+d=m$ and $0<a\leq c < m$, then, 
as $\sl(2)$-modules,
\[S_1(V_1,V_2)\simeq \begin{cases}
V(d-a), & \text{if $d-a=b-c\geq 0$}\\
0, & \text{otherwise}.
\end{cases}\]

\

\emph{(ii)} If $V_2\simeq Z(c,1)\simeq E(c,c+m)$, then, as $\sl(2)$-modules,
\[S_1(V_1,V_2)\simeq \begin{cases}
V(b), & \text{if $c=0$}\\
0, & \text{if $c\neq 0$}
\end{cases}.\]

\

\emph{(iii)} If $V_2\simeq Z(d,1)^*\simeq E(d+m,d)$, then
$S_1(V_1,V_2)=0$.

\end{theorem}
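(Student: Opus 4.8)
The plan is to realise $S_1$ as the kernel of the radical action on the ``level-one'' part of $V_1\otimes V_2$ and then to reduce the whole computation to an intersection of two images. Writing the socle decompositions $V_1=V(a)\oplus V(b)$ (socle $V(a)$) and $V_2=V(c)\oplus V(d)$ (socle $V(c)$), the level-one subspace is $V(a)\otimes V(d)\oplus V(b)\otimes V(c)$. All modules here are non-faithful, so $z$ acts by $0$, and by \eqref{eq.soc=invariants} together with Lemma \ref{lemma:soc} an element of level one is $\mathfrak{h}_n$-invariant exactly when it is annihilated by every $e_s$. Since $e_s$ kills the socle of each factor and sends the top to the socle, it maps level one into level zero $=V(a)\otimes V(c)$; hence I would package the action into a single $\sl(2)$-equivariant map $\Psi\colon V(a)\otimes V(d)\oplus V(b)\otimes V(c)\to \text{Hom}(V(m),V(a)\otimes V(c))$, given on the basis $\{e_s\}$ of $V(m)=\mathfrak{h}_n(m)$ by $\Psi(\xi)(e_s)=e_s\xi$, so that $S_1=\ker\Psi$.

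The key reduction is that $\Psi$ restricts to two injective $\sl(2)$-maps $\alpha\colon V(a)\otimes V(d)\hookrightarrow \text{Hom}(V(m),V(a)\otimes V(c))$ and $\beta\colon V(b)\otimes V(c)\hookrightarrow \text{Hom}(V(m),V(a)\otimes V(c))$, built respectively from the structure maps $\tilde\rho_2\colon V(d)\to\text{Hom}(V(m),V(c))$ and $\tilde\rho_1\colon V(b)\to\text{Hom}(V(m),V(a))$ coming from \eqref{eq.actionV(m)}. These are injective because they are nonzero $\sl(2)$-maps out of the \emph{irreducible} tops $V(d)$ and $V(b)$ (this is exactly the length-two uniseriality of the modules $E(\cdot,\cdot)$). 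Consequently $S_1\simeq \text{im}\,\alpha\cap\text{im}\,\beta$ inside $T:=\text{Hom}(V(m),V(a)\otimes V(c))$.

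Because tensor products of two irreducible $\sl(2)$-modules are multiplicity free, both $\text{im}\,\alpha\simeq V(a)\otimes V(d)$ and $\text{im}\,\beta\simeq V(b)\otimes V(c)$ contain each isotypic type at most once, so by Schur's lemma $S_1$ is a multiplicity-one sum of the common constituents $V(p)$ of $V(a)\otimes V(d)$ and $V(b)\otimes V(c)$, read off from \eqref{eq.tensor}. For such a $V(p)$, letting $u_p,w_p\in T$ denote the highest weight vectors of its copies in $\text{im}\,\alpha$ and $\text{im}\,\beta$, one has $V(p)\subseteq S_1$ iff $u_p$ and $w_p$ are linearly dependent; in particular, if the multiplicity $\mu_T(p)$ of $V(p)$ in $T=V(m)\otimes V(a)\otimes V(c)$ equals $1$, then $u_p$ and $w_p$ span the same line and $V(p)\subseteq S_1$ automatically. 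A short count with \eqref{eq.tensor} then pins down the claimed answers as the ``$\mu_T=1$'' part: in (i) the only common constituent with $\mu_T(p)=1$ is $V(m-a-c)=V(d-a)$, which occurs precisely when $a+c\le m$ (equivalently $d-a\ge0$); when $a+c>m$, as well as in (ii) with $c\neq0$ and in (iii), every common constituent has $\mu_T(p)\ge2$; and in (ii) with $c=0$ the unique common constituent is $V(b)$, with $\mu_T=1$.

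It remains to prove the hard direction: for every common constituent $V(p)$ with $\mu_T(p)\ge2$ the vectors $u_p$ and $w_p$ are linearly \emph{independent}, so $V(p)\not\subseteq S_1$. This is where the real work lies. I would make $u_p$ and $w_p$ explicit by writing the highest weight vectors of $V(p)$ inside $V(a)\otimes V(d)$ and $V(b)\otimes V(c)$ through the Clebsch--Gordan expansion \eqref{eq.Vc_en_tensor}, applying the radical action \eqref{eq.actionV(m)}, and expanding the outputs in a fixed basis of the $V(p)$-isotypic component of $T$; the extremal coefficient formulas \eqref{eq.extremoCG0}--\eqref{eq.extremoCG} evaluate all the entries. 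The claimed value of $S_1$ then becomes the assertion that a linear system whose coefficients are Clebsch--Gordan coefficients has the expected (full) rank, equivalently that the relevant $6j$-symbols do not vanish. Establishing this nonvanishing uniformly in $a,c$ and across the three cases---and verifying that it holds precisely for the standard configurations, rather than for the exceptional $E(3,4)$, $E(4,3)$ where an accidental $6j$-zero intervenes---is the main obstacle, and it is what I would carry out in the dedicated computational section.
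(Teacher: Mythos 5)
Your reduction of $S_1$ to $\operatorname{im}\alpha\cap\operatorname{im}\beta$ inside $T=\text{Hom}(V(m),V(a)\otimes V(c))$ is sound, and the multiplicity-one observation gives a genuinely different, and cleaner, route to the \emph{containment} $S_1\supseteq(\text{claimed answer})$ than the paper's: instead of exhibiting the invariant vector by checking that $e_sv_0^{a,d,\mu}$ and $e_sv_0^{b,c,\mu}$ are proportional for every $s$ (with an explicit ratio of square roots of factorials, as in \S\ref{sec.proof_conj}), you get $V(p)\subseteq S_1$ for free from Schur's lemma whenever the $V(p)$-isotypic component of $T$ is irreducible. I checked your multiplicity counts and they are correct: in (i) the common constituents are $V(a+d-2i)$ with multiplicity $a-i+1$ in $T$, so only $i=a$ (which requires $d\ge a$) gives multiplicity one; in (ii) the unique common constituent $V(c+m-a)$ has multiplicity $\min\{a,c\}+1$; in (iii) the unique common constituent $V(a+d)$ has multiplicity $a+1\ge 2$.

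The gap is that the theorem's main content is the \emph{vanishing} of $S_1$ outside the listed cases, i.e.\ that for every common constituent with $\mu_T(p)\ge 2$ the vectors $u_p$ and $w_p$ are linearly independent, and this you do not prove: your last paragraph only sets up the linear system and asserts that it ``has the expected full rank.'' That assertion is precisely what must be established, and it is where essentially all of \S\ref{sec.proof_conj} is spent. Recasting it as nonvanishing of $6j$-symbols does not make it available for free --- there is no uniform nonvanishing theorem to quote, and the zeros of $6j$-symbols at special parameter values are exactly the delicate phenomenon governing this whole subject (they are what produce the exceptional uniserials in the first place). Note also that the paper's execution of this step avoids $6j$-symbols altogether: for each case it fixes one well-chosen $s$ (e.g.\ $s=m$ in case (ii) with $c\ne0$, $s=b$ in case (iii)) and compares the expansions \eqref{eq:esvad} and \eqref{eq:esvbc} of $e_sv_0^{a,d,\mu}$ and $e_sv_0^{b,c,\mu}$ in the basis $v_i^a\otimes v_j^c$, showing that the two sums have different supports with manifestly nonzero extremal Clebsch--Gordan coefficients, whence linear independence is immediate. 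Your argument needs this computation, or an equivalent one, to be a proof. A minor aside: under the stated hypotheses ($a+b=m$ and the three admissible shapes of $V_2$) the modules $E(3,4)$ and $E(4,3)$ simply never occur, so they are excluded at the level of the statement rather than by any verification inside the proof.
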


\

The proof of this result is very technical and it will be given 
in \S\ref{sec.proof_conj}.

\subsection{The faithful case}
We will now focus on the tensor product of two uniserial $\big(\sl(2)\ltimes \mathfrak{h}_n\big)$-modules where one of the factors is a standard faithful uniserial. 

Let $V=V(a_0)\oplus V(a_1)\oplus V(a_2)$ be the socle decomposition of a faithful uniserial $\big(\sl(2)\ltimes \mathfrak{h}_n\big)$-module and let $W=V(b_0)\oplus \hdots \oplus V(b_{\ell})$, with $\ell\geq 1$, be the socle decomposition of a (not necessarily faithful)
uniserial $\big(\sl(2)\ltimes \mathfrak{h}_n\big)$-module. 
By Theorems \ref{thm.CS_Classification} and \ref{thm.CS2_Classification} we have that
\begin{align*}
\mathfrak{h}_n(m)\cdot V(a_i)\subset V(a_{i-1}) &\;\;\text{and}\;\; \mathfrak{h}_n(m)\cdot V(b_j)\subset V(b_{j-1})\oplus V(b_{j-2}) \\
z\cdot V(a_i) \subset V(a_{i-2}) &\;\;\text{and}\;\; z\cdot V(b_j)\subset V(b_{j-2})
\end{align*} 
for all $i=0,1,2$ and $0\leq j\leq \ell$ (for convenience we assume $V(a_i)=V(b_j)=0$ if $i,j<0$). 

Given $v\in V(a_i)\otimes V(b_j)$, let
\begin{equation}\label{action.general}
e_sv=(e_sv)_1+(e_sv)_2+(e_sv)_3 \;\; \text{and}\;\; zv=(zv)_1+(zv)_2
\end{equation}
where
\begin{align*}
(e_sv)_1&\in V(a_{i-1})\otimes V(b_j),& (zv)_1&\in V(a_{i-2})\otimes V(b_j),\\
(e_sv)_2&\in V(a_i)\otimes V(b_{j-1}),& (zv)_2&\in V(a_i)\otimes V(b_{j-2}),\\
(e_sv)_3&\in V(a_i)\otimes V(b_{j-2}).&&
\end{align*}
Note that $(e_sv)_3= 0$ if $W$ is not isomorphic to $E_4$ and that $(zv)_2=0$ if $W$ is not a faithful uniserial module.

\begin{lemma}\label{lemma:action neq}
Let $V_1=V(a_0)\oplus V(a_1)\oplus V(a_2)$ and $V_2=V(b_0)\oplus \hdots \oplus V(b_{\ell})$, with $\ell\geq 1$, be the socle decomposition of two uniserial $\big(\sl(2)\ltimes \mathfrak{h}_n\big)$-modules, where $V_1$ is faithful (not necessarily standard). 
If $v_0\in V(a_{i_0})\otimes V(b_{j_0})$ is a highest weight vector, then:
\begin{enumerate}
\item[(i)] $(e_sv_0)_1=0$ for all $s=0,\hdots, m$ if and only if $i_0=0$.
\item[(ii)] $(e_sv_0)_2=0$ for all $s=0,\hdots, m$ if and only if $j_0=0$.
\item[(iii)] $(zv_0)_1=0$ if and only if $i_0\neq 2$.
\item[(iv)] If $V_2$ is faithful, then $(zv_0)_2=0$ if and only if $j_0\neq 2$.
\end{enumerate}
\end{lemma}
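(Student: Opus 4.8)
The plan is to reduce every assertion to a single linear-algebra principle, namely that \emph{tensoring an injective map with the identity is injective}, together with the observation that each of the five components in \eqref{action.general} is produced by applying a fixed $\sl(2)$-equivariant operator to one tensor factor of $v_0$ while leaving the other untouched. Writing the action on $V_1\otimes V_2$ through the Leibniz rule, one has $(e_sv_0)_1=(\rho^a_s\otimes\mathrm{id})(v_0)$, $(e_sv_0)_2=(\mathrm{id}\otimes\rho^b_s)(v_0)$, $(zv_0)_1=(z\otimes\mathrm{id})(v_0)$ and $(zv_0)_2=(\mathrm{id}\otimes z)(v_0)$, where $\rho^a_s\colon V(a_{i_0})\to V(a_{i_0-1})$ and $\rho^b_s\colon V(b_{j_0})\to V(b_{j_0-1})$ denote the operators by which $e_s\in\mathfrak{h}_n(m)$ acts on the respective consecutive socle layers via \eqref{eq.actionV(m)}. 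Since $v_0$ is a highest weight vector it is in particular nonzero, and this is the only feature of it I shall use.

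The ``if'' directions are immediate from the layer bounds recorded before \eqref{action.general}: if $i_0=0$ the target layer $V(a_{i_0-1})$ is zero, so $(e_sv_0)_1=0$ for all $s$; likewise $j_0=0$ forces $(e_sv_0)_2=0$; $i_0\neq 2$ forces $V(a_{i_0-2})=0$, hence $(zv_0)_1=0$; and, when $V_2$ is faithful (so of length $3$), $j_0\neq 2$ forces $V(b_{j_0-2})=0$, hence $(zv_0)_2=0$.

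For the ``only if'' directions I would argue by contraposition, proving injectivity of the operator that assembles the relevant components. For (i) with $i_0\in\{1,2\}$, consider the $\sl(2)$-map
\[
\mu'\colon V(a_{i_0})\longrightarrow \mathrm{Hom}\bigl(V(m),V(a_{i_0-1})\bigr),\qquad w\mapsto\bigl(e_s\mapsto e_s\cdot w\bigr).
\]
Because $V_1$ is a faithful uniserial module, the consecutive layer action $V(m)\otimes V(a_{i_0})\to V(a_{i_0-1})$ defining it is nonzero; hence $\mu'\neq 0$, and as its kernel is an $\sl(2)$-submodule of the irreducible $V(a_{i_0})$ it must vanish, so $\mu'$ is injective. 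Then $\mu'\otimes\mathrm{id}$ is injective on $V(a_{i_0})\otimes V(b_{j_0})$, and since $(\mu'\otimes\mathrm{id})(v_0)=\sum_s e_s^*\otimes(e_sv_0)_1$, the vanishing of all $(e_sv_0)_1$ would force $v_0=0$, a contradiction. The identical argument with the analogous $\nu'\colon V(b_{j_0})\to\mathrm{Hom}(V(m),V(b_{j_0-1}))$ handles (ii) for $j_0\geq 1$, the nonvanishing of the consecutive action $V(m)\otimes V(b_{j_0})\to V(b_{j_0-1})$ being read off from the explicit description of each uniserial $V_2$ in Theorems \ref{thm.CS_Classification} and \ref{thm.CS2_Classification}. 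For (iii) with $i_0=2$ and (iv) with $j_0=2$ the operator is even simpler: $z$ acts as an $\sl(2)$-\emph{isomorphism} on the top layer of a faithful module, so $z\otimes\mathrm{id}$ (resp.\ $\mathrm{id}\otimes z$) is injective on $V(a_2)\otimes V(b_{j_0})$ (resp.\ $V(a_{i_0})\otimes V(b_2)$), whence $(zv_0)_1\neq 0$ (resp.\ $(zv_0)_2\neq 0$).

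The point requiring the most care is the nonvanishing of the various consecutive-layer maps, together with the bookkeeping that isolates the correct component: in the exceptional module $E_4$ part of the radical action skips a socle level and contributes to $(e_sv_0)_3$ rather than $(e_sv_0)_2$, so one must confirm that the \emph{consecutive} map $V(m)\otimes V(b_{j_0})\to V(b_{j_0-1})$ is itself nonzero for every $1\le j_0\le\ell$. This is exactly what the explicit construction via \eqref{eq.actionV(m)} guarantees, since each consecutive arrow is a nonzero multiple of the essentially unique $\sl(2)$-embedding permitted by the triangle condition. Once this is in hand, the Schur-type injectivity combined with the tensor-with-identity step settles every case uniformly.
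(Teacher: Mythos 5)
Your argument is correct, and it takes a genuinely different route from the paper's. For items (i) and (ii) the paper simply quotes Lemma 3.1 of \cite{CGR}, and for item (iii) it normalizes $v_0=v_0^{a_{i_0},b_{j_0},c}$, expands $(zv_0)_1$ in the basis $v_i^{a_0}\otimes v_j^{b_{j_0}}$ using the Clebsch--Gordan coefficients, and exhibits one visibly nonzero coefficient (the term with $i=0$). You replace all of this with a single structural principle: each component in \eqref{action.general} is obtained by applying a fixed $\sl(2)$-equivariant operator to one tensor factor of $v_0$, that operator is injective (by Schur's lemma for the consecutive-layer maps $V(a_{i_0})\to\mathrm{Hom}(V(m),V(a_{i_0-1}))$, and because $z$ acts as an $\sl(2)$-isomorphism $V(a_2)\to V(a_0)$ for the $z$-components), and an injective map tensored with the identity remains injective. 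This buys uniformity across all four items, avoids any Clebsch--Gordan computation, and makes visible that the highest-weight hypothesis on $v_0$ is superfluous --- only $v_0\neq 0$ is used --- whereas the paper's computation for (iii) uses it to normalize $v_0$. What the paper's explicit computation buys in exchange is the closed formula for $(zv_0)_1$, which is not needed elsewhere. You also correctly isolate the two points where the abstract argument could fail: the equivariance and nonvanishing of the \emph{consecutive} arrow $V(m)\otimes V(b_{j_0})\to V(b_{j_0-1})$ (relevant for $E_4$, where part of the radical action skips a level and lands in the $(e_sv_0)_3$ component), both of which are guaranteed by the explicit construction via \eqref{eq.actionV(m)} in the classification theorems.
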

\begin{proof} 
Since the action of $\mathfrak{h}_n(m)$ on any uniserial $\big(\sl(2)\ltimes \mathfrak{h}_n\big)$-module  is the same as the action of $\mathfrak{a}_{m}$ in the corresponding $\big(\sl(2)\ltimes \mathfrak{a}_{m}\big)$-module, 
 cases (i) and (ii) are immediate consequences of \cite[Lemma 3.1]{CGR}.

By symmetry, it is sufficient to prove (iii) to obtain (iv), so let us prove (iii). 
If $c$ is the weight of $v_0$, 
we can assume that $v_0=v_0^{a_{i_0},b_{j_0},c}$.
It follows from 
\eqref{eq.Vc_en_tensor} and \eqref{action.general} that
\begin{align}
(zv_0)_1
& =(zv_0^{a_{i_0},b_{j_0},c})_1 \notag \\ \label{eq.lem1}
& =\sum_{i+j=\tfrac{a_{i_0}+b_{j_0}-c}{2}}CG(\tfrac{a_{i_0}}{2},\,\tfrac{a_{i_0}}{2}-i;\,\tfrac{b_{j_0}}{2},\;\tfrac{b_{j_0}}{2}-j\,|\,\tfrac{c}{2},\tfrac{c}{2})\,zv_i^{a_{i_0}}\otimes v_j^{b_{j_0}}.
\end{align}
From the definition of the modules $FU^{\pm}_a$ for $n=1$; $FU_{(0,3,0)}$,$FU_{(1,4,1)}$, $FU_{(1,2,1)}$ and $FU_{(4,3,4)}$ for $n=2$; and the modules $FU_{(0,m,0)}$, $FU_{(1,m+1,1)}$ and $FU_{1,m-1,1}$ for $n\ge 3$ (here $m=2n-1$), we know that $zv_i^{a_{i_0}}=0$ if $i_0\neq 2$.
 Therefore, if $i_0\neq 2$ then 
\[
(zv_0)_1=0.
\]
On the other hand, if $i_0=2$, we know from \eqref{eq.action.V(0)} that $zv_i^{a_2}=\lambda v_i^{a_0}$, where $\lambda$ is a non-zero scalar independent of $i$, $0\leq i\leq a_2$. 
Thus, the equation \eqref{eq.lem1} becomes 
\begin{align*}
(zv_0)_1=\lambda\,\sum_{i+j=\tfrac{a_{2}+b_{j_0}-c}{2}}CG(\tfrac{a_{2}}{2},\,\tfrac{a_{2}}{2}-i;\,\tfrac{b_{j_0}}{2},\;\tfrac{b_{j_0}}{2}-j\,|\,\tfrac{c}{2},\tfrac{c}{2})\,v_i^{a_{0}}\otimes v_j^{b_{j_0}}.
\end{align*}
In this sum, the term corresponding to $i=0$, has a non-zero Clebsch-Gordan coefficient, indeed 
\[
CG(\tfrac{a_{2}}{2},\,\tfrac{a_{2}}{2};\,\tfrac{b_{j_0}}{2},\;\tfrac{c-a_2}{2}\,|\,\tfrac{c}{2},\tfrac{c}{2})=
\sqrt{\tfrac{(c+1)!\,a_{2}!}{\left(\tfrac{a_{2}+b_{j_0}+c}{2}+1 \right)!\,\left(\tfrac{a_{2}+c-b_{j_0}}{2} \right)!}}\neq 0.
\]
Since all terms are linearly independent, we obtain $(zv_0)_1\neq 0$.  
\end{proof}

\begin{proposition}\label{prop:Zocalo_t}
Let $V_1=V(a_0)\oplus V(a_1)\oplus V(a_2)$ and $V_2=V(b_0)\oplus \hdots \oplus V(b_{\ell})$, with $\ell\geq 1$, 
be the socle decomposition of two uniserial $\big(\sl(2)\ltimes \mathfrak{h}_n\big)$-modules, where $V_1$ is standard faithful. 
Then, $S_0=V(a_0)\otimes V(b_0)$ and 
\begin{enumerate}
\item[(i)] 
$S_t=0$ for all $t> \min\{2,\ell\}$.
If $S_t\ne 0$, $t=1,2$, then it is irreducible as 
$\sl(2)$-module
and if $v$ is a non-zero highest weight vector in $S_t$ 
of weight $\mu$, then $v=\sum_{i=0}^tv_i$ with 
$v_i$ a non-zero highest weight vector in 
$V(a_i)\otimes V(b_{t-i})$, of weight $\mu$, for all $i=0,\dots,t$.
\item[(ii)] $S_2=0$ if $V_2$ is non-faithful.
\item[(iii)] $S_1(V_1,V_2)\simeq 
S_1\big(E(a_0,a_1), E(b_0,b_1)\big)$.
\item[(iv)] If $V_2$ is also standard faithful, then 
$S_2\ne 0$ if and only if $V_1\simeq V_2$ (that is 
$a_i=b_i$, $i=0,1,2$) and in this case 
$S_2\simeq V(0)$.
\end{enumerate}
\end{proposition}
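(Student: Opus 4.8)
The plan is to compute each $S_t=\big(\bigoplus_{i+j=t}V(a_i)\otimes V(b_j)\big)^{\mathfrak{h}_n}$ by working with highest weight vectors. Since $V(\mu)$ occurs with multiplicity at most one in every $V(a_i)\otimes V(b_j)$, a highest weight vector of weight $\mu$ in $S_t$ can be written uniquely as $w=w_0+w_1+w_2$, where $w_i$ is a (possibly zero) highest weight vector of weight $\mu$ in $V(a_i)\otimes V(b_{t-i})$ (here $i\le 2$ since $V_1$ has length $3$). Expanding $e_s w$ and $z w$ by the Leibniz rule and using \eqref{action.general} together with the socle--lowering pattern recorded before Lemma~\ref{lemma:action neq}, the invariance conditions $e_s w=0$ ($0\le s\le m$) and $z w=0$ translate into the component equations $(e_s w_{i+1})_1+(e_s w_i)_2=0$ and $(z w_{i+2})_1+(z w_i)_2=0$ (terms with indices out of range being zero, and the terms $(e_s\,\cdot)_3$ vanishing unless $V_2$ is of type $E_4$). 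Solving these coupled linear systems is the whole task.

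\textbf{Vanishing (parts (i), top, and (ii)).} I would first clear the vanishing statements. For $t\ge 3$ the topmost $e_s$-equation reads $(e_s w_2)_2=0$, and Lemma~\ref{lemma:action neq}(ii) forces $w_2=0$; the remaining $e_s$-equations then cascade downward to give $w_1=0$ and $w_0=0$, so $S_t=0$. The case $\ell=1,\ t=2$ is covered by part (ii), since a length-$2$ module $V_2$ is non-faithful. For part (ii), if $V_2$ is non-faithful then the centre acts trivially on it, so the only surviving $z$-equation at level $0$ is $(z w_2)_1=0$; as $w_2\in V(a_2)\otimes V(b_0)$ has socle index $i_0=2$, Lemma~\ref{lemma:action neq}(iii) gives $w_2=0$, and the $e_s$-equations again force $w_1=w_0=0$. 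Hence $S_2=0$.

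\textbf{The cases $t=1,2$ (parts (i), (iii), and set-up for (iv)).} For $t=1$ the only condition is $(e_s w_1)_1+(e_s w_0)_2=0$, which is precisely the equation defining $S_1\big(E(a_0,a_1),E(b_0,b_1)\big)$; this proves (iii), Lemma~\ref{lemma:action neq}(i)--(ii) shows both components are nonzero, and Theorem~\ref{prop.conjecture} yields irreducibility. For $t=2$ (so necessarily $V_2$ faithful, $\ell=2$, $a_2=a_0$, $b_2=b_0$) the same lemma shows that a nonzero $w$ has $w_0,w_1,w_2$ all nonzero. Here the two $e_s$-families are exactly the defining equations of $S_1\big(E(a_0,a_1),E(b_1,b_0)\big)$ and of $S_1\big(E(a_1,a_0),E(b_0,b_1)\big)$, coupled through the shared vector $w_1$, while the single $z$-equation at level $0$ closes the system; since each $w_i$ is pinned down up to a scalar by $\mu$, irreducibility of $S_2$ follows once only one weight $\mu$ is shown to occur.

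\textbf{The core and the main obstacle (part (iv)).} For the direction $S_2\ne0\Rightarrow V_1\simeq V_2$, I would feed the two $S_1$-problems above into Theorems~\ref{thm:main} and \ref{prop.conjecture}, running over the finitely many standard faithful \emph{flavors} ($a_1=a_0+m$, $a_1=a_0-m$, or $a_0+a_1=m$). The decisive mechanisms are that a ``decreasing $\otimes$ decreasing'' pairing vanishes (Theorem~\ref{thm:main}(iii)) and that the surviving ``$a+b=m$'' pairings carry the sign constraint $d-a\ge0$ of Theorem~\ref{prop.conjecture}(i); together these force that whenever $V_1\not\simeq V_2$ at least one of the two $S_1$-problems is zero, so no common nonzero $w_1$ can exist, whereas in the diagonal case both reduce to the value $V(0)$, giving $\mu=0$ and with it the uniqueness of the weight. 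For the converse, when $V_1\simeq V_2=V$ the self-duality of $V$ (Theorem~\ref{thm.CS2_Classification}) together with \eqref{eq.soc=invariants} gives $\mathrm{Hom}_{\sl(2)\ltimes\mathfrak{h}_n}(V,V)\cong\bigoplus_t S_t^{\sl(2)}$; this space contains the identity and the central operator $z$, and $z$ is not a scalar because it lowers the socle length by two, so the space is at least $2$-dimensional. Since $\dim S_0^{\sl(2)}=1$, $S_1^{\sl(2)}=0$ (the diagonal value $S_1(E(a_0,a_1),E(a_0,a_1))$ has positive highest weight when nonzero, because $a_0\ne a_1$), and $S_t=0$ for $t\ge3$, we conclude $S_2^{\sl(2)}\ne0$, whence $S_2\simeq V(0)$. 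The main obstacle is the flavor-by-flavor bookkeeping in the forward direction: tracking the Clebsch--Gordan data through the two coupled $S_1$-systems and verifying that the triangle and sign constraints really leave the diagonal case as the unique survivor, including the small-$n$ and boundary modules.
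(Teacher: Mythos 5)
Your proposal is correct. For parts (i)--(iii) and the ``only if'' half of (iv) it follows essentially the paper's route: decompose a highest weight vector of $S_t$ into components in the $V(a_i)\otimes V(b_{t-i})$, use Lemma~\ref{lemma:action neq} to force every component to be nonzero (whence the vanishing for $t>\min\{2,\ell\}$ and, via the $z$-equation, for non-faithful $V_2$ at $t=2$), and for $t=2$ extract the two coupled subsystems $S_1\big(E(a_0,a_1),E(b_1,b_2)\big)$ and $S_1\big(E(a_1,a_2),E(b_0,b_1)\big)$, whose simultaneous non-vanishing is ruled out off the diagonal by the case-check against Theorems~\ref{thm:main} and~\ref{prop.conjecture} (a check that you, like the paper, leave as bookkeeping). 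The genuine divergence is in the ``if'' half of (iv). The paper argues directly: on the diagonal both subsystems are $\simeq V(0)$, so there is a unique $u$ up to scalar with $\mathfrak{h}_n(m)u=0$, and since $z$ lies in $[\mathfrak{h}_n(m),\mathfrak{h}_n(m)]$ this $u$ automatically satisfies $zu=0$ and hence lies in $S_2$. You instead count intertwiners: $\mathrm{id}$ and the central operator $z$ are linearly independent in $\text{Hom}_{\sl(2)\ltimes\mathfrak{h}_n}(V,V)\cong\bigoplus_t S_t^{\sl(2)}$, and since $\dim S_0^{\sl(2)}=1$, $S_1^{\sl(2)}=0$ and $S_t=0$ for $t\ge3$, the second invariant must sit in $S_2$. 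Both are valid; the paper's version is more self-contained (it needs neither the identification $\text{Hom}(V,V)\cong\text{soc}(V^*\otimes V)^{\sl(2)}$ nor the self-duality of $V$), while yours explains conceptually why $S_2\ne0$ happens exactly on the diagonal (it is the shadow of the non-scalar central endomorphism $z$) and avoids checking the $z$-equation on the constructed vector. One minor imprecision: your reason that the diagonal $S_1(E(a_0,a_1),E(a_0,a_1))$ has positive highest weight ``because $a_0\ne a_1$'' really only fits the $E(1,m-1)$ flavor; for the increasing flavor the weight is $2a_0+m>0$ for a different reason, and for the decreasing flavor the space is already zero --- the conclusion $S_1^{\sl(2)}=0$ is nonetheless correct in every case.
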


\begin{proof}
The proof of this proposition is very similar to that of 
Proposition 3.2 in \cite{CGR}.
We fix $t> 0$ and we assume that there is a non-zero
highest weight vector $u$ of weight $\mu$, 
\[
u=\sum_{i+j=t} u_{i,j}
\in \Big(\bigoplus_{i+j=t} V(a_{i})\otimes V(b_j)\Big)^{\mathfrak{h}_n}\ne 0, \qquad u_{i,j} \in V(a_{i})\otimes V(b_j).
\]
Since $V(a_{i})\otimes V(b_j)$ is an $\mathfrak{sl}(2)$-submodule, it follows that $u_{i,j}$ is either zero or a highest weight vector of weight $\mu$. 
Let
\[
 I_t^\mu=\{(i,j): 0\le i\le 2,\; 0\le j \le\ell,\;i+j=t \text{ and } u_{i,j}\ne 0\}.
\]
Since $u\ne0$, it follows that 
$I_t^\mu\ne\emptyset$ and 
\[
 u=\sum_{(i,j)\in I_t^\mu}q_{i,j}\,v_0^{a_i,b_j,\mu}
\]
for certain non-zero scalars $0\ne q_{i,j}\in\F$. 
Now, it follows from items (i) and (ii) in Lemma \ref{lemma:action neq} (see the details in 
 \cite{CGR}[Proposition 3.2]) 
that 
\begin{equation}\label{eq:It}
I_t^\mu=\{(0,t),(1,t-1),\dots,(t,0)\}.
\end{equation}
Now, again, items (i) and (ii) in Lemma \ref{lemma:action neq} imply 
that such a non-zero $u$ cannot exist if $t> \min\{2,\ell\}$ and thus $S_t=0$. This proves (i).
Furthermore, (ii) follows similarly by applying
item (iii) in Lemma \ref{lemma:action neq}. 

(iii) is clear from the definition of $S_1$.

Let us prove (iv).
Assume that $V_2=V(b_0)\oplus V(b_1)\oplus V(b_2)$ is standard faithful, 
and suppose that 
\begin{equation}\label{eq.u}
 u=q_{0,2}\,v_0^{a_0,b_2,\mu}
 +q_{1,1}\,v_0^{a_1,b_1,\mu}
 +q_{2,0}\,v_0^{a_2,b_0,\mu}\ne 0
\end{equation}
is a highest weight vector, of weight $\mu$, in $S_2$. 
We already know that $q_{0,2}, q_{1,1},q_{2,0}\ne0$ and, moreover,
we must have
\[
q_{0,2}\,v_0^{a_0,b_2,\mu}
 +q_{1,1}\,v_0^{a_1,b_1,\mu}\in 
S_1\big(E(a_0,a_1),E(b_1,b_2))\ne0
\]
and 
\[
 q_{1,1}\,v_0^{a_1,b_1,\mu}
 +q_{2,0}\,v_0^{a_2,b_0,\mu} \in 
S_1\big(E(a_1,a_2), E(b_0,b_1))\ne0.
\]

Theorems \ref{thm:main} and
\ref{prop.conjecture} imply that 
it is impossible to have 
\[
S_1\big(E(a_0,a_1), E(b_1,b_2)\big)\ne0\quad\text{ and } \quad 
S_1\big(E(a_1,a_2), E(b_0,b_1)\big)\ne0
\]
unless $(a_0,a_1,a_2)=(b_0,b_1,b_2)$.
This follows by considering all the cases with 
$(a_0,a_1,a_2)$ and $(b_0,b_1,b_2)$ running over (see \S\ref{subsec.Faithful modules})
\begin{enumerate}
\item[(i)] if $n=1$ (that is $m=1$)
\[
(k_0,k_0+1,k_0),\quad k_0\ge0;\qquad 
(k_0,k_0-1,k_0),\quad k_0\ge1; 
\]
\item[(ii)] if $n\geq 2$ (that is $m\ge 3$), 
\[
(0,m,0),\;
(1,m+1,1),\;
(1,m-1,1);
\]
\end{enumerate}
(it saves time noticing 
 that 
 $E(a_0,a_1)^*\simeq E(a_1,a_2)$ and $E(b_0,b_1)^*\simeq E(b_1,b_2)$).

Finally, let $(a_0,a_1,a_2)=(b_0,b_1,b_2)$.
We know, from Theorems \ref{thm:main} and
\ref{prop.conjecture}, that 
\[
S_1\big(E(a_0,a_1), E(b_1,b_2)\big)\simeq 
S_1\big(E(a_1,a_2), E(b_0,b_1)\big)
\simeq V(0).
\]
This implies that there is, up to a scalar, a unique element $u$ as in \eqref{eq.u}
such that $\mathfrak{h}_n(m)u=0$. This implies that 
$zu=0$ and hence $u\in S_2$. 
This completes the proof. 
\end{proof}

 We are now in a position to prove the main theorems of the paper.

\begin{theorem}\label{thm.fielvsnofiel}
Let $V_1=V(a_0)\oplus V(a_1)\oplus V(a_2)$ and $V_2=V(b_0)\oplus \hdots \oplus V(b_\ell)$ be the socle decomposition of two uniserial $\big(\sl(2)\ltimes \mathfrak{h}_n\big)$-modules, where $V_1$ is standard faithful and $V_2$ is of type $Z$. Then  
\[\text{soc}(V_1\otimes V_2)= S_0\oplus S_1\]
where, as $\sl(2)$-modules,
\[S_0=\text{soc}(V_1)\otimes \text{soc}(V_2)\simeq \bigoplus_{k=0}^{\min\{a_0,b_0\}}V(a_0+b_0-2k)\]
and the following tables describe $S_1$ as $\sl(2)$-modules (recall that $m=2n-1$):

\medskip

\noindent
Case $n=1$ ($m=1$). 

\noindent
\begin{tabular}{|c|c|c|}
\hline
${}_{\displaystyle V_1}\backslash 
{\displaystyle V_2}$ & 
$\begin{array}{l}
Z(b_0,\ell) 
\end{array}$  & 
$\begin{array}{l}
Z(b_\ell,\ell)^* 
\end{array}$
\rule[-3mm]{0mm}{8mm}\\
\hline
$\begin{array}{l}
FU_{a_0}^+
\end{array}$ & 
$V(a_0+b_0+m).$ & 
$\begin{array}{rl}
V(b_1-a_0), & \text{if $b_1\ge a_0$, $\ell \ge 1$;}\\[2mm]
0, & \text{otherwise.}
\end{array}$
\rule[-6mm]{0mm}{14mm}\\
\hline
$\begin{array}{l}
FU_{a_0}^- \\[1mm]
\text{\small $a_0\ge 1$}
\end{array}$ & 
$\begin{array}{rl}
V(a_0-b_1), & \text{if $a_0\ge b_1$, $\ell \ge 1$;}\\[2mm]
0,& \text{otherwise.}
 \end{array}$ & 
$0.$
\rule[-6mm]{0mm}{14mm}\\
\hline
\end{tabular}

\bigskip

\noindent
Case $n>1$ ($m\ge3$). 

\noindent
\begin{tabular}{|c|c|c|}
\hline
${}_{\displaystyle V_1}\backslash 
{\displaystyle V_2}$ & 
$\begin{array}{l}
Z(b_0,\ell) 
\end{array}$  & 
$\begin{array}{l}
Z(b_\ell,\ell)^* 
\end{array}$
\rule[-3mm]{0mm}{8mm}\\
\hline
$\begin{array}{l}
FU_{(a_0,a_0+m,a_0)} \\[1mm]
\text{\small $a_0=0,1$}
\end{array}$ & 
$V(a_0+b_0+m).$ & 
$\begin{array}{rl}
V(b_1-a_0), & \text{ if $a_0\leq b_1$;}\\[2mm]
0, & \text{ if $a_0>b_1$.}
\end{array}$
\rule[-6mm]{0mm}{14mm}\\
\hline
$\begin{array}{l}
FU_{(1,m-1,1)}
\end{array}$ & 
$\begin{array}{rl}
V(m-1), & \text{if $b_0=0$;}\\[2mm]
0,& \text{otherwise.}
 \end{array}$ & 
$0.$
\rule[-6mm]{0mm}{14mm}\\
\hline
\end{tabular}
\end{theorem}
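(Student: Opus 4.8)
The plan is to reduce the whole statement to the length-two computations already in hand, namely Theorem~\ref{thm:main} and Theorem~\ref{prop.conjecture}. First I would invoke Proposition~\ref{prop:Zocalo_t}: since $V_2$ is of type $Z$ it is non-faithful, so part~(ii) gives $S_2=0$, and part~(i) gives $S_t=0$ for all $t>\min\{2,\ell\}$; hence $\text{soc}(V_1\otimes V_2)=S_0\oplus S_1$. The summand $S_0=\text{soc}(V_1)\otimes\text{soc}(V_2)=V(a_0)\otimes V(b_0)$ is then decomposed by the Clebsch--Gordan formula \eqref{eq.tensor}, yielding the stated $\bigoplus_{k=0}^{\min\{a_0,b_0\}}V(a_0+b_0-2k)$. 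For the remaining summand, Proposition~\ref{prop:Zocalo_t}(iii) reduces the problem to the single isomorphism $S_1(V_1,V_2)\simeq S_1\big(E(a_0,a_1),E(b_0,b_1)\big)$, so everything comes down to identifying the two length-two modules sitting on the bottom pairs of socle factors of $V_1$ and $V_2$, and then reading off the answer from the length-two results.

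The key observation that organizes the case analysis is that most of these bottom pairs are themselves of type $Z$. For $V_2$ we always have $E(b_0,b_1)=Z(b_0,1)$ when $V_2=Z(b_0,\ell)$ (so $b_1=b_0+m$) and $E(b_0,b_1)=Z(b_1,1)^*$ when $V_2=Z(b_\ell,\ell)^*$ (so $b_1=b_0-m$). For $V_1$, the bottom pair is $E(a_0,a_0+m)=Z(a_0,1)$ in the cases $FU_{a_0}^+$ ($n=1$) and $FU_{(0,m,0)},FU_{(1,m+1,1)}$ ($n>1$), and it is $E(a_0,a_0-m)=Z(a_1,1)^*$ in the case $FU_{a_0}^-$ ($n=1$). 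In all of these the quantity $S_1\big(E(a_0,a_1),E(b_0,b_1)\big)$ is the invariant of a tensor product of two length-two type~$Z$ modules, so it is governed directly by Theorem~\ref{thm:main}(i)--(iii) according to whether each factor is increasing or a dual; substituting $b_1=b_0\pm m$ and $a_1=a_0\pm m$ reproduces each corresponding cell of the two tables, including the inequality that controls when the answer vanishes.

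The only bottom pair that is genuinely not of type $Z$ is $E(1,m-1)$, arising from $V_1=FU_{(1,m-1,1)}$ for $n>1$, and this is exactly the input for which Theorem~\ref{prop.conjecture} was designed. Writing the bottom pair of $V_1$ as $E(a,b)=E(1,m-1)$ with $a+b=m$ and $a,b\neq0$, part~(ii) of that theorem applied to $E(b_0,b_0+m)=Z(b_0,1)$ gives $S_1\simeq V(m-1)$ when $b_0=0$ and $S_1=0$ otherwise, while part~(iii) applied to $E(b_0,b_0-m)=Z(b_1,1)^*$ gives $S_1=0$; these are precisely the two cells in the $FU_{(1,m-1,1)}$ row. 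Throughout, I would also record the short verification that the vectors producing $S_1$ are killed by $z$ as well as by $\mathfrak{h}_n(m)$: the center acts by zero on the two lowest socle factors of $V_1$ (it raises socle degree by two), and $z$ acts by zero on all of $V_2$, so the $\mathfrak{h}_n$-invariants coincide with the $\mathfrak{r}$-invariants computed above, which is what legitimizes the reduction to the non-faithful length-two setting.

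The genuinely hard analytic content is not in this theorem but in Theorem~\ref{prop.conjecture}, whose proof (a linear-system argument with Clebsch--Gordan entries) is deferred to \S\ref{sec.proof_conj}. Granting that together with Theorem~\ref{thm:main}, the main obstacle here is purely organizational: correctly matching the two parametrizations $b_1=b_0+m$ and $b_1=b_0-m$ against the table conditions, and checking that the existence constraints on a type~$Z$ factor (all highest weights non-negative, $\ell\ge1$) line up with the stated inequalities such as $b_1\ge a_0$ and with the vanishing alternatives.
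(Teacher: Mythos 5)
Your proposal is correct and follows essentially the same route as the paper: reduce to $S_0\oplus S_1$ via Proposition~\ref{prop:Zocalo_t}, decompose $S_0$ by the Clebsch--Gordan formula, use item~(iii) of that proposition to pass to $S_1\big(E(a_0,a_1),E(b_0,b_1)\big)$, and then read off the answer from Theorem~\ref{thm:main} in the type-$Z$ cases and from Theorem~\ref{prop.conjecture} when the bottom pair of $V_1$ is $E(1,m-1)$. Your added remark that the relevant vectors are also annihilated by $z$ is already subsumed in the paper's use of Proposition~\ref{prop:Zocalo_t}, but it does no harm.
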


\begin{proof}
We know from Proposition \ref{prop:Zocalo_t} that 
\[\text{soc}(V_1\otimes V_2)\simeq \text{soc}(V_1)\otimes \text{soc}(V_2)\oplus S_1\]
(in particular we know that $S_t(V_1,V_2)=0$ for all $t\ge2$).
The decomposition of $\text{soc}(V_1)\otimes \text{soc}(V_2)$
follows from the Clebsch-Gordan formula. 
We now describe $S_1$ in each case. 

Let us consider the submodules 
\begin{align*}
U_1& =V(a_0)\oplus V(a_1)\subset V_1, \text{ and} \\
U_2& =V(b_0)\oplus V(b_1)\subset V_2. 
\end{align*}
We know that $U_1$ and $U_2$ are 
non-faithful uniserial $\big(\sl(2)\ltimes \mathfrak{h}_n\big)$-modules. 
From Proposition \ref{prop:Zocalo_t} item (iii) we know that 
\begin{align*}\label{def.U's}
S_1(V_1,V_2)&\simeq S_1(U_1,U_2),
\end{align*}

If $V_1$ is $FU_{a_0}^+$ or $FU_{(0,m,0)}$ or $FU_{(1,1+m,1)}$ then 
$U_1\simeq Z(a_0,1)$ and $S_1$ is obtained from Theorem \ref{thm:main}. 
If $V_1=FU_{a_0}^-$, then $U_1=Z(a_0-1,1)^*$ and $S_1$ is also obtained from Theorem \ref{thm:main}. 

Finally, if $V_1=FU_{(1,m-1,1)}$, then $U_1=E(1,m-1)$ and Theorem \ref{prop.conjecture} implies the remaining cases. 
\end{proof}

\begin{theorem}\label{thm.fielvsfiel}
Let $V_1=V(a_0)\oplus V(a_1)\oplus V(a_2)$ and $V_2=V(b_0)\oplus V(b_1) \oplus V(b_2)$ be the socle decomposition of two standard faithful uniserial $\big(\sl(2)\ltimes \mathfrak{h}_n\big)$-modules. Then
\[\text{soc}(V_1\otimes V_2)= S_0\oplus S_1\oplus S_2\]
where 
\[S_0=\text{soc}(V_1)\otimes \text{soc}(V_2)\simeq \bigoplus_{k=0}^{\min\{a_0,b_0\}}V(a_0+b_0-2k)\] 
and the following tables describe $S_1$ and $S_2$  as $\sl(2)$-modules 
($m=2n-1$):

\bigskip

\noindent Case $n=1$ ($m=1$), structure of $S_1$.

\noindent
\begin{tabular}{|c|c|c|}
\hline
${}_{\displaystyle V_1}\backslash 
{\displaystyle V_2}$ & 
$\begin{array}{l}
{ FU^+_{b_0}}
\end{array}$  & 
$\begin{array}{l}
{ FU^-_{b_0}}\\
\text{\small $b_0\ge 1$}
\end{array}$ 
\rule[-3mm]{0mm}{10mm}\\
\hline
$\begin{array}{l}
{FU^+_{a_0}}
\end{array}$ & 
$V(a_0+b_0+1). $ & 
$\begin{array}{rl}
V(b_0-a_1),& \text{if $a_1\le b_0$;} \\[1mm]
0,& \text{otherwise.} 
\end{array}$ 
\rule[-6mm]{0mm}{16mm}\\
\hline
$\begin{array}{l}
{FU^-_{a_0}}\\
\text{\small $a_0\ge 1$}
\end{array}$ & 
$\begin{array}{rl}
V(a_0-b_1),& \text{if $b_1\le a_0$;} \\[1mm]
0,& \text{otherwise.} 
\end{array}$ & 
$0.$
\rule[-6mm]{0mm}{16mm}\\
\hline
\end{tabular}

\bigskip

\noindent Case $n=1$ ($m=1$), structure of $S_2$.

\noindent
\begin{tabular}{|c|c|c|}
\hline
${}_{\displaystyle V_1}\backslash 
{\displaystyle V_2}$ & 
$\begin{array}{l}
{FU^+_{b_0}} 
\end{array}$  & 
$\begin{array}{l}
{FU^-_{b_0}} \\
\text{\small $b_0\ge 1$}
\end{array}$ 
\rule[-3mm]{0mm}{10mm}\\
\hline
$\begin{array}{l}
{FU^+_{a_0}}
\end{array}$ & 
$\begin{array}{rl}
V(0),& \text{if $a_0= b_0$;}\\[1mm]
0, & \text{if $a_0\neq b_0$.} 
\end{array}$& 
$0.$ 
 \rule[-6mm]{0mm}{14mm}\\
\hline
$\begin{array}{l}
{FU^-_{a_0}}\\
\text{\small $a_0\ge 1$}
\end{array}$ & 
$0.$& 
$\begin{array}{rl}
V(0),& \text{if $a_0=b_0$;} \\[1mm] 
0,& \text{if $a_0\neq b_0$.} 
\end{array}$ 
\rule[-6mm]{0mm}{14mm}\\
\hline
\end{tabular}

\bigskip

\noindent Case $n>1$ ($m\ge 3$), structure of $S_1$.

\noindent
\begin{tabular}{|c|c|c|}
\hline
${}_{\displaystyle V_1}\backslash 
{\displaystyle V_2}$ & 
$\begin{array}{l}
FU_{(b_0,b_0+m,b_0)} \\
\text{\small $b_0=0,1$}
\end{array}$  & 
$\begin{array}{l}
FU_{(1,m-1,1)}
\end{array}$ 
\rule[-3mm]{0mm}{9mm}\\
\hline
$\begin{array}{l}
FU_{(a_0,a_0+m,a_0)}\\
\text{\small $a_0=0,1$}
\end{array}$ & 
$V(a_0+b_0+m). $ & 
$\begin{array}{rl}
V(m-1), & \text{if $a_0=0$;}\\[1mm]
0,& \text{otherwise.} 
\end{array}$ 
\rule[-6mm]{0mm}{16mm}\\
\hline
$\begin{array}{l}
FU_{(1,m-1,1)}
\end{array}$ & 
$\begin{array}{rl}
V(m-1), & \text{if $b_0=0$;}\\[1mm]
0,& \text{otherwise.} 
\end{array}$ & 
$V(m-2).$
\rule[-6mm]{0mm}{16mm}\\
\hline
\end{tabular}

\bigskip

\noindent Case $n>1$ ($m\ge 3$), structure of $S_2$.

\noindent
\begin{tabular}{|c|c|c|}
\hline
${}_{\displaystyle V_1}\backslash 
{\displaystyle V_2}$ & 
$\begin{array}{l}
FU_{(b_0,b_0+m,b_0)}\\
\text{\small $b_0=0,1$} 
\end{array}$  & 
$\begin{array}{l}
FU_{(1,m-1,1)} 
\end{array}$ 
\rule[-3mm]{0mm}{9mm}\\
\hline
$\begin{array}{l}
FU_{(a_0,a_0+m,a_0)}\\
\text{\small $a_0=0,1$}
\end{array}$ & 
$\begin{array}{rl}
V(0),& \text{if $a_0= b_0$;}\\[1mm]
0, & \text{if $a_0\neq b_0$.} 
\end{array}$ & 
$0.$ 
 \rule[-6mm]{0mm}{14mm}\\
\hline
$\begin{array}{l}
FU_{(1,m-1,1)}
\end{array}$ & 
$0.$& 
$V(0).$ 
\rule[-6mm]{0mm}{14mm}\\
\hline
\end{tabular}
\end{theorem}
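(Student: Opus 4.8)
The plan is to reduce Theorem \ref{thm.fielvsfiel} to the two-step results already established, exactly mirroring the strategy used for Theorem \ref{thm.fielvsnofiel}. By Proposition \ref{prop:Zocalo_t}(i)--(ii) we already know that $\text{soc}(V_1\otimes V_2)=S_0\oplus S_1\oplus S_2$ with $S_t=0$ for $t\ge 3$, and that each nonzero $S_t$ ($t=1,2$) is irreducible as an $\sl(2)$-module. The decomposition of $S_0=\text{soc}(V_1)\otimes\text{soc}(V_2)\simeq V(a_0)\otimes V(b_0)$ is immediate from the Clebsch-Gordan formula \eqref{eq.tensor}. Thus the entire content of the theorem lies in filling the two tables, i.e. computing $S_1$ and $S_2$ in each of the finitely many combinations of standard faithful factors.

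For $S_1$, I would invoke Proposition \ref{prop:Zocalo_t}(iii), which gives the reduction
\[
S_1(V_1,V_2)\simeq S_1\big(E(a_0,a_1),E(b_0,b_1)\big).
\]
Here $E(a_0,a_1)$ and $E(b_0,b_1)$ are the length-$2$ non-faithful modules sitting at the bottom of each faithful tower, and for the standard faithful modules these are precisely the pairs $[a_0,a_1]\in\{[a_0,a_0+m],[1,m-1]\}$ (with the $n=1$ analogues $[a_0,a_0\pm1]$). Each resulting value of $S_1$ is then read off from Theorem \ref{thm:main} when $[a_0,a_1]=[a_0,a_0+m]$ is of type $Z$, and from Theorem \ref{prop.conjecture}(i) when one of the factors is $E(1,m-1)$. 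Concretely, the case $FU_{(1,m-1,1)}\otimes FU_{(1,m-1,1)}$ corresponds to $S_1\big(E(1,m-1),E(1,m-1)\big)$, which is the situation $a+b=c+d=m$ with $a=c=1$, so Theorem \ref{prop.conjecture}(i) gives $V(d-a)=V(m-2)$; the mixed cases $FU_{(a_0,a_0+m,a_0)}\otimes FU_{(1,m-1,1)}$ reduce to $S_1\big(E(a_0,a_0+m),E(1,m-1)\big)$, which is nonzero (equal to $V(m-1)$) exactly when $a_0=0$, matching Theorem \ref{prop.conjecture}(i)/(ii). I would organize this as a short case analysis over the finite lists in Proposition \ref{prop:Zocalo_t}, noting the duality shortcut $E(a_0,a_1)^*\simeq E(a_1,a_2)$.

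For $S_2$, the key is Proposition \ref{prop:Zocalo_t}(iv): a nonzero highest weight vector in $S_2$ exists if and only if $V_1\simeq V_2$, and in that case $S_2\simeq V(0)$. This single statement fills every entry of both $S_2$ tables at once: the diagonal entries with $(a_0,a_1,a_2)=(b_0,b_1,b_2)$ give $V(0)$, and all off-diagonal entries give $0$. I would simply verify that the isomorphism condition $V_1\simeq V_2$ translates, under the parametrization of standard faithful modules, into the stated numerical conditions ($a_0=b_0$ together with matching sign of the middle weight for $n=1$, and the explicit coincidence of triples for $n>1$).

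The main obstacle is not the bookkeeping but ensuring that the reductions in Proposition \ref{prop:Zocalo_t}(iii)--(iv) genuinely cover every standard faithful pair without appealing to the excluded module $FU_{(4,3,4)}$ or to the still-open cases of Conjecture \ref{conj:length2}. I would therefore check carefully that each length-$2$ socle layer $E(a_0,a_1)$ arising from a standard faithful tower falls under a case already proved in Theorem \ref{thm:main} or Theorem \ref{prop.conjecture} — in particular that the problematic pairs $E(3,4)$ and $E(4,3)$ (which force $n=2$ and are tied to $FU_{(4,3,4)}$) never appear. Once this coverage is confirmed, the theorem follows by assembling the per-case values into the four tables, and no further computation with Clebsch-Gordan coefficients is needed beyond what Theorems \ref{thm:main} and \ref{prop.conjecture} already supply.
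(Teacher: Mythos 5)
Your proposal is correct and follows essentially the same route as the paper's proof: reduce $S_1$ to $S_1\big(E(a_0,a_1),E(b_0,b_1)\big)$ via Proposition \ref{prop:Zocalo_t}(iii) and read off the values from Theorems \ref{thm:main} and \ref{prop.conjecture}, and obtain $S_2$ directly from Proposition \ref{prop:Zocalo_t}(iv). Your explicit check that the excluded pairs $E(3,4)$, $E(4,3)$ never arise from standard faithful towers is a sensible precaution that the paper handles implicitly by restricting to standard faithful modules.
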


\

\begin{proof}
As in the previous theorem, we know from Proposition \ref{prop:Zocalo_t} that 
\[\text{soc}(V_1\otimes V_2)\simeq \text{soc}(V_1)\otimes \text{soc}(V_2)\oplus S_1\oplus S_2\]
and the decomposition of $\text{soc}(V_1)\otimes \text{soc}(V_2)$
follows from the Clebsch-Gordan formula. 
We now describe $S_1$ and $S_2$ in each case. 

First, we consider $S_1$. 
We know from Proposition \ref{prop:Zocalo_t} 
that 
\[
S_1(V_1,V_2)\simeq 
S_1\big(E(a_0,a_1), E(b_0,b_1)\big).
\]
If $V_1=FU_{a_0}^+$ and $V_2=FU_{b_0}^+$, we know from Theorem \ref{thm:main} that
\[
S_1(V_1,V_2)=S_1(Z(a_0,1),Z(b_0,1))=V(a_0+b_0+m).
\]
If $V_1=FU_{a_0}^+$ and $V_2=FU_{b_0}^-$, we have
\[
S_1(V_1,V_2)=\begin{cases}
S_1(Z(a_0,1),Z(b_1,1)^*),& \text{if $m=1$;}\\[1mm]
S_1(Z(a_0,1),E(1,1-m)), & \text{if $m>1$ and $b_0=1$;}
\end{cases}
\]
and it follows from Theorems \ref{thm:main} and \ref{prop.conjecture} that
\[
S_1(V_1,V_2)=\begin{cases}
V(b_1-a_0) & \text{if $m=1$;}\\
V(m-1), & \text{if $m>1$, $a_0=0$ and $b_0=1$;}\\
0,& \text{otherwise.}
\end{cases}
\]
This completes the description of $S_1$.

The result for $S_2$ follows from item (iv) in Proposition \ref{prop:Zocalo_t}.
\end{proof}

\section{Intertwining operators}
\label{sec.5}

In this section we obtain, from 
Theorems \ref{thm.fielvsnofiel} and \ref{thm.fielvsfiel},
the space of intertwining operators 
between the uniserial representations of 
$\g=\sl(2)\ltimes \mathfrak{h}_n$ 
considered in the previous section.

Recall that, for any pairs of $\g$-modules
$U_1$ and $U_2$, we know that 
\[
\text{Hom}_\g(U_1,U_2)\simeq (U_1^*\otimes U_2)^\g= \big((U_1^*\otimes U_2)^{\mathfrak{h}_n}\big)^{\sl(2)}.
\]
It follows that $\text{Hom}_{\g}(U_1,U_2)$ is isomorphic to 
$\text{soc}(U_1^*\otimes U_2)^{\sl(2)}$ (see \eqref{eq.soc=invariants}).
Thus, we must identify the cases in which $S_t(U_1^*,U_2)^{\sl(2)}\ne0$ for $t=0,1,2$.

So, for the rest of the section, let us assume that 
\begin{align}\label{eq.VV}
V& = V(a_0)\oplus V(a_1)\oplus V(a_2), \quad\text{($a_0=a_2$)} \\ \label{eq.WW}
W&=V(b_0)\oplus \hdots \oplus V(b_\ell)
\end{align} 
is the socle decomposition of two uniserial $\big(\sl(2)\ltimes \mathfrak{h}_n\big)$-modules, where $V$ is standard faithful and $W$ is either of type $Z$ or standard faithful. 
Recall that $V^*\simeq V$ and that the socle decomposition of $W^*$ is $V(b_\ell)\oplus \hdots \oplus V(b_0)$. 

\begin{theorem}
Let $V$ and $W$ be as in \eqref{eq.VV} and \eqref{eq.WW} with 
$V$ standard faithful and $W$ either of type $Z$ or standard faithful. Then $\text{Hom}_{\g}(V,W)$ and $\text{Hom}_{\g}(W,V)$ are zero except in the following cases.
\begin{enumerate}[(1)]
\item $W$ is of type $Z$ and $a_0=b_0$. 
Here $\text{Hom}_{\g}(V,W)$ is $1$-dimensional and it is described by the following arrow
\begin{align*}
V= V(a_0) \oplus V(a_1)\, \oplus\, & V(a_0) \\
 & \quad\downarrow \\
 W=\; & V(b_0) \oplus \cdots \oplus V(b_\ell). 
\end{align*}

\item  $W$ is of type $Z$ and $a_0=b_{\ell}$. 
Here $\text{Hom}_{\g}(W,V)$ is $1$-dimensional
and it is described by the following arrow
\begin{align*}
W= V(b_0) \oplus \cdots \oplus & V(b_\ell) \\
 & \quad\downarrow \\
V=\; & V(a_0) \oplus V(a_1)\, \oplus\, V(a_0).
\end{align*}

\item  $W$ is standard faithful and $a_0=b_0$.
Here $\text{Hom}_{\g}(V,W)$ is $1$-dimensional if 
$V\not\simeq W$ and it is $2$-dimensional if 
$V\simeq W$.
In both cases, a $\g$-module morphism that is not an isomorphism is described by the following arrow
\begin{align*}
V= V(a_0) \oplus V(a_1)\, \oplus\, & V(a_0) \\
 & \quad\downarrow \\
 W=\; & V(b_0) \oplus V(b_1) \oplus V(b_0). 
\end{align*}

\item $n=1$, $a_0=b_1$, $V=FU_{a_0}^-$, $W=Z(b_0,\ell)$, $\ell\ge1$.
Here $\text{Hom}_{\g}(V,W)$ is $1$-dimensional it is described by the following arrow
\begin{align*}
V= V(a_0) \oplus\, V(&a_0-1)\,\oplus V(a_0) \\
 & \quad\downarrow \hspace{1.2cm}\downarrow \\
 W=\; & V(b_0)\;\; \oplus\; V(b_1) \oplus \cdots \oplus V(b_\ell).
\end{align*}

\item $n=1$, $a_0=b_{\ell-1}$, $V=FU_{a_0}^-$, $W=Z(b_\ell,\ell)^*$, $\ell\ge1$.
Here $\text{Hom}_{\g}(W,V)$ is $1$-dimensional it is described by the following arrow
\begin{align*}
 W=\; V(b_0)\oplus \cdots \oplus\; & V(b_{\ell-1}) \oplus V(b_\ell) \\
 & \quad\downarrow \hspace{1.2cm}\downarrow \\
V= &V(a_0) \oplus\, V(a_0-1)\,\oplus V(a_0).
\end{align*}
 \item $n=1$, $a_0=b_1$, $V=FU_{a_0}^+$, $W=Z(b_\ell,\ell)^*$, $\ell\ge1$.
Here $\text{Hom}_{\g}(V,W)$ is $1$-dimensional it is described by the following arrow
\begin{align*}
V= V(a_0) \oplus\, V(&a_0+1)\,\oplus V(a_0) \\
 & \quad\downarrow \hspace{1.2cm}\downarrow \\
 W=\; & V(b_0)\;\; \oplus\; V(b_1) \oplus \cdots \oplus V(b_\ell). 
\end{align*}

\item $n=1$, $a_0=b_{\ell-1}$, $V=FU_{a_0}^+$, $W=Z(b_0,\ell)$, $\ell\ge1$.
Here $\text{Hom}_{\g}(W,V)$ is $1$-dimensional it is described by the following arrow
\begin{align*}
 W=\; V(b_0)\oplus \cdots \oplus\; & V(b_{\ell-1}) \oplus V(b_\ell) \\
 & \quad\downarrow \hspace{1.2cm}\downarrow \\
V= &V(a_0) \oplus\, V(a_0+1)\,\oplus V(a_0).
\end{align*} 

 \item $n>1$, $a_0=0,1$, $V=FU_{(a_0,a_0+m,a_0)}$, $ W=Z(a_0,1)^*$.
Here $\text{Hom}_{\g}(V,W)$ is $1$-dimensional it is described by the following arrow
\begin{align*}
V= V(a_0) \oplus\, V(&a_0+m)\,\oplus V(a_0) \\
 & \quad\downarrow \hspace{1.7cm}\downarrow \\
 W=\; & V(a_0+m) \oplus V(a_0). 
\end{align*}

\item $n>1$, $a_0=0,1$, $V=FU_{(a_0,a_0+m,a_0)}$, $W=Z(a_0,1)$.
Here $\text{Hom}_{\g}(W,V)$ is $1$-dimensional it is described by the following arrow
\begin{align*}
 W=\;  & V(a_0) \oplus V(a_0+m) \\
 & \quad\downarrow \hspace{1.5cm}\downarrow \\
V= &V(a_0) \oplus\, V(a_0+m)\,\oplus V(a_0).
\end{align*} 

\item $n=1$, $a_0=b_1$, $V=FU_{a_0}^-$, $W=FU_{b_0}^+$.
Here $\text{Hom}_{\g}(V,W)$ is $1$-dimensional it is described by the following arrow
\begin{align*}
V= V(a_0) \oplus\, V(&a_0-1)\oplus V(a_0) \\
 & \quad\downarrow \hspace{1.3cm}\downarrow \\
 W=\; & V(b_0) \oplus V(b_0+1)\oplus V(b_0) .
\end{align*}

\item $n=1$, $a_0=b_1$, $V=FU_{a_0}^+$, $W=FU_{b_0}^-$.
\begin{align*}
V= V(a_0) \oplus\, V(&a_0+1)\oplus V(a_0) \\
 & \quad\downarrow \hspace{1.3cm}\downarrow \\
 W=\; & V(b_0) \oplus V(b_0-1)\oplus V(b_0) .
\end{align*}

\end{enumerate}
\end{theorem}

\begin{proof} We will obtain items (1)-(11) by considering
all the cases in which either 
 $S_t(V^*,W)^{\sl(2)}\ne0$ or 
  $S_t(W^*,V)^{\sl(2)}\ne0$, 
 for $t=0,1,2$.

\begin{enumerate}[$\bullet$]
\item Cases where $S_0(V^*,W)^{\sl(2)}\ne0$ or 
  $S_0(W^*,V)^{\sl(2)}\ne0$ and $W$ of type $Z$.

It follows from Theorem \ref{thm.fielvsnofiel} that 
$S_0(V^*,W)^{\sl(2)}\ne0$
 if and only if 
 $a_0=b_0$ (and equal to $a_2$). 
Also, Theorem \ref{thm.fielvsnofiel} implies that, in these cases, 
we have $\dim S_0(V^*,W)^{\sl(2)}=1$ and $S_t(V^*,W)^{\sl(2)}=0$, $t=1,2$.
Hence, $\text{Hom}_{\g}(V,W)$ is $1$-dimensional and it is described by the following arrow
\begin{align*}
V= V(a_0) \oplus V(a_1)\, \oplus\, & V(a_0) \\
 & \quad\downarrow \\
 W=\; & V(b_0) \oplus \cdots \oplus V(b_\ell). 
\end{align*}

Similarly, from Theorem \ref{thm.fielvsnofiel} we know that 
$S_0(W^*,V)^{\sl(2)}\ne0$
 if and only if $a_0=b_\ell$.
 In these cases $S_t(W^*,V)^{\sl(2)}=0$ for $t=1,2$, 
$\text{Hom}_{\g}(W,V)$ is $1$-dimensional and 
it is described by the following arrow
\begin{align*}
W= V(b_0) \oplus \cdots \oplus & V(b_\ell) \\
 & \quad\downarrow \\
V=\; & V(a_0) \oplus V(a_1)\, \oplus\, V(a_0).
\end{align*}

Items (1) and (2) are consequence of this case.

\medskip

\item  Cases where $S_0(V^*,W)^{\sl(2)}\ne0$ or 
  $S_2(V^*,W)^{\sl(2)}\ne0$ and $W$ standard faithful.

It follows from Theorem \ref{thm.fielvsfiel} that 
$S_0(V,W)^{\sl(2)}\ne0$
 if and only if 
 $a_0=b_0$.
 Hence, in what follows, $a_0=a_2=b_0=b_2$.
 In this case, 
 $S_1(V,W)^{\sl(2)}=0$ and 
 $\dim S_0(V,W)^{\sl(2)}=1$.
 Note that if $V\simeq W$, that is 
 $a_1=b_1$, then Theorem \ref{thm.fielvsfiel} says that, in addition, we have
 $\dim S_2(V,W)^{\sl(2)}=1$. 
The non-zero $\g$-morphism corresponding to the 1-dimensional space 
$S_0(V,W)^{\sl(2)}$
is described by the following arrow
\begin{align*}
V= V(a_0) \oplus V(a_1)\, \oplus\, & V(a_0) \\
 & \quad\downarrow \\
 W=\; & V(b_0) \oplus V(b_1) \oplus V(b_0). 
\end{align*}
This is clearly not an isomorphism.
In the particular case when $V\simeq W$, the isomorphism
is the $\g$-morphism corresponding to the 1-dimensional space 
$S_2(V,W)^{\sl(2)}$ (see also Proposition \ref{prop:Zocalo_t}). 
Thus
\[
\dim\text{Hom}_{\g}(V,W)=\begin{cases}
1, & \text{if $V\not\simeq W$ (that is $a_1\ne b_1$);} \\
2, & \text{if $V\simeq W$ (that is $a_1= b_1$)}.
\end{cases}
\] 
Since here $V$ and $W$ are of the same type, we do not need to consider 
$\text{Hom}_{\g}(W,V)$.
This case yields item (3).  

\medskip

\item Cases  where $S_1(V^*,W)^{\sl(2)}\ne0$ or 
  $S_1(W^*,V)^{\sl(2)}\ne0$. 

It follows from Theorems \ref{thm.fielvsnofiel} and \ref{thm.fielvsfiel} that, in all these cases, these spaces are 1-dimensional and 
$S_t(V^*,W)^{\sl(2)}=0$ and $S_t(W^*,V)^{\sl(2)}=0$
for $t=0,2$.
Hence, $\text{Hom}_{\g}(V,W)$ (or $\text{Hom}_{\g}(W,V)$) is $1$-dimensional. 
We describe all these cases below:

\medskip

\begin{enumerate}[$\circ$]
\item Cases with $W$ of type $Z$ and $\text{Hom}_{\g}(V,W)\ne0$.

\begin{enumerate}
\item $n=1$, $a_0=b_1$, $V=FU_{a_0}^-$, $W=Z(b_0,\ell)$, $\ell\ge1$.
\begin{align*}
V= V(a_0) \oplus\, V(&a_0-1)\,\oplus V(a_0) \\
 & \quad\downarrow \hspace{1.2cm}\downarrow \\
 W=\; & V(b_0)\;\; \oplus\; V(b_1) \oplus \cdots \oplus V(b_\ell).
\end{align*}

 \item $n=1$, $a_0=b_1$, $V=FU_{a_0}^+$, $W=Z(b_\ell,\ell)^*$, $\ell\ge1$.
\begin{align*}
V= V(a_0) \oplus\, V(&a_0+1)\,\oplus V(a_0) \\
 & \quad\downarrow \hspace{1.2cm}\downarrow \\
 W=\; & V(b_0)\;\; \oplus\; V(b_1) \oplus \cdots \oplus V(b_\ell). 
\end{align*}

 \item $n>1$, $a_0=0,1$, $ V=FU_{(a_0,a_0+m,a_0)}$, $ W=Z(a_0,1)^*$.
\begin{align*}
V= V(a_0) \oplus\, V(&a_0+m)\,\oplus V(a_0) \\
 & \quad\downarrow \hspace{1.2cm}\downarrow \\
 W=\; & V(a_0+m)\;\; \oplus\; V(a_0). 
\end{align*}
\end{enumerate}
These cases correspond to items (4), (6) and (8).

\medskip

\item Cases with $W$ of type $Z$ and $\text{Hom}_{\g}(W,V)\ne0$.

\begin{enumerate}
\item $n=1$, $a_0=b_{\ell-1}$, $V=FU_{a_0}^+$, $W=Z(b_0,\ell)$, $\ell\ge1$.
\begin{align*}
 W=\; V(b_0)\oplus \cdots \oplus\; & V(b_{\ell-1}) \oplus V(b_\ell) \\
 & \quad\downarrow \hspace{1.2cm}\downarrow \\
V= &V(a_0) \oplus\, V(a_0+1)\,\oplus V(a_0).
\end{align*} 
\item $n=1$, $a_0=b_{\ell-1}$, $V=FU_{a_0}^-$, $W=Z(b_\ell,\ell)^*$, $\ell\ge1$.
\begin{align*}
 W=\; V(b_0)\oplus \cdots \oplus\; & V(b_{\ell-1}) \oplus V(b_\ell) \\
 & \quad\downarrow \hspace{1.2cm}\downarrow \\
V= &V(a_0) \oplus\, V(a_0-1)\,\oplus V(a_0).
\end{align*}
\item $n>1$, $V=FU_{(a_0,a_0+m,a_0)}$, $a_0=0,1$, $W=Z(a_0,1)$.
\begin{align*}
 W=\;  & V(a_0) \oplus V(a_0+m) \\
 & \quad\downarrow \hspace{1.5cm}\downarrow \\
V= &V(a_0) \oplus\, V(a_0+m)\,\oplus V(a_0).
\end{align*} 
\end{enumerate}
These cases correspond to items (5), (7) and (9).

\medskip

\item Cases with $W$ standard faithful and $\text{Hom}_{\g}(V,W)\ne0$.

\begin{enumerate}
\item $n=1$, $a_0=b_1$, $V=FU_{a_0}^-$, $W=FU_{b_0}^+$.
\begin{align*}
V= V(a_0) \oplus\, V(&a_0-1)\oplus V(a_0) \\
 & \quad\downarrow \hspace{1.3cm}\downarrow \\
 W=\; & V(b_0) \oplus V(b_0+1)\oplus V(b_0) .
\end{align*}

\item $n=1$, $a_0=b_1$, $V=FU_{a_0}^+$, $W=FU_{b_0}^-$.
\begin{align*}
V= V(a_0) \oplus\, V(&a_0+1)\oplus V(a_0) \\
 & \quad\downarrow \hspace{1.3cm}\downarrow \\
 W=\; & V(b_0) \oplus V(b_0-1)\oplus V(b_0) .
\end{align*}
\end{enumerate}
These cases correspond to items (10) and (11).
\end{enumerate}
\end{enumerate}
This completes the proof.
\end{proof}

\section{Proof of Theorem \ref{prop.conjecture}}\label{sec.proof_conj}
Let $\mu$ be a possible highest weight in $S_1$.
We start with some general considerations and next we will 
work out each case. 

We know, from Proposition \ref{prop:Zocalo_t},
that $\mu$ must be a highest weight in both 
$V(a)\otimes V(d)$ and $V(b)\otimes V(c)$, that is 
\begin{equation}\label{eq:cotas_mu}
|a-d|,|b-c|\le \mu \le a+d,b+c
\end{equation}
and $\mu\equiv a+d\equiv b+c\mod 2$.
We also know that $\mu$ is indeed a
 highest weight in $S_1$ if and only if
 there is a linear combination 
\[
u_0=q_1v_0^{a,d,\mu}+q_2 v_0^{b,c,\mu},
\]
with $q_1,q_2\ne 0$ (see item (i) in Proposition \ref{prop:Zocalo_t}), that is 
annihilated by $e_s$ for all 
$s=0,\dots,m$. Indeed, this implies that $u_0$ is also annihilated by $z$ and
thus in $S_1$ (see \eqref{eq.soc=invariants}).

We now describe 
$e_sv_0^{a,d,\mu}$ and 
$e_sv_0^{b,c,\mu}$.

On the one hand we have (see \eqref{eq.Vc_en_tensor})
\begin{equation}
v_0^{a,d,\mu}
=\sum_{i,j} 
CG(\tfrac{a}{2},\tfrac{a}{2}-i;\,\tfrac{d}{2},\tfrac{d}{2}-j
\,|\,\tfrac{\mu}{2},\tfrac{\mu}{2})\,
 v_i^a\otimes v_j^d 
\end{equation}
and thus (see \eqref{eq.actionV(m)})
\begin{align}
e_sv_0^{a,d,\mu}
& =\sum_{i,j} 
CG(\tfrac{a}{2},\tfrac{a}{2}-i;\,\tfrac{d}{2},\tfrac{d}{2}-j
\,|\,\tfrac{\mu}{2},\tfrac{\mu}{2})\, v_{i}^a\otimes e_sv_{j}^d \notag \\
& =\sum_{i,j,k} (-1)^j
CG(\tfrac{a}{2},\tfrac{a}{2}-i;\,\tfrac{d}{2},\tfrac{d}{2}-j
\,|\,\tfrac{\mu}{2},\tfrac{\mu}{2})\notag \\
&\hspace{4cm}\times 
CG(\tfrac{c}{2},\tfrac{c}{2}-k;\,\tfrac{d}{2},-\tfrac{d}{2}+j
\,|\,\tfrac{m}{2},\tfrac{m}{2}-s)\,
v_{i}^a\otimes v_{k}^{c} \notag \\
& =\sum_{i,j,k} (-1)^k
CG(\tfrac{a}{2},\tfrac{a}{2}-i;\,\tfrac{d}{2},\tfrac{d}{2}-k
\,|\,\tfrac{\mu}{2},\tfrac{\mu}{2})\notag \\\label{eq:esvad}
&\hspace{4cm}\times 
CG(\tfrac{c}{2},\tfrac{c}{2}-j;\,\tfrac{d}{2},-\tfrac{d}{2}+k
\,|\,\tfrac{m}{2},\tfrac{m}{2}-s)\,
v_{i}^a\otimes v_{j}^{c}.
\end{align}
In this sum, if the coefficient of $v_{i}^a\otimes v_{j}^{c}$ is not zero then we must have
\begin{equation}\label{eq.Coef_no_nulo_ad}
\begin{split}
\frac{a}{2}-i+\frac{d}{2}-k & = \frac{\mu}{2}, \\
\frac{c}{2}-j-\frac{d}{2}+k & =\frac{m}{2}-s.
\end{split}
\end{equation}

On the other hand, we have (see \eqref{eq.Vc_en_tensor})
\begin{equation}
v_0^{b,c,\mu}
 =\sum_{i,j} 
CG(\tfrac{b}{2},\tfrac{b}{2}-i;\,\tfrac{c}{2},\tfrac{c}{2}-j
\,|\,\tfrac{\mu}{2},\tfrac{\mu}{2})\,
 v_i^b\otimes v_j^c
\end{equation}
and thus (see \eqref{eq.actionV(m)})
\begin{align}
e_sv_0^{b,c,\mu}
& =\sum_{i,j} 
CG(\tfrac{b}{2},\tfrac{b}{2}-i;\,\tfrac{c}{2},\tfrac{c}{2}-j
\,|\,\tfrac{\mu}{2},\tfrac{\mu}{2})\,
 e_s v_i^b\otimes v_j^c. \notag \\
& =\sum_{i,j,k} (-1)^i
CG(\tfrac{b}{2},\tfrac{b}{2}-i;\,\tfrac{c}{2},\tfrac{c}{2}-j
\,|\,\tfrac{\mu}{2},\tfrac{\mu}{2})\notag \\
&\hspace{4cm}\times 
CG(\tfrac{a}{2},\tfrac{a}{2}-k;\,\tfrac{b}{2},-\tfrac{b}{2}+i
\,|\,\tfrac{m}{2},\tfrac{m}{2}-s)\,
v_k^a\otimes v_j^c \notag \\
& =\sum_{i,j,k} (-1)^k 
CG(\tfrac{b}{2},\tfrac{b}{2}-k;\,\tfrac{c}{2},\tfrac{c}{2}-j
\,|\,\tfrac{\mu}{2},\tfrac{\mu}{2})\notag \\\label{eq:esvbc}
&\hspace{4cm}\times 
CG(\tfrac{a}{2},\tfrac{a}{2}-i;\,\tfrac{b}{2},-\tfrac{b}{2}+k
\,|\,\tfrac{m}{2},\tfrac{m}{2}-s)\,
v_i^a\otimes v_j^c.
\end{align}
In this sum, if the coefficient of $v_{i}^a\otimes v_{j}^{c}$ is not zero then we must have
\begin{equation}\label{eq.Coef_no_nulo_bc}
\begin{split}
\frac{b}{2}-k+\frac{c}{2}-j & = \frac{\mu}{2}, \\
\frac{a}{2}-i-\frac{b}{2}+k & =\frac{m}{2}-s.
\end{split}
\end{equation}
Either \eqref{eq.Coef_no_nulo_ad} or \eqref{eq.Coef_no_nulo_bc} imply
\begin{equation}\label{eq.imasj}
i+j=\frac{a+c-m-\mu}{2}+s,
\end{equation}
(recall that $0\le i\le a$ and $0\le j \le c$).
Now we consider all the cases.

\bigskip

\noindent
\emph{(i) The case $V_2=E(c,d)$ with $c+d=m$ and $0<a\le c$:} Here
\[
\mu=d+a-2p,\;\; 0\le p\le \min\{a,d\}
\]
and it follows from \eqref{eq.imasj}
that
\begin{equation}
0\le i+j=p-d+s.
\end{equation}
The sum \eqref{eq:esvad} is 
\begin{align*}
e_sv_0^{a,d,\mu}
& =\sum_{i,j,k} (-1)^k
CG(\tfrac{a}{2},\tfrac{a}{2}-i;\,
\tfrac{d}{2},\tfrac{d}{2}-k
\,|\,\tfrac{a+d}{2}-p,\tfrac{a+d}{2}-p)\, \\ 
&\hspace{2cm}\times
CG(\tfrac{m-d}{2},\tfrac{m-d}{2}-j;\,
\tfrac{d}{2},-\tfrac{d}{2}+k
\,|\,\tfrac{m}{2},\tfrac{m}{2}-s)\;\;
v_{i}^a\otimes v_{j}^{c}. \\
\end{align*}
In this sum, it follows from \eqref{eq.Coef_no_nulo_ad} that
\begin{align*}
k&=p-i\\
j&=p-d+s-i.
\end{align*}
The conditions $k\ge 0$ and $0\le j\le m-d$ imply $p-m+s\le i\le \min\{p,p-d+s\}$. Therefore, we obtain (see \eqref{eq.extremoCG0} and \eqref{eq.extremoCG})
\begin{align*}
e_sv_0^{a,d,\mu}
& =\sum_{i=\max\{0,p-m+s\}}^{\min\{p,p-d+s\}} (-1)^{p-i}
CG(\tfrac{a}{2},\tfrac{a}{2}-i;\,
\tfrac{d}{2},\tfrac{d}{2}-p+i
\,|\,\tfrac{a+d}{2}-p,\tfrac{a+d}{2}-p)\, \\ 
&\hspace{1cm}\times
CG(\tfrac{m-d}{2},\tfrac{m-d}{2}-p+d-s+i;\,
\tfrac{d}{2},-\tfrac{d}{2}+p-i
\,|\,\tfrac{m}{2},\tfrac{m}{2}-s)\;\;
v_{i}^a\otimes v_{p-d+s-i}^{c} \\[3mm]
& =\sum_{i=\max\{0,p-m+s\}}^{\min\{p,p-d+s\}} (-1)^{p}
\sqrt{
\frac{
(a+d-2p+1)!\;p!\;(a-i)!\,(d-p+i)! 
}{
(a-p)!\; (d-p)!\; (a+d-p+1) !\; i!\; (p-i)!
}
} \\ 
&\hspace{1cm}\times
\sqrt{
\frac{
(m-s)!\;s!\;(m-d)!\;d!\;
}{
m!\;(m-p-s+i)!\;(p-i)!\;(d-p+i)!\;(p-d+s-i)!
}
} \;\;
v_{i}^a\otimes v_{p-d+s-i}^{c}.
\end{align*}
Thus, 
\[
e_sv_0^{a,d,\mu}=(-1)^{p} \;\sqrt{
\frac{
(m-d)!\;(a+d-2p+1)!\;p!\;d!\;(m-s)!\;s!
}{
(a-p)!\; (d-p)!\; (a+d-p+1) !\; m!
}
} \; w_s^{a,d,\mu}
\]
with 
\[
w_s^{a,d,\mu}=
\sum_{i=\max\{0,p-m+s\}}^{\min\{p,p-d+s\}}
\sqrt{
\frac{
(a-i)!
}{
i!\; (p-i)!^2\;(m-p-s+i)!\; \;(p-d+s-i)!
}
} \;\;
v_{i}^a\otimes v_{p-d+s-i}^{c}.
\]

On the other hand, the sum \eqref{eq:esvbc} is
\begin{align*}
e_sv_0^{b,c,\mu}
& =\sum_{i,j,k} (-1)^k 
CG(\tfrac{m-a}{2},\tfrac{m-a}{2}-k;\,
\tfrac{m-d}{2},\tfrac{m-d}{2}-j
\,|\,\tfrac{a+d}{2}-p,\tfrac{a+d}{2}-p)\,  \\ 
&\hspace{2cm}\times
CG(\tfrac{a}{2},\tfrac{a}{2}-i;\,
\tfrac{m-a}{2},-\tfrac{m-a}{2}+k
\,|\,\tfrac{m}{2},\tfrac{m}{2}-s)\;\;
v_i^a\otimes v_j^c.
\end{align*}
In this sum, it follows from \eqref{eq.Coef_no_nulo_bc} that
\begin{align*}
j&=p-d+s-i\\
k&=m-a-s+i
\end{align*}
and the condition $ k\ge 0$ implies $i\ge a-m+s$, and condition $j\geq 0$ implies $p-d+s\ge i$. 
Therefore, we obtain (see \eqref{eq.extremoCG0} and \eqref{eq.extremoCG})
\begin{align*}
e_sv_0^{b,c,\mu}
& =\sum_{i=\max\{0,a-m+s\}}^{\min\{a,p-d+s\}} (-1)^{m-a-s+i}
 \\ 
&\hspace{0.8cm}\times
CG(\tfrac{m-a}{2},-\tfrac{m-a}{2}+s-i;\,
\tfrac{m-d}{2},\tfrac{m-d}{2}-p+d-s+i
\,|\,\tfrac{a+d}{2}-p,\tfrac{a+d}{2}-p)\,  \\ 
&\hspace{0.8cm}\times
CG(\tfrac{a}{2},\tfrac{a}{2}-i;\,
\tfrac{m-a}{2},\tfrac{m-a}{2}-s+i
\,|\,\tfrac{m}{2},\tfrac{m}{2}-s)\;\;
v_i^a\otimes v_{p-d+s-i}^c \\[3mm]
& =\sum_{i=\max\{0,a-m+s\}}^{\min\{a,p-d+s\}}
\sqrt{
\frac{
(a+d-2p+1)!\,(p+m-a-d)!\,(s-i)!\,(m-p-s+i)!
}{
(d-p)!\,(a-p)!\,(m-a-s+i)!\,(p-d+s-i)!\,(m-p+1)!
}
} \\ 
&\hspace{2cm}\times
\sqrt{
\frac{ 
s!\;(m-s)!\,a!\,(m-a)!\,
}{
m!\,(a-i)!\,(m-a-s+i)!\;i!\;(s-i)!
}
} \;\;
v_i^a\otimes v_{a-p+s-i}^c.
\end{align*}
Thus
\[
e_sv_0^{b,c,\mu}=
\sqrt{
\frac{
(m-s)!\;s!\;
(a+d-2p+1)!\; (p+m-a-d)!\; a!\; (m-a)!
}{
(d-p)!\; (a-p)!\;(m-p+1)!\; m!
}
}\; w_s^{b,c,\mu}
\]
where
\[
w_s^{b,c,\mu}=
\sum_{i=\max\{0,a-m+s\}}^{\min\{a,p-d+s\}} 
\sqrt{
\frac{
(m-p-s+i)!
}{
i!\; (m-a-s+i)!^2\;(p-d+s-i)!\; (a-i)!\; 
}
} \;\;
v_{i}^a\otimes v_{p-d+s-i}^{c}.
\]

\

Now, if $a\le d$ and $p=a$, we have, for all $0\le s\le m$,
\begin{align*}
w_s^{a,d,\mu} & =\sum_{i=\max\{0,a-m+s\}}^{\min\{a,a-d+s\}}
\sqrt{
\frac{
1
}{
i!\; (a-i)!\;(m-a-s+i)!\; \;(a-d+s-i)!
}
} \;\;
v_{i}^a\otimes v_{p-d+s-i}^{c}\\
& = w_s^{b,c,\mu}.
\end{align*}
This shows that
\[u_0=(-1)^a\;\sqrt{d+1}\;v_0^{a,d,\mu}-\sqrt{b+1}\;v_0^{b,c,\mu}\]
is, indeed, a highest weight vector, of weight $\mu=d-a=b-c$, in $S_1$.

\

If $p < a$ then, for $s=d$, the sum defining $w^{a,d,\mu}_d$ has the index $i$ running up to $i=a$ while 
the sum defining 
$w_d^{b,c,\mu}$ has the index $i$ running only 
up to $i=p$. In both cases, 
all the coefficients are non-zero, and thus 
$\{w^{a,d,\mu}_1, w^{b,c,\mu}_1\}$ is linearly independent. 
This shows that there is no possible $\mu$ in $S_1$ and thus $S_1=0$.
This completes the proof in this case.

\bigskip

\noindent \emph{(ii) The case $V_2=E(c,d)$ with $d=c+m$:} 
Since $a<m\le d$ and by equation \eqref{eq:cotas_mu} we have 
\begin{align*}
\mu& = b+c-2p, \;\; 0\leq p\leq \min\{c,b\},\\
\mu& = a+d-2p', \;\; 0\leq p'\leq a.
\end{align*}
This implies $p'-p=a$ and hence $p'=a$ and $p=0$. 
This yields 
\[
\mu=b+c=d-a.
\]
First we prove that, if $c=0$, then $S_1(V_1,V_2)\simeq V(b)$.
In this case, \eqref{eq:esvad} becomes 
\begin{align*}
e_sv_0^{a,d,\mu}
& =\sum_{i,k} (-1)^k
CG(\tfrac{m-b}{2},\tfrac{m-b}{2}-i;\,
\tfrac{m}{2},\tfrac{m}{2}-k
\,|\,\tfrac{b}{2},\tfrac{b}{2})\, \\ 
&\hspace{2cm}\times
CG(0,0;\,
\tfrac{m}{2},-\tfrac{m}{2}+k
\,|\,\tfrac{m}{2},\tfrac{m}{2}-s)\;\;
v_{i}^a\otimes v_{0}^{0} 
\end{align*}
(the index $j$ is $0$). 
It follows from \eqref{eq.Coef_no_nulo_ad} that
\begin{align*}
k&=m-s\\
i&=-b+s.
\end{align*}
Therefore, $e_sv_0^{a,d,\mu}=0$ if $s<b$ and, 
for $s\geq b$ we have (see \eqref{eq.extremoCG0} and \eqref{eq.extremoCG11})
\begin{align*}
e_sv_0^{a,d,\mu}
& =(-1)^{m-s}
CG(\tfrac{m-b}{2},-\tfrac{m-b}{2}+m-s;\,
\tfrac{m}{2},-\tfrac{m}{2}+s
\,|\,\tfrac{b}{2},\tfrac{b}{2})\, \\ 
&\hspace{3cm}\times
CG(0,0;\,
\tfrac{m}{2},\tfrac{m}{2}-s
\,|\,\tfrac{m}{2},\tfrac{m}{2}-s)\;\;
v_{s-b}^a\otimes v_{0}^{0}\\[3mm]
&=
\sqrt{\frac{(b+1)\,(m-b)!\,s!}{(m+1)!\,(s-b)!}}\;\;
v_{s-b}^a\otimes v_{0}^{0}.
\end{align*}

On the other hand, since $c=0$ and $\mu=b$, \eqref{eq:esvbc} becomes (see also \eqref{eq.actionV(m)} or \eqref{eq.actionV(m)1})
\begin{align*}
e_sv_0^{b,c,\mu}
& =e_sv_0^b\otimes v_0^0 \\
& =
\begin{cases}
CG(\tfrac{a}{2},\,\tfrac{a}{2}-(s-b);\,
\tfrac{b}{2},-\tfrac{b}{2}
\,|\,\tfrac{m}{2},\tfrac{m}{2}-s)\;\;
v_{s-b}^a\otimes v_0^0, & \text{if $s\ge b$};\\[2mm]
0, & \text{if $s<b$}.
\end{cases} \\
& =
\begin{cases}
\sqrt{\frac{a!\,s!}{(s-b)!\,m!}}\;\;
v_{s-b}^a\otimes v_{0}^0, & \text{if $s\ge b$};\\[2mm]
0, & \text{if $s<b$}.
\end{cases}
\end{align*}
This shows that
 \[
 u_0=\sqrt{b+1}\;v_0^{a,d,\mu}-\sqrt{m+1}\;v_0^{c,d,\mu} 
  \]
 is, indeed, a highest weight vector of weight $\mu=b$, in $S_1$.

\

Now, suppose that $c\neq 0$ and set $s=m$. Recall that 
$\mu=b+c=d-a$. 
When we consider the sum \eqref{eq:esvad}, it follows from \eqref{eq.Coef_no_nulo_ad} that 
\begin{align*}
j=k&=a-i.
\end{align*}
The condition $0\le j\le c$ implies $a-c\le i \le a$ and hence \eqref{eq:esvad} becomes (see \eqref{eq.extremoCG11})
\begin{align*}
e_mv_0^{a,d,\mu}&=\sum_{i=\max\{0,a-c\}}^a(-1)^{a-i}\, CG(\tfrac{a}{2},\tfrac{a}{2}-i;\,\tfrac{d}{2},\tfrac{d}{2}-(a-i)\,|\,\tfrac{d-a}{2},\,\tfrac{d-a}{2})\notag\\
&\hspace{3cm} \times
CG(\tfrac{c}{2},\tfrac{c}{2}-(a-i);\,\tfrac{d}{2}, -\tfrac{d}{2}+a-i\,|\,\tfrac{m}{2},-\tfrac{m}{2})\,v_i^a\otimes v_{a-i}^{c}\\[3mm]
&= \sum_{i=\max\{0,a-c\}}^a(-1)^{i+d-a}\sqrt{\frac{(c+b+1)\;(m-b)!\;(c+b+i)!}{(c+m+1)!\;i!}}\\
&\hspace{4cm}
\times \sqrt{\frac{(m+1)\;c!\;(c+b+i)!}{(c+m+1)!\;(c-m+b+i)!}}\;\;v_i^a\otimes v_{a-i}^{c}. 
\end{align*}

On the other hand, when we consider the sum 
\eqref{eq:esvbc},
 it follows from \eqref{eq.Coef_no_nulo_bc} that
\[
j=a-i=-k
\]
and the condition $k\geq 0$ implies that $k=j=0$ and $i=a=m-b$. 
Thus, \eqref{eq:esvbc} is
\begin{align*}
e_mv_0^{b,c,\mu}&=CG(\tfrac{b}{2},\tfrac{b}{2};\, \tfrac{c}{2},\tfrac{c}{2}\,|\, \tfrac{c+b}{2},\tfrac{c+b}{2}) 
\;CG(\tfrac{m-b}{2}, -\tfrac{m-b}{2};\, \tfrac{b}{2},-\tfrac{b}{2}\,|\, \tfrac{m}{2}, -\tfrac{m}{2})\;\; v_a^a\otimes v_0^{c}\\[3mm]
&=v_a^a\otimes v_0^{c}.
\end{align*}

Since $c\neq 0$, the sum in $e_mv_0^{a,d,\mu}$ has at least two non-zero terms, while the sum in $e_mv_0^{c,d,\mu}$ has a single non-zero term, and thus $\{e_mv_0^{a,d,\mu},e_mv_0^{c,b,\mu}\}$ is linearly independent. This completes the proof in this case.

\bigskip

\noindent \emph{(iii) The case $V_2=E(c,d)$ with $c=d+m$:} 
 Since $b<m\le c$, it follows from \eqref{eq:cotas_mu} that
\begin{align*}
\mu& = b+c-2p, \;\; 0\leq p\leq b\\
\mu& = a+d-2p', \;\; 0\leq p'\leq \min\{a,d\}.
\end{align*}
This implies $p-p'=b$ and hence the only option is 
$p=b$, $p'=0$ and this yields 
\[
\mu=a+d=c-b.
\] 
We compute now 
$e_sv_0^{a,d,\mu}$.
It follows from \eqref{eq.imasj} and \eqref{eq.Coef_no_nulo_ad} that
\begin{align*}
k&=-i\\
j&=s-i,
\end{align*}
and since $k\geq 0$, we have $k=i=0$ and $j=s$. 
Therefore, 
\eqref{eq:esvad} becomes (see also 
 \eqref{eq.extremoCG0} and \eqref{eq.extremoCG1})
 \begin{align*}
e_sv_0^{a,d,\mu}
& =
CG(\tfrac{a}{2},\tfrac{a}{2};\,
\tfrac{d}{2},\tfrac{d}{2}
\,|\,\tfrac{a+d}{2},\tfrac{a+d}{2})\;
CG(\tfrac{d+m}{2},\tfrac{d+m}{2}-s;\,
\tfrac{d}{2},-\tfrac{d}{2}
\,|\,\tfrac{m}{2},\tfrac{m}{2}-s)\;\;
v_{0}^a\otimes v_{s}^{c}\\[3mm]
&=\sqrt{\tfrac{(d+m-s)!\;(m+1)!}{(d+m+1)!\;(m-s)!}}\;\;v_0^a\otimes v_s^c.
\end{align*}
As always, the reader should check that all the numbers under the factorial sign are non-negative.

We compute now 
$e_sv_0^{b,c,\mu}$.
It follows from \eqref{eq.imasj} and
\eqref{eq.Coef_no_nulo_bc} that
\begin{align*}
j&=s-i\\
k&=m-a-s+i
\end{align*}
and conditions $k\ge 0$ and $j\ge 0$ imply $s\geq i\geq s-(m-a)$. Thus, it follows from \eqref{eq:esvbc}, \eqref{eq.extremoCG11} and \eqref{eq.extremoCG0} that 
\begin{align*}
e_sv_0^{b,c,\mu}
& = \sum_{i=\max\{0,s-(m-a)\}}^{\min\{a,s\}}
CG(\tfrac{m-a}{2},-\tfrac{m-a}{2}+s-i;\,
\tfrac{d+m}{2},\tfrac{d+m}{2}-(s-i)
\,|\,\tfrac{a+d}{2},\tfrac{a+d}{2})  \\ 
&\hspace{3.1cm}\times
CG(\tfrac{a}{2},\tfrac{a}{2}-i;\,
\tfrac{m-a}{2},\tfrac{m-a}{2}-s+i
\,|\,\tfrac{m}{2},\tfrac{m}{2}-s)\;\;
v_{i}^a\otimes v_{s-i}^c \\[3mm]
&=\sum_{i=\max\{0,s-(m-a)\}}^{\min\{a,s\}}(-1)^{s-i}\sqrt{\tfrac{(d+m-s+i)!\;(m-a)!\;(a+d+1)}{(d+m+1)!\;(m-a-s+i)!}}\\
&\hspace{4.7cm}
\times \sqrt{\tfrac{a!\;(m-a)!\;(m-s)!\;s!}{i!\;(s-i)!\;m!\;(a-i)!\;(m-a-s+i)!}}\;\;v_{i}^a\otimes v_{s-i}^c
\end{align*}
Again, note that all the numbers under the factorial sign are non-negative.

For $s=m-a=b$, the sum giving $e_sv_0^{b,c,\mu}$ has the index $i$ running from $i=0$ to $i=\min\{a,b\}\neq 0$, while the sum giving $e_sv_0^{a,d,\mu}$ has the index $i$ running only up to $i=0$. In both cases, all the coefficients are non-zero, and thus $\{e_sv_0^{b,c,\mu},e_sv_0^{a,d,\mu}\}$ is linearly independent. This shows that there is no possible $\mu$ in $S_1$ and thus $S_1=0$. This completes this case and the proof of the theorem.

\section{COI Statement}
The authors have no conflict of interest to declare that are relevant to this article.

\bibliographystyle{plain}
\bibliography{bibliografia_tensor-uniserials}

\begin{thebibliography}{10}

\bibitem{Aizawa2012}
Naruhiko Aizawa, Phillip~S. Isaac, and Yuta Kimura.
\newblock {Highest weight representations and Kac determinants for a class of
  conformal Galilei algebras with central extension}.
\newblock {\em Int. J. Math.}, 23:1250118, 2012.

\bibitem{ASS}
I.~Assem, D.~Simson, and A.~Skowro\'nski.
\newblock {\em Elements of the Representation Theory of Associative Algebras:
  1. Techniques of the Representation Theory}.
\newblock Cambridge University Press, New York (USA), UK, 2007.

\bibitem{ARS}
M.~Auslander, I.~Reiten, and S.~O. Smal\o.
\newblock {\em Representation Theory of Artin Algebras}.
\newblock Cambridge University Press, New York (USA), Melbourne (Australia),
  1995.

\bibitem{BH-Z}
K.~Bongartz and B.~Huisgen-Zimmermann.
\newblock {The geometry of uniserial representations of algebras II. Alternate
  viewpoints and uniqueness}.
\newblock {\em J. Pure Appl. Algebra}, 157:23--32, 2001.

\bibitem{CGS2}
L.~Cagliero, L.~Guti\'errez Frez, and F.~Szechtman.
\newblock {Classification of finite dimensional uniserial representations of
  conformal Galilei algebras}.
\newblock {\em Journal of Mathematical Physics}, 57(101706), 2016.

\bibitem{CGS1}
L.~Cagliero, L.~Guti\'errez Frez, and F.~Szechtman.
\newblock {Free 2-step nilpotent Lie algebras and indecomposable modules}.
\newblock {\em Comm. Algebra}, 46:2990--3005, 2018.

\bibitem{CLS}
L.~Cagliero, F.~Levstein, and F.~Szechtman.
\newblock {Nilpotency degree of the nilradical of a solvable Lie algebra on two
  generators and uniserial modules associated to free nilpotent Lie algebras}.
\newblock {\em Journal of Algebra}, 585:447--483, 2021.

\bibitem{CGR}
L.~Cagliero and I.~G\'omez Rivera.
\newblock {Tensor products and intertwining operators for uniserial
  representations of the Lie algebras $\mathfrak{sl}(2)\ltimes V(m)$}.
\newblock {\em J. of Algebra}, 636:171--206, 2023.

\bibitem{CS_JofAlg}
L.~Cagliero and F.~Szechtman.
\newblock {The classification of uniserial $\sl(2)\ltimes V(m)$-modules and a
  new interpretation of the Racah-Wigner $6j$-symbol}.
\newblock {\em Journal Algebra}, 386:142--175, 2013.

\bibitem{CS_canadian}
L.~Cagliero and F.~Szechtman.
\newblock {On the theorem of the primitive element with applications to the
  representation theory of associative and Lie algebras}.
\newblock {\em Canad. Math. Bull.}, 57:735--748, 2014.

\bibitem{CS_JofAlgApp}
L.~Cagliero and F.~Szechtman.
\newblock {Classification of linked indecomposable modules of a family of
  solvable Lie algebras over an arbitrary field of characteristic 0}.
\newblock {\em J. of Algebra and Its Applications}, 15(1650029), 2016.

\bibitem{CS_Comm}
L.~Cagliero and F.~Szechtman.
\newblock {Indecomposable modules of 2-step solvable Lie algebras in arbitrary
  characteristic}.
\newblock {\em Comm. Algebra}, 44:1--10, 2016.

\bibitem{Ca2}
P.~Casati.
\newblock {Irreducible $\sl_{n+1}$-representations remain indecomposable
  restricted to some Abelian subalgebras}.
\newblock {\em Journal Lie Theory}, 20:393--407, 2010.

\bibitem{Ca1}
P.~Casati.
\newblock {The classification of the perfect cyclic $\mathfrak{sl}_{n+1}\ltimes
  \mathbb{C}^{n+1}$-modules}.
\newblock {\em Journal of Algebra}, 476:311--343, 2017.

\bibitem{CMS}
P.~Casati, S.~Minniti, and V.~Salari.
\newblock {Indecomposable representations of the Diamond Lie algebra}.
\newblock {\em Journal of Mathematical Physics}, 51(033515):20pp, 2010.

\bibitem{Casati2017IndecomposableMO}
P.~Casati, A.~Previtali, and F.~Szechtman.
\newblock Indecomposable modules of a family of solvable lie algebras.
\newblock {\em Linear Algebra and its Applications}, 531:423--446, 2017.

\bibitem{CM}
V.~Chari and A.~Moura.
\newblock {The restricted Kirillov-Reshetikhin modules for the current and
  twisted current algebras}.
\newblock {\em Commun. Math. Phys.}, 266:431--454, 2006.

\bibitem{Dd}
A.~Douglas and H.~de~Guise.
\newblock {Some nonunitary, indecomposable representations of the Euclidean
  algebra $\mathfrak{e}(3)$}.
\newblock {\em J. Phys. A: Math. Theor.}, 43(085204):13pp, 2010.

\bibitem{DKR}
A.~Douglas, D.~Kahrobaei, and J.~Repka.
\newblock {Classification of embeddings of abelian extensions of $D_n$ into
  $E_{n+1}$}.
\newblock {\em J. Pure Appl. Algebra}, 217:1942--1954, 2013.

\bibitem{DP}
A.~Douglas and A.~Premat.
\newblock {A class of nonunitary, finite dimensional representations of the
  euclidean algebra $\mathfrak{e}(2)$}.
\newblock {\em Communications in Algebra}, 35:1433--1448, 2007.

\bibitem{Finis2014}
T.~Finis.
\newblock Appendix to the paper ``{Some} uniserial representations of certain
  special linear groups'' by {P}. {Sin} and {J}. {G}. {Thompson}.
\newblock {\em J. Algebra}, 398:461--468, 2014.

\bibitem{GP}
I.M. Gelfand and V.A. Ponomarev.
\newblock {Remarks on the classification of a pair of commuting linear
  transformations in a finite dimensional vector space}.
\newblock {\em Functional Anal. Appl.}, 3:325--326, 1969.

\bibitem{H-Z}
B.~Huisgen-Zimmermann.
\newblock {The geometry of uniserial representations of finite dimensional
  algebras. III: Finite uniserial type}.
\newblock {\em Trans. Amer. Math. Soc.}, 348:4775--4812, 1996.

\bibitem{J}
H.~P. Jakobsen.
\newblock {\em Indecomposable finite-dimensional representations of a class of
  Lie algebras and Lie superalgebras}, volume 2027.
\newblock Lecture Notes in Math., Springer, Heidelberg, 2011.

\bibitem{Somberg2018}
Libor Křižka and Petr Somberg.
\newblock Conformal galilei algebras, symmetric polynomials and singular
  vectors.
\newblock {\em Letters in Mathematical Physics}, 108:1–44, 2018.

\bibitem{LU2014}
Rencai L\"u, Volodymyr Mazorchuk, and Kaiming Zhao.
\newblock On simple modules over conformal galilei algebras.
\newblock {\em Journal of Pure and Applied Algebra}, 218(10):1885--1899, 2014.

\bibitem{MASTEROV2024}
Ivan Masterov.
\newblock Towards $\ell$-conformal galilei algebra via contraction of the
  conformal group.
\newblock {\em Nuclear Physics B}, 998:116395, 2024.

\bibitem{MOROTTI_Rep_Theory}
L.~Morotti.
\newblock Irreducible tensor products of representations of covering groups of
  symmetric and alternating groups.
\newblock {\em Represent. Theory}, 25:543--593, 2021.

\bibitem{M}
A.~Moura.
\newblock {Restricted limits of minimal affinizations}.
\newblock {\em Pacific J. Math}, 244:359--397, 2010.

\bibitem{Na}
T.~Nakayama.
\newblock {On Frobeniusean algebras II}.
\newblock {\em Ann. of Math.}, 42:1--21, 1941.

\bibitem{NGB}
Z.~Nazemian, A.~Ghorbani, and M.~Behboodi.
\newblock {Uniserial dimension of modules}.
\newblock {\em J. Algebra}, 399:894--903, 2014.

\bibitem{Pi1}
A.~Piard.
\newblock {Sur des repr\'esentations ind\'ecomposables de dimension finie de
  $SL(2).R^2$}.
\newblock {\em Journal of Geometry and Physics}, 3:1--53, 1986.

\bibitem{Pu}
G.~Puninski.
\newblock {\em Serial rings}.
\newblock Kluwer Academic Publishers, Dordrecht, 2001.

\bibitem{Sa}
A.~Savage.
\newblock {Quivers and the Euclidean group}.
\newblock {\em Contemporary Mathematics}, 478:177--188, 2009.

\bibitem{SinThompson2014}
P.~Sin and J.~G. Thompson.
\newblock Some uniserial representations of certain special linear groups.
\newblock {\em J. Algebra}, 398:448--460, 2014.

\bibitem{VMK}
D.~A. Varshalovich, A.~N. Moskalev, and V.~K. Khersonskii.
\newblock {\em Quantum theory of angular momentum}.
\newblock World Scientific, Singapore, 1989.

\end{thebibliography}

\end{document}